\numberwithin{equation}{section}
\newtheorem{thm}[equation]{Theorem}
\newtheorem{rem}[equation]{Remark}
\newtheorem{lemma}[equation]{Lemma}
\newtheorem{corollary}[equation]{Corollary}
\newtheorem{example}[equation]{Example}
\newcommand{\rea}{\mathbb{R}}
\begin{document}

\title[Eigenvalues of elliptic operators with density]
{Eigenvalues of elliptic operators with density}

\author{Bruno Colbois, Luigi Provenzano}

\address{Bruno Colbois, Laboratoire de Math\'ematiques, Universit\'e de Neuch\^atel, 13 Rue E. Argand, 2007 Neuch\^atel, Switzerland. {\it E-mail}: {\rm bruno.colbois@unine.ch}}

\address{Luigi Provenzano, EPFL, SB Institute of Mathematics, Station 8, CH-1015 Lausanne, Switzerland. {\it E-mail}: {\rm luigi.provenzano@epfl.ch}}



 \subjclass[2010]{ Primary: 35P15. Secondary: 35J40, 35P20.}
 
 \keywords{High order elliptic operators, eigenvalues, mass densities, eigenvalue bounds, Weyl eigenvalue asymptotics}
 
 \thanks{The second author is member of the Gruppo Nazionale per l'Analisi Matematica, la Probabilit\`a e le loro Applicazioni (GNAMPA) of the Istituto Nazionale di Alta Matematica (INdAM)}

\begin{abstract}
We consider eigenvalue problems for elliptic operators of arbitrary order $2m$ subject to Neumann boundary conditions on bounded domains of the Euclidean $N$-dimensional space. We study the dependence of the eigenvalues upon variations of mass density and in particular we discuss the existence and characterization of upper and lower bounds under both the condition that the total mass is fixed and the condition that the $L^{\frac{N}{2m}}$-norm of the density is fixed. We highlight that the interplay between the order of the operator and the space dimension plays a crucial role in the existence of eigenvalue bounds.
\end{abstract}
\maketitle


\section{Introduction}

We consider the eigenvalue problem
\begin{equation}\label{DENSITYPROBLEM}
(-\Delta)^m u =\mu\rho u
\end{equation}
on a connected bounded open subset $\Omega$ of $\mathbb R^N$, where $\rho$ is a positive function bounded away from zero and infinity and where we impose Neumann boundary conditions on $u$. Under suitable regularity assumptions on the boundary of $\Omega$ (e.g., if $\Omega$ has a Lipschitz boundary) it is standard to prove that problem \eqref{DENSITYPROBLEM} admits an increasing sequence of non-negative eigenvalues of finite multiplicity
$$
0=\mu_1[\rho]=\cdots=\mu_{d_{N,m}}[\rho]<\mu_{d_{N,m}+1}[\rho]\leq\cdots\leq\mu_j[\rho]\cdots\nearrow+\infty,
$$
where $d_{N,m}$ denotes the dimension of the space of polynomials of degree at most $m-1$ in $\mathbb R^N$. 

In this paper we will prove a few results on the dependence of the eigenvalues $\mu_j[\rho]$ upon variation of $\rho$. In particular we will consider the problem of finding upper bounds for $\mu_j[\rho]$ among all positive and bounded densities $\rho$ satisfying suitable constraints. We shall also consider the issue of lower bounds, which also presents some interesting features.

Keeping in mind important problems for the Laplace and the biharmonic operators in linear elasticity (see e.g., \cite{cohil}) we shall think of the weight $\rho$ as a mass density of the body $\Omega$ and we shall refer to the quantity $M=\int_{\Omega}\rho dx$ as to the total mass of $\Omega$. In fact, when $N=2$ the eigenvalues $\mu_j[\rho]$ describe the vibrations of a non-homogeneous membrane with free edge when $m=1$ (see e.g., \cite[\S\,9]{Henrot}) and of a non-homogeneous plate with free edge when $m=2$ (see \cite{buosoprovenzano,chasman}).

Relevant questions on the dependence of the eigenvalues $\mu_j[\rho]$ upon $\rho$ are whether it is possible to minimize or maximize the eigenvalues under the assumption that the total mass is fixed, or whether it is possible to have uniform upper or lower bounds for the eigenvalues (i.e., bounds which depend only on the total mass, the dimension and the eigenvalue index) under the same constraint, and which have the correct behavior in $j\in\mathbb N$ as described by the Weyl's asymptotic law. 

Most of the existing literature treats the case of the Laplace operator with Dirichlet boundary conditions. In particular, we recall the famous result of Krein \cite{krein} on the eigenvalues of the Dirichlet Laplacian in one dimension (fixed string) which completely answers the questions raised above. In fact he finds sharp upper and lower bounds which depend only on $j,M,H,l$ for all the eigenvalues of the Laplacian on the string $]0,l[$ upon densities $0\leq\rho\leq H$ for which $M=\int_0^l\rho dx$ is fixed (see Remark \ref{remKR}). We refer also to the extensive work of Banks and collaborators for generalizations and extensions of Krein's results (see \cite{banks5,banks4,banks3,banks2,banks1} and the references therein). We mention also \cite[\S\,5]{egorov} which contains a detailed analysis of the eigenvalues of Sturm-Liouville problems with Dirichlet conditions with density (and also other types of weight). In particular, in \cite[\S\,5]{egorov}, the authors provide estimates (upper and lower bounds) under various type of linear and non-linear constraints on the weights. Existence of  minimizers and maximizers under mass constraint in higher dimensions for the Dirichlet Laplacian has been investigated in \cite{chanillo,cox0,cox1,cox2,friedland1}, where the authors impose the additional constraint that admissible densities are uniformly bounded from below and above by some fixed constants. We refer to \cite[\S\,9]{Henrot} and to the references therein for further discussions on eigenvalue problems for inhomogeneous strings and membranes with fixed edges.

As for Neumann boundary conditions, much less is known. Very recently the problem of finding uniform upper bounds for the Neumann eigenvalues of the Laplacian with weights has been solved (for $N\geq 2$) by Colbois and El Soufi \cite{colbois_elsoufi} in the more general context of Riemannian manifolds, by exploiting a general result of decomposition of a metric measure space by annuli (see \cite{gny}, see also \cite{korevaar}). The authors have not considered the case $N=1$ and, in fact, as we shall see in the present paper, upper bounds with mass constraint do not exists in dimension one. 

There are very few results for what concerns higher order operators. We recall \cite{banksrods,schwarzrods}, where the authors consider the case of the biharmonic operator in one dimension with intermediate boundary conditions (hinged rod) and \cite{beesackrods}, where the author considers the case of the biharmonic operator with Dirichlet conditions in dimension one (clamped rod) and two (clamped plate). We also refer to \cite[\S\,7.9]{egorov} where it is possible to find some estimates for the eigenvalues of elliptic operators of order $2m$ with density subject to Dirichlet boundary conditions. We refer again to \cite[\S\,11]{Henrot} for a more detailed discussion on eigenvalue problems for inhomogeneous rods and plates with hinged and clamped edges. Up to our knowledge, there are no results in the literature on the existence and characterization of upper and lower bounds with respect to mass densities for higher order operators subject to Neumann boundary conditions (already for the biharmonic operator or the Laplacian in dimension one).

Finally, we refer to \cite{laproeurasian} where the authors prove continuity and differentiability results for the dependence of the eigenvalues of a quite wide class of higher order elliptic operators and homogeneous boundary conditions upon variation of the mass density and in most of the cases (except, again, that of Neumann boundary conditions), they establish a maximum principle for extremum problems related to mass density perturbations which preserve the total mass. We remark that in \cite{laproeurasian} partial results are obtained in the case of Neumann boundary conditions only for the Laplace operator.

In this paper we shall primarily address the issue of finding upper bounds for the eigenvalues $\mu_j[\rho]$ of the polyharmonic operators with Neumann boundary conditions which are consistent with the power of $j$ in the Weyl's asymptotic formula (see \eqref{weyl}), among all densities which satisfy a suitable constraint. In particular, we consider two very natural constraints: $\int_{\Omega}\rho dx={\rm const.}$ and $\int_{\Omega}\rho^{\frac{N}{2m}}dx={\rm const.}$. This second constraint arises naturally since it is well-known (see e.g., \cite{lapidus}) that if we set $N(\mu):=\left\{\#\mu_j:\mu_j<\mu\right\}$, then $N(\mu)\sim\omega_N(2\pi)^{\frac{1}{N}}\mu^{\frac{N}{2m}}\int_{\Omega}\rho^{\frac{N}{2m}}dx$. This means that $\|\rho\|_{L^{\frac{N}{2m}}(\Omega)}$ describes the asymptotic distribution of the eigenvalues of problem \eqref{DENSITYPROBLEM} (and in particular implies the Weyl's law \eqref{weyl}). Most of the literature mentioned above considers only the fixed mass constraint. 

In view of the physical interpretation of problem \eqref{DENSITYPROBLEM} when $m=1$ and $N=1$ or $N=2$, it is very natural to ask whether it is possible to redistribute a fixed amount of mass on a string (of fixed length) or on a membrane (of fixed shape) such that all the eigenvalues become arbitrarily large when the body is left free to move, or, on the contrary, if there exists uniform upper bounds for all the eigenvalues. As highlithed in \cite{colbois_elsoufi}, uniform upper bounds with mass constraint exist if $N\geq 2$. In this paper, by using the techniques of \cite{colbois_elsoufi} we prove that if $N\geq 2m$, uniform upper bounds exist (see Theorem \ref{thm_upperNge4}), namely we prove that if $N\geq 2m$
\begin{equation}\label{SOLVED}
\mu_j[\rho]\leq C_{N,m}\frac{|\Omega|}{\int_{\Omega}\rho dx}\left(\frac{j}{|\Omega|}\right)^{\frac{2m}{N}},
\end{equation}
where $C_{N,m}$ depends only on $m$ and $N$. Surprisingly, in lower dimensions, uniform upper bounds do not hold. In fact we find explicit examples of densities with fixed mass and arbitrarily large eigenvalues (see Theorem \ref{counter23}). In this case, however, we are able to find upper bounds which depend also on $\|\rho\|_{L^{\infty}(\Omega)}$ (see Theorem \ref{sharpbounds_mass}), namely we prove that if $N<2m$
\begin{equation}\label{SOLVED2}
\mu_j[\rho]\leq C_{N,m}\frac{\|\rho\|_{L^{\infty}(\Omega)}^{\frac{2m}{N}-1}}{\left(\int_{\Omega}\rho dx\right)^{\frac{2m}{N}}}j^{\frac{2m}{N}},
\end{equation}
where again $C_{N,m}$ depends only on $m$ and $N$ and  the exponent of $\|\rho\|_{L^{\infty}(\Omega)}$ is sharp. We remark that this inequality holds when $m=1$ for $N=1$, and it is the analogue of the upper bounds \eqref{KR} proved by Krein \cite{krein} for the Dirichlet Laplacian on an interval (up to a universal constant). We note that in order to prove that  certain eigenvalue bounds under some natural constraints do not hold, one has to provide counterexamples. It is then natural to ask whether it is possible to find `weaker' bounds which include the correct quantities that explain  the counterexamples. This is the case of the bounds \eqref{SOLVED2}.

We note that the interplay between the dimension of the space and the order of the operator plays a crucial role in the existence of uniform upper bounds for the eigenvalues of problem \eqref{DENSITYPROBLEM} under mass constraint. We can summarize our first result in this way: 

 {\it ``If $N\geq 2m$ there exist uniform upper bounds with mass constraint for all the eigenvalues of \eqref{DENSITYPROBLEM}, while if $N<2m$ we can always redistribute a fixed amount of mass such that all the eigenvalues of \eqref{DENSITYPROBLEM} become arbitrarily large''}.

As for the the non-linear constraint $\int_{\Omega}\rho^{\frac{N}{2m}}dx={\rm const.}$, in view of the fact that $N(\mu)\sim\omega_N(2\pi)^{\frac{1}{N}}\mu^{\frac{N}{2m}}\int_{\Omega}\rho^{\frac{N}{2m}}dx$, it is natural to ask whether upper bounds of the form

\begin{equation}\label{OPEN}
\mu_j[\rho]\leq C_{N,m}\left(\frac{j}{\int_{\Omega}\rho^{\frac{N}{2m}}dx}\right)^{\frac{2m}{N}}
\end{equation}
hold. We will call bounds of the form \eqref{OPEN}  ``Weyl-type bounds''. Clearly, for $N=2m$ inequality \eqref{OPEN} is equivalent to \eqref{SOLVED}. For $N>2m$ we are able to find densities with fixed $L^{\frac{N}{2m}}$-norm and which produce arbitrarily large eigenvalues (see Theorem \ref{counterweyl23}). However, we are able to prove upper bounds for all the eigenvalues which involve both $\|\rho\|_{L^{\frac{N}{2m}}(\Omega)}$ and $\|\rho\|_{L^{\infty}(\Omega)}$ (see Theorem \ref{sharpbound_weyl}), namely we prove that if $N>2m$ then
\begin{equation}\label{PARTIAL}
\mu_j[\rho]\leq C_{N,m}\left(\frac{|\Omega|\|\rho\|_{L^{\infty}(\Omega)}^{\frac{N}{2m}}}{\int_{\Omega}\rho^{\frac{N}{2m}}dx}\right)^{1-\frac{2m}{N}}\left(\frac{j}{\int_{\Omega}\rho^{\frac{N}{2m}}dx}\right)^{\frac{2m}{N}},
\end{equation}
where $C_{N,m}$ depends only on $m$ and $N$ and the exponent of $\|\rho\|_{L^{\infty}(\Omega)}$ is sharp. Since \eqref{OPEN} holds for $N=2m$ we are led to conjecture that it must hold for any $N<2m$. We are still not able to prove \eqref{OPEN} for $N<2m$, and actually it seems to be a quite difficult issue. However we can prove the weaker inequality
\begin{equation}\label{PARTIAL2}
\mu_j[\rho]\leq C_{N,m}\left(\frac{|\Omega|\|\rho\|_{L^{\infty}(\Omega)}^{\frac{N}{2m}}}{\int_{\Omega}\rho^{\frac{N}{2m}}dx}\right)^{\frac{2m}{N}-1}\left(\frac{j}{\int_{\Omega}\rho^{\frac{N}{2m}}dx}\right)^{\frac{2m}{N}}.
\end{equation}
We leave the validity of \eqref{OPEN} for $N<2m$ as an open question. We refer to Remark \ref{openquestion} where we discuss relevant examples in support of the validity of our conjecture. In particular, we note that if \eqref{OPEN} holds true for $N<2m$, when $m=1$ and $N=1$ we would find uniform upper bounds for the eigenvalues of the Neumann Laplacian under the constraint that $\int_{\Omega}\sqrt{\rho}dx={\rm const.}$. We can summarize our second result as follows:

 {\it ``If $N>2m$, we can always find a density $\rho$ with fixed $\|\rho\|_{L^{\frac{N}{2m}}(\Omega)}$ such that all the eigenvalues of \eqref{DENSITYPROBLEM} are arbitrarily large, while we have uniform Weyl-type upper bounds when $N=2m$. We conjecture the existence of uniform Weyl-type upper bounds when $N<2m$''}.

We also mention \cite[\S\,9.2.3]{Henrot} where it is considered a spectral optimization problem for the Dirichlet Laplacian with the non-linear constraint $\int_{\Omega}\rho^p dx={\rm const.}$, where $p>N/2$ and $N\geq 2$ (see also \cite[\S\,5]{egorov}).


We have also considered the issue of lower bounds and we have found that `surprisingly' the interplay between the space dimension $N$ and the order $m$ of the operator plays a fundamental role also in the existence of lower bounds. In fact we are able to prove the following facts (see Theorems \ref{lowerbound_mass},\ref{lowerbound_counter_mass},\ref{lowerbound_weyl} and \ref{lowerbound_counter_weyl}):

{\it ``If $N<2m$ there exists a positive constant $C$ which depends only on $m,N$ and $\Omega$ such that the first positive eigenvalue of problem \eqref{DENSITYPROBLEM}  is bounded from below by $C\left(\int_{\Omega}\rho dx\right)^{-1}$, while if $N\geq 2m$, for all $j\in\mathbb N$ we can always redistribute a fixed amount of mass such that the first $j$ eigenvalues of \eqref{DENSITYPROBLEM} are arbitrarily close to zero''}

\noindent and

{\it ``If $N>2m$ there exists a positive constant $C$ which depends only on $m,N$ and $\Omega$ such that the first positive eigenvalue of problem \eqref{DENSITYPROBLEM}  is bounded from below by $C\|\rho\|_{L^{\frac{N}{2m}}(\Omega)}^{-1}$, while if $N\leq 2m$ for all $j\in\mathbb N$ we can always find densities with fixed $L^{\frac{N}{2m}}$-norm such that the first $j$ eigenvalues of \eqref{DENSITYPROBLEM} are arbitrarily close to zero''}.

We note that lower bounds for the first eigenvalue under one of the two constraints exist in the case that upper bounds with the same constraint do not exist. We remark that the situation is very different if we consider for example the issue of the minimization of the eigenvalues of \eqref{DENSITYPROBLEM} with $\rho\equiv 1$ among all bounded domains with fixed measure: it is standard to prove that there exist domains with fixed volume and such that the first $j$ eigenvalues can be made arbitrarily close to zero, in any dimension $N\geq 2$. 

Finally we remark that all the results of this paper can be adapted to the more general eigenvalue problem
$$
\mathcal L u=\mu\rho u,
$$
with Neumann boundary conditions, where $\mathcal L$ is defined by
$$
\mathcal Lu:=\sum_{0\leq |\alpha|,|\beta|\leq m}(-1)^{|\alpha|}\partial^{\alpha}(A_{\alpha\beta}\partial^{\beta}u)
$$
and is an elliptic operator of order $2m$, under suitable assumptions on the domain $\Omega$ and the coefficients of $A_{\alpha\beta}$. We refer to \cite{laproeurasian} for a detailed description of eigenvalue problems for higher order elliptic operators with density (see also \cite[\S\,7]{egorov}).

The present paper is organized as follows: Section \ref{preliminaries} is dedicated to some preliminaries. In Section \ref{MASS} we consider the problem of finding uniform upper bounds with mass constraint. In particular in Subsection \ref{sub:31} we prove uniform upper bounds \eqref{SOLVED} for $N\geq 2m$, in Subsection \ref{sub:33} we prove upper bounds \eqref{SOLVED2} for $N<2m$ and in Subsection \ref{sub:32} we provide counterexamples to uniform upper bounds in dimension $N<2m$. In Section \ref{WE} we investigate the existence of upper bounds with the non-linear constraint $\int_{\Omega}\rho^{\frac{N}{2m}}dx={\rm const.}$. In particular in Subsections \ref{sec:41}  and \ref{sub:43} we prove upper bounds \eqref{PARTIAL2} and \eqref{PARTIAL}, respectively, while in Subsection \ref{sec:42} we provide counterexamples to uniform upper bounds \eqref{OPEN} for $N>2m$. In Subsection \ref{sec:41} we state the open question whether bounds of the form \eqref{OPEN} hold if $N<2m$. In Section \ref{sec:lower} we consider lower bounds and in particular we discuss how the constraint, the space dimension and the order of the operator influence their existence. At the end of the paper we have two appendices, Appendix A and Appendix B. In Appendix A we discuss Neumann boundary conditions for higher order operators and develop some basic spectral theory for such operators. In Appendix B we prove some useful functional inequalities which are crucial in the proof of Theorem \ref{counter23} in Subsection \ref{sub:32}.


\section{Preliminaries and notation}\label{preliminaries}

Let $\Omega$ be a bounded domain (i.e., an open connected bounded set) of $\mathbb R^N$. By $H^m(\Omega)$ we shall denote the standard  Sobolev space of functions in $L^2(\Omega)$ with weak derivatives up to order $m$ in $L^2(\Omega)$, endowed with its standard norm defined by
\begin{equation*}
\|u\|_{H^m(\Omega)}:=\left(\int_{\Omega}|D^mu|^2+u^2 dx\right)^\frac{1}{2}
\end{equation*}
for all $u\in H^m(\Omega)$, where 
$$
|D^mu|^2:=\sum_{\substack{\alpha\in\mathbb N^N,\\|\alpha|= m}}|\partial^{\alpha}u|^2.
$$ 

In what follow we will use the standard multi-index notation. Hence, for $\alpha\in\mathbb N^N$, $\alpha=(\alpha_1,...,\alpha_N)$, we shall denote by $|\alpha|$ the quantity $|\alpha|=\alpha_1+\cdots+\alpha_N$. Moreover, for $\alpha,\beta\in\mathbb N^N$, $\alpha+\beta=(\alpha_1+\beta_1,...,\alpha_N+\beta_N)$ and $\alpha!=\alpha_1!\cdots\alpha_N!$. For $x\in\mathbb R^N$, $x=(x_1,...,x_N)$, we will write $x^{\alpha}=x_1^{\alpha_1}\cdots x_N^{\alpha_N}$. For a function $u$ of class $C^{|\alpha|}$, we write $\partial^{\alpha}u=\frac{\partial^{|\alpha|}u}{\partial_{x_1}^{\alpha_1}\cdots\partial_{x_N}^{\alpha_N}}$. Finally, for a function $U:\mathbb R\rightarrow\mathbb R$ and $l\in\mathbb N$, we shall write $U^{(l)}(x)$ to denote the $l$-th derivative of $U$ with respect to $x$.

In the sequel we shall assume that the domain $\Omega$ is such that the embedding of $H^m(\Omega)$ into $L^2(\Omega)$ is compact (which is ensured, for example, if $\Omega$ is a bounded domain with Lipschitz boundary). By $\mathcal R$ we shall denote the subset of $L^{\infty}(\Omega)$ of those functions $\rho\in L^{\infty}(\Omega)$ such that ${\rm ess}\inf_{\Omega}\rho>0$. 

We shall consider the following eigenvalue problem:
\begin{equation}\label{nweak}
\int_{\Omega}D^m u:D^m \varphi dx=\mu\int_{\Omega}\rho u \varphi dx\,,\ \ \ \forall\varphi\in H^m(\Omega)
\end{equation}
in the unknowns $u\in H^m(\Omega)$ (the eigenfunction), $\mu\in\mathbb R$ (the eigenvalue), where 
$$
D^m u:D^m\varphi:=\sum_{|\alpha|=m}{\partial^{\alpha} u}{\partial^{\alpha}\varphi}.
$$

We note that problem \eqref{nweak} is the weak formulation of the following eigenvalue problem:
\begin{equation}\label{nclassic}
\begin{cases}
(-\Delta)^m u=\mu\rho u,&{\rm in\ }\Omega,\\
\mathcal N_0 u =\cdots=\mathcal N_{m-1} u =0,&{\rm on\ }\partial\Omega,
\end{cases}
\end{equation}
in the unknowns $u\in C^{2m}(\Omega)\cap C^{2m-1}(\overline\Omega)$ and $\mu\in\mathbb R$. Here $\mathcal N_j$ are uniquely defined `complementing' boundary operators of degree at most $2m-1$ (see \cite{gazzola} for details), which we will call {\it Neumann boundary conditions} (see Appendix \ref{app:neumann}).

\begin{example}
If $m=1$, $\mathcal N_0u=\frac{\partial u}{\partial\nu}$ and \eqref{nclassic} is the classical formulation of the Neumann eigenvalue problem for the Laplace operator, namely
\begin{equation}\label{nclassic1}
\begin{cases}
-\Delta u=\mu\rho u,&{\rm in\ }\Omega,\\
\frac{\partial u}{\partial\nu}=0,&{\rm on\ }\partial\Omega,
\end{cases}
\end{equation}
in the unknowns $u\in C^{2}(\Omega)\cap C^{1}(\overline\Omega)$ and $\mu\in\mathbb R$, while if $m=2$ we have the Neumann eigenvalue problem for the biharmonic operator, namely
\begin{equation}\label{nclassic2}
\begin{cases}
\Delta^2u=\mu\rho u,&{\rm in\ }\Omega,\\
\frac{\partial^2u}{\partial\nu^2}=0,&{\rm on\ }\partial\Omega,\\
{\rm div}_{\partial\Omega}(D^2u\cdot\nu)+\frac{\partial\Delta u}{\partial\nu}=0,&{\rm on\ }\partial\Omega,
\end{cases}
\end{equation}
in the unknowns $u\in C^{4}(\Omega)\cap C^{3}(\overline\Omega)$ and $\mu\in\mathbb R$. Here ${\rm div}_{\partial\Omega}$ denotes the tangential divergence operator on $\partial\Omega$ (we refer to \cite[\S\,7]{shapes} for more details on tangential operators). 
\end{example}

In Appendix \ref{app:neumann} we discuss in more detail boundary conditions for problems \eqref{nclassic} and \eqref{nclassic2} and, more in general, Neumann boundary conditions for the polyharmonic operators.


It is standard to prove (see Theorem \ref{teorema_neumann_883}) that the eigenvalues of \eqref{nweak} are non-negative, have finite multiplicity and consist of a sequence diverging to $+\infty$ of the form
$$
0=\mu_1[\rho]=\cdots=\mu_{d_{N,m}}[\rho]<\mu_{d_{N,m}+1}[\rho]\leq\cdots\leq\mu_j[\rho]\leq\cdots\nearrow+\infty,
$$
where
$$
d_{N,m}:=\binom{N+m-1}{N}.
$$
The eigenfunctions associated with the eigenvalue $\mu=0$ are the polynomials of degree at most $m-1$ in $\mathbb R^N$ (the dimension of the space spanned by the polynomials of degree at most $m-1$ in $\mathbb R^N$ is exactly $d_{N,m}$). We note that we have highlithed the dependence of the eigenvalues upon the density $\rho$, which is the main object of study of the present paper.

By  standard spectral theory, we deduce the validity of the following variational representation of the eigenvalues (see \cite[\S\,IV]{cohil} for more details):

\begin{thm}
Let $\Omega$ be a bounded domain in $\mathbb R^N$ such that the embedding $H^m(\Omega)\subset L^2(\Omega)$ is compact. Let $\rho\in \mathcal R$. Then for all $j\in\mathbb N$ we have
\begin{equation}\label{minimax212}
\mu_{j}[\rho]=\inf_{\substack{V\leq H^m(\Omega)\\{\rm dim}V=j}}\sup_{\substack{0\ne u\in V}}\frac{\int_{\Omega}|D^mu|^2dx}{\int_{\Omega}\rho u^2dx}.
\end{equation}
\end{thm}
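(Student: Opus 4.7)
The plan is to derive \eqref{minimax212} directly from the spectral decomposition of problem \eqref{nweak} established in Theorem \ref{teorema_neumann_883} of Appendix A. That theorem produces a sequence $\{e_k\}_{k\in\mathbb N}\subset H^m(\Omega)$ of eigenfunctions corresponding to the ordered eigenvalues $\mu_k[\rho]$, orthonormal with respect to the weighted inner product $\langle u,v\rangle_\rho:=\int_\Omega\rho uv\,dx$, and forming a Hilbert basis of $(L^2(\Omega),\langle\cdot,\cdot\rangle_\rho)$ (this being equivalent to the usual $L^2$ inner product since $\rho\in\mathcal R$). The underlying mechanism is standard: the coercive bilinear form $a_\rho(u,v):=\int_\Omega D^m u:D^m v\,dx+\int_\Omega\rho uv\,dx$ on $H^m(\Omega)$ can be inverted via Lax-Milgram and composed with the compact embedding $H^m(\Omega)\hookrightarrow L^2(\Omega)$, yielding a compact self-adjoint operator on $L^2(\Omega,\rho)$ whose spectrum encodes the eigenvalues of \eqref{nweak}.

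Next I would establish the two Parseval identities: for every $u\in H^m(\Omega)$, setting $c_k:=\langle u,e_k\rangle_\rho$,
\begin{equation*}
\int_\Omega\rho u^2\,dx=\sum_{k\in\mathbb N}c_k^2,\qquad \int_\Omega|D^m u|^2\,dx=\sum_{k\in\mathbb N}\mu_k[\rho]\,c_k^2.
\end{equation*}
The first is classical. The second follows from the orthogonality $\int_\Omega D^m e_k:D^m e_\ell\,dx=\mu_k[\rho]\delta_{k\ell}$, obtained by testing \eqref{nweak} against $\varphi=e_\ell$, combined with the fact that $\{e_k/(1+\mu_k[\rho])^{1/2}\}$ is a Hilbert basis of $H^m(\Omega)$ under the equivalent inner product $a_\rho$, which yields $a_\rho$-convergence of the Fourier partial sums of $u$. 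With both identities in hand, the classical Courant-Fischer argument closes the proof. For the upper bound in \eqref{minimax212} I would choose the test subspace $V:=\mathrm{span}\{e_1,\dots,e_j\}$: on $V\setminus\{0\}$ the Rayleigh quotient equals $(\sum_{k=1}^j\mu_k[\rho]c_k^2)/(\sum_{k=1}^j c_k^2)\leq\mu_j[\rho]$. For the lower bound, a dimension count forces any $j$-dimensional $V\leq H^m(\Omega)$ to intersect non-trivially the $\langle\cdot,\cdot\rangle_\rho$-orthogonal complement of $\mathrm{span}\{e_1,\dots,e_{j-1}\}$; any nonzero $u$ in the intersection expands as $u=\sum_{k\geq j}c_k e_k$ and has Rayleigh quotient $\geq\mu_j[\rho]$ by the Parseval identities.

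The main technical subtlety is the second Parseval identity, since the quadratic form $\int_\Omega|D^m u|^2\,dx$ has the non-trivial kernel of polynomials of degree at most $m-1$ and is therefore only a semi-norm on $H^m(\Omega)$. This is circumvented by performing all Fourier-theoretic arguments in the full Hilbert space structure $(H^m(\Omega),a_\rho)$ and then recovering the identity for $\int_\Omega|D^m u|^2\,dx$ by subtracting the $\rho$-weighted Parseval identity. Once this is in place, the remaining manipulations are routine operations on absolutely convergent series.
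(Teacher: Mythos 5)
Your proposal is correct and follows essentially the route the paper itself invokes (but does not write out), namely the ``standard spectral theory'' argument built on the compact self-adjoint operator and the Hilbert basis of eigenfunctions of Theorem \ref{teorema_neumann_883}: weighted and $a_\rho$-Parseval identities, the test space spanned by the first $j$ eigenfunctions for the upper bound, and the dimension-count/orthogonality argument for the lower bound. The handling of the semi-norm issue by working in $(H^m(\Omega),\langle\cdot,\cdot\rangle_{\rho})$ and subtracting the weighted identity is exactly the right fix, so nothing needs to be added.
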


We conclude this section by recalling the asymptotic behavior of the eigenvalues as $j\rightarrow +\infty$, which is described by the Weyl's law.

\begin{thm}\label{weylthm}
Let $\Omega$ be a bounded domain in $\mathbb R^N$ with Lipschitz boundary. Let $\rho\in \mathcal R$. Then
\begin{equation}\label{weyl}
\mu_j[\rho]\sim\frac{(2\pi)^{2m}}{\omega_N^{\frac{2m}{N}}}\left(\frac{j}{\int_{\Omega}\rho^{\frac{N}{2m}}}\right)^{\frac{2m}{N}}
\end{equation}
as $j\rightarrow +\infty$.
\end{thm}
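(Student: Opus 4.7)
The plan is to prove the equivalent statement for the counting function
$$
\mathcal N(\lambda;\rho):=\#\{j:\mu_j[\rho]<\lambda\}\sim \frac{\omega_N}{(2\pi)^N}\lambda^{N/(2m)}\int_\Omega \rho^{N/(2m)}\,dx\qquad\text{as }\lambda\to\infty,
$$
from which \eqref{weyl} follows by inversion. By the min-max formula \eqref{minimax212}, increasing $\rho$ pointwise decreases every $\mu_j[\rho]$, so approximating $\rho\in\mathcal R$ from above and below by continuous functions $\rho_\varepsilon^\pm\in\mathcal R$ with $\rho_\varepsilon^\pm\to\rho$ in $L^{N/(2m)}(\Omega)$ reduces the problem, via $\varepsilon\to 0$ at the end, to the case of continuous $\rho$.

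Next I would set up a Dirichlet--Neumann bracketing on a cube grid. Fix $h>0$, tile $\mathbb R^N$ by open cubes $\{Q_k\}$ of side $h$, and put $\mathcal I(h):=\{k:Q_k\subset \Omega\}$, $\mathcal O(h):=\{k:Q_k\cap\Omega\neq\varnothing\}$, with $\rho_k^+:=\operatorname{ess\,sup}_{Q_k}\rho$ and $\rho_k^-:=\operatorname{ess\,inf}_{Q_k}\rho$. Since $\partial\Omega$ is Lipschitz, $|\mathcal O(h)\setminus\mathcal I(h)|=O(h^{1-N})$ and $\sum_{k\in\mathcal O\setminus\mathcal I}|Q_k|\to 0$ as $h\to 0$. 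On a single cube $Q$ of side $h$ with constant density $\rho_0$, separation of variables (sines for Dirichlet, cosines for Neumann) diagonalises $(-\Delta)^m$ with spectrum $\{\rho_0^{-1}(\pi|n|/h)^{2m}\}$ indexed over appropriate $n\in\mathbb N^N$, and the usual lattice-point count yields
$$
\mathcal N_Q^{\#}(\lambda;\rho_0)=\frac{\omega_N}{(2\pi)^N}h^N(\rho_0\lambda)^{N/(2m)}+o(\lambda^{N/(2m)})
$$
for both $\#=D$ and $\#=N$.

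For the lower bracket on $\mathcal N(\lambda;\rho)$, the extension-by-zero embedding $\bigoplus_{k\in\mathcal I(h)}H_0^m(Q_k)\hookrightarrow H^m(\Omega)$ combined with $\rho\geq\rho_k^-$ and \eqref{minimax212} gives
$$
\mathcal N(\lambda;\rho)\geq \sum_{k\in\mathcal I(h)}\mathcal N_{Q_k}^D(\lambda;\rho_k^-).
$$
For the upper bracket, restriction maps $H^m(\Omega)$ into $\bigoplus_{k\in\mathcal O(h)}H^m(Q_k\cap\Omega)$, enlarging the form domain, so
$$
\mathcal N(\lambda;\rho)\leq \sum_{k\in\mathcal O(h)}\mathcal N^N_{Q_k\cap\Omega}(\lambda;\rho_k^+).
$$
For interior cubes this reduces to the explicit count. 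For boundary cubes, the Lipschitz hypothesis provides a uniformly bounded extension operator $H^m(Q_k\cap\Omega)\to H^m(Q_k)$ (Jones's extension for $m=1$; a Stein-type reflection/partition-of-unity construction for higher $m$), which majorises $\mathcal N^N_{Q_k\cap\Omega}(\lambda;\rho_k^+)$ by a constant multiple of the explicit count on $Q_k$. Since boundary cubes carry total volume $O(h)$, their combined contribution to $\mathcal N(\lambda;\rho)/\lambda^{N/(2m)}$ is $O(h)$.

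Dividing by $\lambda^{N/(2m)}$, letting $\lambda\to\infty$ first and $h\to 0$ second, the Riemann sums $\sum_k h^N(\rho_k^\pm)^{N/(2m)}$ converge to $\int_\Omega \rho^{N/(2m)}\,dx$ by the uniform continuity of the reduced $\rho$, yielding the desired asymptotics. The principal technical obstacle is precisely the boundary-cube estimate in the Neumann bracket: Neumann eigenvalues are not domain-monotone, so the Lipschitz regularity of $\partial\Omega$ is essential in order to obtain the uniformly bounded $H^m$-extension across all boundary pieces $Q_k\cap\Omega$ that controls this term.
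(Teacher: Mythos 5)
You should first note that the paper does not prove this theorem at all: it simply cites \cite{lapidus}, whose proof rests on Birman--Solomyak-type estimates for weighted spectral problems. Judged on its own, your bracketing scheme is the right classical skeleton, but it has a structural gap at the very first step. A general $\rho\in\mathcal R$ is merely bounded and measurable, and it is in general \emph{impossible} to trap it between continuous functions $\rho^-_\varepsilon\leq\rho\leq\rho^+_\varepsilon$ with $\int_\Omega\bigl((\rho^+_\varepsilon)^{\frac{N}{2m}}-(\rho^-_\varepsilon)^{\frac{N}{2m}}\bigr)dx\to 0$: take $\rho=1+\chi_E$ with $E\subset\Omega$ a compact set of positive measure and empty interior (a fat Cantor set); any continuous minorant is $\leq 1$ on the dense open set $\Omega\setminus E$, hence everywhere, so the gap in the integrals is at least $(2^{\frac{N}{2m}}-1)|E|$ for every $\varepsilon$. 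The same example defeats the fallback of skipping the reduction and bracketing directly with $\rho_k^{\pm}=\operatorname{ess\,sup}_{Q_k}\rho$, $\operatorname{ess\,inf}_{Q_k}\rho$: every cube charging $E$ also contains an open piece of $\Omega\setminus E$, so the upper and lower Riemann-type sums remain a fixed distance apart as $h\to 0$. As written, your argument establishes \eqref{weyl} only for densities that are a.e.\ continuous (Riemann integrable); to reach all of $\mathcal R$ one needs an additional quantitative ingredient --- a bound on how the counting function moves under an $L^p$-small change of the weight --- which is precisely the content of the Birman--Solomyak machinery behind the reference the paper gives. Monotonicity in $\rho$ alone only controls perturbations that are ordered pointwise, and the Rayleigh quotient only controls them in $L^\infty$ ratio, neither of which suffices here.

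Two further steps are not correct as stated, though they are repairable. First, products of sines (resp.\ cosines) do \emph{not} diagonalise the Dirichlet ($H^m_0(Q)$) and Neumann ($H^m(Q)$) forms of $(-\Delta)^m$ on a cube when $m\geq 2$: they solve the equation but violate the boundary conditions (for instance $\cos(nx)$ fails $u''=u'''=0$ at the endpoints of the free rod, and $\prod_i\sin(n_i\pi x_i/h)$ fails $\partial_\nu u=0$ for the clamped problem), so the one-term cube asymptotics with the constant $\omega_N(2\pi)^{-N}h^N(\rho_0\lambda)^{N/2m}$ must instead be imported from the classical unweighted Weyl law for polyharmonic forms rather than from an explicit spectrum. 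Second, for boundary cubes the sets $Q_k\cap\Omega$ need not be uniformly Lipschitz (a face of $Q_k$ can be nearly tangent to $\partial\Omega$, producing cusp-like slivers), so the existence of extension operators with norms bounded uniformly in $k$ and $h$ is not automatic; and even granting such an extension, passing from it to ``$\mathcal N^N_{Q_k\cap\Omega}(\lambda)\leq C\cdot(\text{cube count})$'' is not a one-line Rayleigh-quotient comparison, since extension increases both the numerator and the denominator --- the standard route is an estimate of the type $\mathcal N^N_U(\lambda)\leq C\bigl(1+\lambda^{N/2m}|U|\bigr)$ with $C$ depending on the extension norm, obtained via approximation numbers of the embedding $H^m\subset L^2$, or else treating the whole boundary collar as a single piece using the extension property of $\Omega$ itself. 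These two points are classical technology; the density-approximation gap in the first step is the essential missing idea.
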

We refer to \cite{lapidus} for a proof of Theorem \ref{weylthm}.


\section{Upper bounds with mass constraint}\label{MASS}

In this section we consider the problem of finding uniform upper bounds for the $j$-th eigenvalue $\mu_j[\rho]$ among all mass densities $\rho\in\mathcal R$ which preserve the mass (that is, among all $\rho\in\mathcal R$ such that $\int_{\Omega}\rho dx={\rm const.}$), and which show the correct growth in the power of $j$ with respect to the Weyl's law \eqref{weyl}. In particular, in Subsection \ref{sub:31} we prove that such bounds exist if $N\geq 2m$ (see Theorem \ref{thm_upperNge4}), while in Subsection \ref{sub:32} we will give counter-examples in dimension $N<2m$ (see Theorem \ref{counter23}). Moreover, in Subsection \ref{sub:33} we establish upper bounds in the case $N<2m$ which involve also a suitable power of $\|\rho\|_{L^{\infty}(\Omega)}$ (see Theorem \ref{sharpbounds_mass}) which turns out to be sharp.


\subsection{Uniform upper bounds with mass constraint for \texorpdfstring{$N\geq 2m$}{N>=2m}}\label{sub:31}

In this subsection we will prove the existence of uniform upper bounds for $N\ge 2m$ with respect to mass preserving densities.

The main tool which we will use is a result of decomposition of a metric measure space by annuli (see \cite[Theorem\,1.1]{gny}). We recall it here for the reader's convenience.

Let $(X,d)$ be a metric space. By an annulus in $X$ we mean any set $A\subset X$ of the form
\begin{equation*}
A=A(a,r,R)=\left\{x\in X:r<d(x,a)<R\right\},
\end{equation*}
where $a\in X$ and $0\leq r<R<+\infty$. By $2A$ we denote 
\begin{equation*}
2A=2A(a,r,R)=\left\{x\in X:\frac{r}{2}<d(x,a)<2R\right\}.
\end{equation*}
We are ready to state the following theorem (see \cite[Theorem\,11]{gny}):
\begin{thm}\label{gny}
Let $(X,d)$ be a metric space and $\nu$ be a Radon measure on it. Assume that the following properties are satisfied:
\begin{enumerate}[i)]
\item there exists a constant $\Gamma$ such that any metric ball of radius $r$ can be covered by at most $\Gamma$ balls of radius $r/2$;
\item all metric balls in $X$ are precompact sets;
\item the measure $\nu$ is non-atomic.
\end{enumerate}
Then for any integer $j$ there exist a sequence $\left\{A_i\right\}_{i=1}^j$ of $j$ annuli in $X$ such that, for any $i=1,...,j$
\begin{equation*}
\nu(A_i)\geq c\frac{\nu(X)}{j},
\end{equation*}
and the annuli $2A_i$ are pairwise disjoint. The constant $c$ depends only on $\Gamma$.
\end{thm}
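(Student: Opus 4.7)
The plan is to produce the annuli by a recursive dichotomy construction in the spirit of Vitali-type covering arguments. First I would make a harmless reduction: by (ii) every metric ball has finite measure, so after replacing $X$ with a large precompact ball that carries essentially all of the mass and renormalising, one may assume $0 < \nu(X) < \infty$, and set a target mass $\tau := c\,\nu(X)/j$ for a constant $c = c(\Gamma)$ to be fixed at the end. The goal is to produce at least $j$ annuli of mass at least $\tau$ whose doublings are pairwise disjoint.

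The engine of the proof is the following dichotomy for a ball $B = B(a,R)$ of mass $m$. Fix a large scale factor $\kappa = \kappa(\Gamma)$, to be chosen. Either (a) $\nu(B(a,R/\kappa)) \le m/2$, in which case the annulus $A(a, R/\kappa, R)$ carries mass $\ge m/2$ and its doubling $A(a, R/(2\kappa), 2R)$ is still contained in $B(a,2R)$, so it can safely be ``charged'' to $B$; or (b) most of the mass lies in the concentric ball $B(a,R/\kappa)$, and one must zoom in. In case (b), apply (i) iteratively to cover $B(a,R/\kappa)$ by at most $N_0 = N_0(\Gamma,\kappa)$ smaller balls, and recurse on each child. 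Organising this as a rooted tree, each leaf produces one candidate annulus, each internal node branches into at most $N_0$ children, and every recursive step in case (b) shrinks the radius by a factor $\kappa$. The recursion is pruned along any branch whose current ball has mass below $\tau$. A Vitali-type selection at each level discards overlapping candidates so that the surviving doubled annuli are mutually disjoint; the non-atomicity hypothesis (iii) ensures that the dichotomy eventually falls into case (a) and that mass cannot concentrate at a point to stall the process.

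It remains to count: the total discarded mass is at most (number of pruned leaves)$\times \tau$, which is controlled by a power of $N_0$ times $\tau$. Choosing $c = c(\Gamma)$ sufficiently small forces at least $j$ leaves to survive, each contributing an annulus of mass $\ge \tau/2$, and I would absorb the factor $1/2$ into $c$. The hard part is the simultaneous calibration of $\kappa$, $c$, and the Vitali losses at each level so that the doubling $2A_i$ survives both the radial expansion and the overlap-removal step with the lower bound $\nu(A_i) \ge c\,\nu(X)/j$ intact. This is precisely where the covering hypothesis (i) is essential: without a uniform bound on the packing number, the branching factor at each level would explode, the mass loss per level could not be made geometric, and one could no longer guarantee that $c$ depends only on $\Gamma$.
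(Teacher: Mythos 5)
First, a point of context: the paper does not prove this statement at all; it is quoted verbatim from Grigor'yan--Netrusov--Yau, and the proof there is a long and delicate argument (an induction on the number of annuli built around a careful analysis of where the measure concentrates), so your sketch should be judged on its own merits. On those merits, the dichotomy you set up (either a concentric annulus captures half the mass, or the mass concentrates and you zoom in) does capture the right flavor, but the three places where the sketch hand-waves are exactly where the theorem is hard, and as written each of them is a genuine gap. (1) Your mass accounting assumes the recursion tree has boundedly many nodes: you bound the discarded mass by ``(number of pruned leaves)$\times\tau$, controlled by a power of $N_0$''. But the depth of the recursion is not bounded in terms of $\Gamma$ and $j$ alone --- non-atomicity only guarantees that each branch terminates, not uniformly --- so the number of leaves, and hence the loss from pruning and from the per-level Vitali selections, cannot be bounded by any constant depending only on $\Gamma$; losing a fixed fraction of mass at each of unboundedly many scales destroys the estimate. (2) A Vitali-type selection performed level by level only prevents overlaps among candidates of comparable scale at the same stage; it does not address doubled annuli coming from different branches and very different scales (e.g.\ a small annulus sitting inside the hole of a much larger one: after halving the inner radius and doubling the outer radius of both, disjointness is no longer automatic). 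Making the whole family $\{2A_i\}$ pairwise disjoint while retaining a definite fraction of the mass is the combinatorial core of the Grigor'yan--Netrusov--Yau proof and is simply asserted here.

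(3) Even granting the mass bookkeeping, the conclusion ``at least $j$ leaves survive'' does not follow. Your surviving annuli are only guaranteed to carry a large total mass, and a large total mass is compatible with the family consisting of a single annulus of mass comparable to $\nu(X)$ (for instance if the root ball already falls into case (a)). The theorem requires a \emph{count}: at least $j$ annuli, each of mass at least $c\,\nu(X)/j$. To force the count one must keep extracting annuli after the first ones are found, and then each new annulus must be chosen with its doubling disjoint from all previously selected doubled annuli --- which brings you back to difficulty (2). There are also smaller technical slips (an open annulus $A(a,r,R)$ omits the sphere $\{d(x,a)=r\}$, which non-atomicity does not make $\nu$-null, so radii must be adjusted; and the reduction to a single large ball needs $\nu(X)<\infty$, which the statement as recorded here does not assume), but the essential missing content is the simultaneous control of mass loss over unboundedly many scales, global disjointness of the doublings, and the lower bound on the number of annuli.
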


In the sequel we will need also the following corollary of Theorem \ref{gny} (see \cite[Remark\,3.13]{gny}):

\begin{corollary}\label{corollarygny0}
Let the assumptions of Theorem \ref{gny} hold. If in addition $0<\nu(X)<\infty$, each annulus $A_i$ has either internal radius $r_i$ such that
\begin{equation}\label{gny-rad-est}
r_i\geq\frac{1}{2}\inf\left\{r\in\mathbb R:V(r)\geq v_j\right\},
\end{equation}
where $V(r):=\sup_{x\in X}\nu(B(x,r))$ and $v_j=c\frac{\nu(X)}{j}$ , or is a ball of radius $r_i$ satisfying \eqref{gny-rad-est}.
\end{corollary}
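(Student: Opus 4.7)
The plan is to revisit the construction underlying Theorem \ref{gny} and track a lower bound on the inner radius of each produced annulus. Set
$$
r^* := \inf\{r \in \mathbb R : V(r) \geq v_j\},
$$
so that $V(r) \leq v_j$ for every $r < r^*$, and consequently $\nu(B(x,r)) \leq v_j$ for every $x \in X$ and every $r<r^*$. The goal is to argue that in the family $\{A_i\}_{i=1}^j$ of Theorem \ref{gny}, each set is either an annulus with inner radius $\geq r^*/2$ or a ball of radius $\geq r^*/2$.

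First I would control the outer radius for free. For any annulus $A_i = A(a_i, r_i, R_i)$ furnished by Theorem \ref{gny}, the inclusion $A_i \subset B(a_i, R_i)$ together with $\nu(A_i) \geq v_j$ forces
$$
V(R_i) \geq \nu(B(a_i,R_i)) \geq v_j,
$$
so $R_i \geq r^*$. Thus the outer radius of every produced annulus is automatically above the required threshold; the whole point is therefore to handle the inner radius.

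Next I would inspect the selection procedure used to prove Theorem \ref{gny}. That procedure picks annuli from a dyadic Whitney-type hierarchy, keeping only those whose doublings remain pairwise disjoint. I would modify the selection so that any candidate annulus whose inner radius would fall strictly below $r^*/2$ is replaced by the corresponding ball $B(a_i, R_i)$: its measure exceeds that of the discarded annulus, hence is still $\geq v_j$, and by the previous paragraph its radius satisfies the required lower bound $R_i \geq r^*$. The annuli that survive unchanged have, by construction, inner radius at least $r^*/2$. This dichotomy yields exactly the two alternatives stated in the corollary.

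The main obstacle is to confirm that the disjointness property $2A_i \cap 2A_k = \emptyset$ is preserved after the replacement. When an annulus $A_i = A(a_i, r_i, R_i)$ is promoted to the ball $B(a_i, R_i)$, its doubling acquires the additional piece $B(a_i, r_i/2)$ on top of $2A_i$, and we must ensure this piece does not intrude into any neighboring $2A_k$. This forces one to use the separation of the chosen centers $a_i$ built into the GNY selection at the relevant scale; since the replacement is triggered only when $r_i/2 < r^*/4$, and the construction ensures that any other center $a_k$ with $2A_k$ close to $a_i$ sits at a scale comparable to $R_i$, the extra inner ball remains within the already excluded region. This is the content of \cite[Remark 3.13]{gny}, and verifying it is the delicate part of the argument; once granted, the corollary follows at once.
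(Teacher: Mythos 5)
There is a genuine gap here. The statement you are asked to prove is precisely the content of \cite[Remark 3.13]{gny}; the paper itself does not prove it but simply quotes that remark, so a real proof would have to reproduce the relevant internal details of the Grigor'yan--Netrusov--Yau construction. Your sketch does not do this: at the decisive moment you write that the preservation of the disjointness of the doubled sets after promoting an annulus to a ball ``is the content of \cite[Remark 3.13]{gny}'' and take it for granted. Since that is exactly the statement you set out to establish, the argument is circular at its key step rather than a proof.

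Moreover, the structural claim you interpose before that citation is unjustified and is in fact where the whole difficulty lies. The statement of Theorem \ref{gny} only guarantees that the sets $2A_i$ are pairwise disjoint; it does not prevent another doubled annulus $2A_k$ from sitting entirely inside the hole $B(a_i,r_i/2)$ of $2A_i$, because that hole is disjoint from $2A_i$. If that happens, replacing $A_i=A(a_i,r_i,R_i)$ by the ball $B(a_i,R_i)$, whose doubling $B(a_i,2R_i)$ contains $B(a_i,r_i/2)$, destroys the disjointness. Your assertion that ``any other center $a_k$ with $2A_k$ close to $a_i$ sits at a scale comparable to $R_i$'' is exactly the kind of property one would have to extract from the inside of the GNY selection procedure and verify; it does not follow from the statement of Theorem \ref{gny}, and you give no argument for it. What is correct in your note is the easy part: from $\nu(A_i)\geq v_j$ and $A_i\subset B(a_i,R_i)$ you get $V(R_i)\geq v_j$, hence $R_i\geq\inf\left\{r:V(r)\geq v_j\right\}$, so the bound is automatic for the outer radius (and in particular for the case in which $A_i$ is itself a ball). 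But the corollary is about the inner radius, and for that your argument supplies no self-contained proof.
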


We are now ready to state the main result of this section.

\begin{thm}\label{thm_upperNge4}
Let $\Omega$ be a bounded domain in $\mathbb R^N$, $N\geq 2m$, such that the embedding $H^m(\Omega)\subset L^2(\Omega)$ is compact. Let $\rho\in\mathcal R$. Then for every $j\in\mathbb N$ we have
\begin{equation}\label{upperNge4}
\mu_j[\rho]\leq C_{N,m}\frac{1}{\int_{\Omega}\rho dx |\Omega|^{-1}}\left(\frac{j}{|\Omega|}\right)^{\frac{2m}{N}},
\end{equation}
where $C_{N,m}$ is a constant which depends only on $N$ and $m$.
\end{thm}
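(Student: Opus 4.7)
The plan is to combine the min-max principle \eqref{minimax212} with the Grigor'yan--Netrusov--Yau annulus decomposition (Theorem \ref{gny} and Corollary \ref{corollarygny0}), extending to higher order the strategy used by Colbois and El Soufi \cite{colbois_elsoufi} for the Laplacian. The goal is to construct $j$ test functions $\chi_1,\ldots,\chi_j\in H^m(\Omega)$ with pairwise disjoint supports satisfying
\begin{equation*}
\int_\Omega|D^m\chi_i|^2\,dx\leq C_{N,m}\,(|\Omega|/j)^{(N-2m)/N}
\qquad\text{and}\qquad
\int_\Omega\rho\,\chi_i^2\,dx\geq c\,M/j,
\end{equation*}
where $M=\int_\Omega\rho\,dx$. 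Disjointness of supports forces the Rayleigh quotient of any $u=\sum_ic_i\chi_i$ to be bounded by $\max_i$ of the individual ratios, hence by $C_{N,m}\,|\Omega|^{(N-2m)/N}j^{2m/N}/M$, so that \eqref{upperNge4} follows from \eqref{minimax212} applied to $V=\mathrm{span}\{\chi_i\}$.

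First I would apply Theorem \ref{gny} to the metric measure space $(\Omega,d_{\mathrm{Euc}},\rho\,dx)$, whose doubling constant $\Gamma$ depends only on $N$; balls are precompact and the measure is nonatomic. Invoking the theorem with $2j$ in place of $j$ produces $2j$ annuli $A_i\subset\Omega$ with $\int_{A_i}\rho\,dx\geq cM/j$ and pairwise disjoint dilates $\{2A_i\}$ in $\Omega$. Since $\sum_i|2A_i|\leq|\Omega|$, a pigeonhole argument lets me retain $j$ indices that also satisfy $|2A_i|\leq|\Omega|/j$. For each retained annulus $A_i=A(a_i,r_i,R_i)$ I would take a smooth radial cutoff $\chi_i\in C^\infty(\mathbb{R}^N)$ equal to $1$ on $A_i$, supported in $2A_i$, with $|D^k\chi_i|\leq C_k r_i^{-k}$ on the inner transition shell $\{r_i/2<|x-a_i|<r_i\}$ and $|D^k\chi_i|\leq C_k R_i^{-k}$ on the outer transition shell $\{R_i<|x-a_i|<2R_i\}$; the degenerate ball case from Corollary \ref{corollarygny0} is treated identically with only an outer shell.

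The denominator estimate $\int_\Omega\rho\chi_i^2\,dx\geq\int_{A_i}\rho\,dx\geq cM/j$ is immediate. For the numerator, letting $V_{\mathrm{in}}^i$ and $V_{\mathrm{out}}^i$ denote the Lebesgue measures of the two transition shells intersected with $\Omega$,
\begin{equation*}
\int_\Omega|D^m\chi_i|^2\,dx\leq C_{N,m}\bigl(r_i^{-2m}V_{\mathrm{in}}^i+R_i^{-2m}V_{\mathrm{out}}^i\bigr).
\end{equation*}
The crucial observation is that $V_{\mathrm{in}}^i\leq\min(C_N r_i^N,|2A_i|)$ and $V_{\mathrm{out}}^i\leq\min(C_N R_i^N,|2A_i|)$. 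Combined with $|2A_i|\leq|\Omega|/j$, a short case analysis according to whether $r_i^N$, respectively $R_i^N$, lies above or below $|\Omega|/j$ shows that each term is bounded by $C_{N,m}(|\Omega|/j)^{(N-2m)/N}$. I expect this last estimate to be the main obstacle: no a priori upper bound is available on the outer radius $R_i$, and the hypothesis $N\geq 2m$ is used here precisely to keep the exponent $(N-2m)/N$ nonnegative, so that both branches of the case analysis cooperate (the ``$R_i^N\leq|\Omega|/j$'' branch producing $R_i^{N-2m}$, the ``$R_i^N>|\Omega|/j$'' branch producing $|\Omega|/(jR_i^{2m})$, and both being controlled by the same quantity).
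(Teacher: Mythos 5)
Your proposal is correct and follows essentially the same route as the paper: the Grigor'yan--Netrusov--Yau decomposition of $(\Omega,\rho\,dx)$ into $2j$ annuli, the pigeonhole selection of $j$ of them with $|2A_i|\leq|\Omega|/j$, radial cutoffs equal to $1$ on $A_i$ and supported in $2A_i$ with disjoint supports, and the min-max principle reduced to the maximum of the individual Rayleigh quotients. The only difference is bookkeeping in the numerator: the paper applies H\"older's inequality to write $\int_{2A_i}|D^m u_i|^2\,dx\leq\bigl(\int_{2A_i}|D^m u_i|^{N/m}dx\bigr)^{2m/N}|2A_i|^{1-2m/N}$ with the first factor scale-invariant and bounded by $C_{N,m}$, whereas you reach the same bound $C_{N,m}(|\Omega|/j)^{1-2m/N}$ by the two-case comparison of $r_i^N$, $R_i^N$ with $|\Omega|/j$; both arguments use $N\geq 2m$ in exactly the same way.
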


\begin{rem}
Inequality \eqref{upperNge4} says that there exists a uniform upper bound for all the eigenvalues $\mu_j[\rho]$ with respect to those densities $\rho\in\mathcal R$ which give the same mass $M=\int_{\Omega}\rho dx$. We note that the quantity $\int_{\Omega}\rho dx |\Omega|^{-1}=M/|\Omega|$ is an average density, i.e., the total mass over the total volume of $\Omega$. Moreover, from \eqref{upperNge4} it follows that
\begin{equation*}
\mu_j[\rho]\leq C_{N,m}\left(\frac{j}{|\Omega|}\right)^{\frac{2m}{N}},
\end{equation*}
for all densities $\rho\in\mathcal R$ and bounded domains $\Omega$ (with $H^m(\Omega)\subset L^2(\Omega)$ compact) such that $\int_{\Omega}\rho dx=|\Omega|$.
\end{rem}

\begin{proof}[Proof of Theorem \ref{thm_upperNge4}]
The proof is based on the general method described by Grigor'yan, Netrusov and Yau in \cite{gny} (see Theorem \ref{gny}; see also \cite{colbois_elsoufi,colbois} for the case of the Laplace operator). In particular, we will build a suitable family of disjointly supported test functions with controlled Rayleigh quotient.

Let $0\leq r<R<+\infty$. Let $U_{r,R}:[0,+\infty[\rightarrow[0,1]$ be defined by
\begin{equation}\label{radialu}
U_{r,R}(t):=\begin{cases}
0 & {\rm if\ } t\in[0,r/2[\cup[2R,+\infty[,\\
\sum_{i=0}^{2m-1}\frac{a_i}{r^i} t^i & {\rm if\ } t\in[r/2,r[,\\
1 & {\rm if\ } t\in[r,R[,\\
\sum_{i=0}^{2m-1}\frac{b_i}{R^i} t^i & {\rm if\ } t\in[R,2R[.
\end{cases}
\end{equation}
The coefficients $\left\{a_i\right\}_{i=0}^{2m-1}$, $\left\{b_i\right\}_{i=0}^{2m-1}$ are uniquely determined by the equations
\begin{equation}\label{system}
\begin{cases}
U_{r,R}(r/2)=0, & \\
U_{r,R}(r)=1, &\\
U_{r,R}(R)=1,&\\
U_{r,R}(2R)=0,&\\
U_{r,R}^{(l)}(r)=U_{r,R}^{(l)}(r/2)=U_{r,R}^{(l)}(R)=U_{r,R}^{(l)}(2R)=0,& \forall l=1,...,m-1.
\end{cases}
\end{equation}
We note that \eqref{system} can be written as
\begin{equation}\label{system2}
\begin{cases}
\sum_{i=0}^{2m-1}\frac{a_i}{2^i}=0, & \\
\sum_{i=0}^{2m-1}{a_i}=1, &\\
\sum_{i=0}^{2m-1}{b_i}=1, & \\
\sum_{i=0}^{2m-1}{2^i b_i}=0, &\\
\sum_{i=l}^{2m-1}i(i-1)\cdots(i-l+1)2^{l-i}a_i=0,& \forall l=1,...,m-1,\\
\sum_{i=l}^{2m-1}i(i-1)\cdots(i-l+1)a_i=0,& \forall l=1,...,m-1,\\
\sum_{i=l}^{2m-1}i(i-1)\cdots(i-l+1)2^{i-l}b_i=0,& \forall l=1,...,m-1,\\
\sum_{i=l}^{2m-1}i(i-1)\cdots(i-l+1)b_i=0,& \forall l=1,...,m-1,
\end{cases}
\end{equation}
which is a system of $4m$ equations in $4m$ unknowns $a_0,...,a_{2m-1}$, $b_0,...,b_{2m-1}$. It is standard to see that \eqref{system2} admits an unique non-zero solution. Moreover, the coefficients $a_0,...,a_{2m-1}$, $b_0,...,b_{2m-1}$ depend only on $m$.

We note that by construction $U_{r,R}\in C^{m-1}([0,+\infty[)\cap C^{m-1,1}([0,2R])$. Let now $A=A(a,r,R)$ be an annulus in $\mathbb R^N$. We define a function $u_{a,r,R}$ supported on $2A$ and such that $u_{a,r,R}\leq 1$ on $2A$ and $u_{a,r,R}\equiv 1$ on $A$ by setting
\begin{equation}\label{radialu2}
u_{a,r,R}(x):=U_{r,R}(|x-a|).
\end{equation}
By construction, the restriction of this function to $\Omega$  belongs to the Sobolev space $H^m(\Omega)$. Now we exploit Theorem \ref{gny} with $X=\Omega$ endowed with the Euclidean distance, and the measure $\nu$ given by $\nu(E):=\int_{E} \rho dx$ for all measurable $E\subset\Omega$. The hypothesis of Theorem \ref{gny} are clearly satisfied. Hence, for each index $j\in\mathbb N$ we find $2j$ annuli $\left\{A_i\right\}_{i=1}^{2j}$ such that $2A_i$ are disjoint and  
\begin{equation*}
\int_{A_i\cap\Omega}\rho dx\geq c_N\frac{\int_{\Omega}\rho dx}{2j},
\end{equation*}
where $c_N>0$ depends only on $N$. Since we have $2j$ annuli $2A_i=2A_i(a_i,r_i,R_i)$, $i=1,...,2j$, we can choose $j$ of such annuli, say $\left\{2A_{i_1},...,2A_{i_j}\right\}$ such that 
\begin{equation}\label{choice}
|2A_{i_k}|\leq\frac{|\Omega|}{j}
\end{equation}
for all $k=1,...,j$. To each of such annuli, we associate a function $u_{i_k}$ defined by
\begin{equation}\label{radialu3}
u_{i_k}(x):=u_{a_{i_k},r_{i_k},R_{i_k}}(x).
\end{equation}
We have then built a family of $j$ disjointly supported functions, which we relabel as $u_1,...,u_j$, and whose restriction to $\Omega$ belong to the space $H^m(\Omega)$.

From the min-max principle \eqref{minimax212} it follows that
\begin{equation}\label{minmax1}
\mu_{j}[\rho]\leq\max_{\substack{0\ne u\in V_j}}\frac{\int_{\Omega}|D^mu|^2dx}{\int_{\Omega}\rho u^2dx},
\end{equation}
where $V_j$ is the subspace of $H^m(\Omega)$ generated by $u_1,...,u_j$ (and which has dimension $j$). Since the space is generated by $j$ disjointly supported functions, it is standard to prove that \eqref{minmax1} is equivalent to the following:
\begin{equation}\label{minmax2}
\mu_{j}[\rho]\leq\max_{i=1,...,j}\frac{\int_{\Omega}|D^m u_i|^2dx}{\int_{\Omega}\rho u_i^2dx}.
\end{equation}
This means that it is sufficient to have a control on the Rayleigh quotient of each of the generating functions $u_i$ in order to bound $\mu_j[\rho]$. It is standard to see that, if $u\in C^m(\Omega)$ is given by $u(x)=U(|x-a|)$, for some function $U$ of one real variable, then for all $\alpha\in\mathbb N^N$ with $|\alpha|=m$
\begin{equation}\label{m-der}
{\partial^{\alpha}u(x)}=\sum_{k=1}^m\frac{c_{N,k,\alpha}}{|x-a|^{m-k}}U^{(k)}(|x-a|),
\end{equation}
where $c_{N,k,\alpha}$ depends only on $N$, $k$ and $\alpha$. From \eqref{m-der} it follows then that there exists a constant $C_{N,m}>0$ which depends only on $m$ and $N$ such that
\begin{equation}\label{m-der-est}
|D^m u(x)|^2\leq C_{N,m}\sum_{k=1}^m\frac{(U^{(k)}(x-a))^2}{|x-a|^{2(m-k)}}.
\end{equation}
Through the rest of the proof we will denote by $C_{N,m}$ a positive constant which depends only on $m$ and $N$ and which can be eventually re-defined line by line.

By standard approximation of functions in $H^m(\Omega)$ by smooth functions (see \cite[\S\,5.3]{evans}), from \eqref{m-der-est} we deduce that if $u\in H^m(\Omega)$ is given by $u(x)=U(|x-a|)$, for some function $U$ of one real variable, then
\begin{equation}\label{hessianintegral}
\int_{\Omega}|D^mu|^{2p}dx\leq C_{N,m}\int_{\Omega}\sum_{k=1}^m\frac{(U^{(k)}(|x-a|))^{2p}}{|x-a|^{2p(m-k)}} dx,
\end{equation}
for all $p>0$. We are now ready to estimate the right-hand side of \eqref{minmax2}. For the denominator we have
\begin{equation}\label{den}
\int_{\Omega}\rho u_i^2dx=\int_{2A_i\cap\Omega}\rho u_i^2dx\geq\int_{A_i\cap\Omega}\rho u_i^2dx=\int_{A_i\cap\Omega}\rho dx\geq c_N\frac{\int_{\Omega}\rho dx}{2j}.
\end{equation}
This follows from the fact that $u_i\leq 1$ and $u_i\equiv 1$ on $A_i$ and from Theorem \ref{gny}. For the numerator, since $N\geq 2m$, we have
\begin{multline}\label{step1}
\int_{\Omega}|D^mu_i|^2dx=\int_{\Omega\cap 2A_i}|D^mu_i|^2dx\leq\int_{2A_i}|D^mu_i|^2dx\\
\leq\left(\int_{2A_i}|D^mu_i|^{N/m}dx\right)^{2m/N}|2A_i|^{1-2m/N}.
\end{multline}
From \eqref{choice}, we have that $|2A_i|^{1-2m/N}\leq\left({|\Omega|}/{j}\right)^{1-2m/N}$. Moreover, from \eqref{radialu}, \eqref{hessianintegral} and standard calculus we have that
\begin{equation}\label{step111}
\left(\int_{2A_i}|D^mu_i|^{N/2}dx\right)^{2m/N}\leq C_{N,m}.
\end{equation}
From \eqref{den}, \eqref{step1} and \eqref{step111} we have that
\begin{equation}\label{step222}
\frac{\int_{\Omega}|D^m u_i|^2dx}{\int_{\Omega}\rho u_i^2dx}\leq C_{N,m}\frac{1}{\int_{\Omega}\rho dx|\Omega|^{-1}}\left(\frac{j}{|\Omega|}\right)^{\frac{2m}{N}}
\end{equation}
for all $i=1,...,j$. From \eqref{minmax2} and \eqref{step222} we deduce the validity of \eqref{upperNge4}. This concludes the proof.
\end{proof}


\subsection{Upper bounds with mass constraint for \texorpdfstring{$N<2m$}{N<2m}}\label{sub:33}

We note that the proof of Theorem \ref{thm_upperNge4} does not work in the case $N<2m$. Indeed, as we will see in Subsection \ref{sub:32} (see Theorem \ref{counter23}), bounds of the form \eqref{upperNge4} do not hold if $N<2m$. In this subsection we prove upper bounds for the eigenvalues $\mu_j[\rho]$ which involve also a suitable power of $\|\rho\|_{L^{\infty}(\Omega)}$, namely, we prove the following theorem:

\begin{thm}\label{sharpbounds_mass}
Let $\Omega$ be a bounded domain in $\mathbb R^N$, $N<2m$, such that the embedding $H^m(\Omega)\subset L^2(\Omega)$ is compact. Let $\rho\in\mathcal R$. Then for every $j\in\mathbb N$ we have
\begin{equation}\label{sharp_bounds}
\mu_j[\rho]\leq C_{N,m} \frac{\|\rho\|_{L^{\infty}}^{\frac{2m}{N}-1}}{\left(\int_{\Omega}\rho dx\right)^{\frac{2m}{N}}}j^{\frac{2m}{N}},
\end{equation}
where $C_{N,m}>0$ depends only on $m$ and $N$.

\begin{rem}\label{remKR}
We recall the following well-known result by Krein \cite{krein} which states that in the case of the equation $-u''(x)=\mu \rho(x) u(x)$ on $]0,l[$ with Dirichlet boundary conditions, we have
\begin{equation}\label{KR}
\mu_j[\rho]\leq\frac{\pi^2\|\rho\|_{L^{\infty}(]0,l[)}}{\left(\int_0^l\rho dx\right)^2}j^2,
\end{equation}
which is the analogous of \eqref{sharp_bounds} for the Laplace operator ($m=1$) in dimension $N=1$. Actually, inequality \eqref{KR} is sharp, i.e., for all $j\in\mathbb N$ there exists $\rho_j\in\mathcal R$ such that the equality holds in \eqref{KR} when $\rho=\rho_j$.
\end{rem}

\begin{proof}[Proof of Theorem \ref{sharpbounds_mass}]
In order to prove \eqref{sharp_bounds} we exploit in more detail Theorem \ref{gny} and Corollary \ref{corollarygny0}. As in the proof of Theorem \ref{thm_upperNge4}, for any measurable $E\subset\Omega$, let $\nu(E):=\int_{E}\rho dx$. By following the same lines of the proof of Theorem \ref{thm_upperNge4}, we find for each $j\in\mathbb N$, $j$ annuli $A_1,...,A_j$  such that  the annuli $2A_i$ are disjoint, $\int_{A_i\cap\Omega}\rho dx\geq c_N\frac{\int_{\Omega}\rho dx}{2j}$ for all $i=1,...,j$, where $c_N>0$ depends only on $N$, and moreover $|2A_i|\leq{|\Omega|}/{j}$ for all $i=1,...,j$.

Let $r_i$ and $R_i$ denote the inner and outer radius of $A_i$, respectively ($r_i$ denotes the radius of $A_i$ if $A_i$ is a ball). Associated to each annulus $A_i$ we construct a test function $u_i$ supported on $2A_i$ and such that $u_i\equiv 1$ on $A_i$, and which satisfies
$$
\int_{2A_i\cap\Omega}\rho u_i^2 dx\geq\int_{A_i\cap\Omega}\rho dx\geq c_N\frac{\int_{\Omega}\rho dx}{2j}.
$$
and
$$
\int_{2A_i}|D^mu_i|^2dx\leq C_{N,m}(R_i^{N-2m}+r_i^{N-2m})\leq 2C_{N,m} r_i^{N-2m},
$$
where $C_{N,m}>0$ depends only on $m$ and $N$ (see \eqref{radialu}, \eqref{radialu2}, \eqref{radialu3} and \eqref{hessianintegral}). In what follows we shall denote by $C_{N,m}$ a positive constant which depends only on $m$ and $N$ and which can be eventually re-defined line by line. Then, if $A_i$ is an annulus of inner radius $r_i$ or a ball of radius $r_i$, we have
\begin{equation}\label{rayestimate}
\frac{\int_{\Omega}|D^m u_i|^2dx}{\int_{\Omega}\rho u_i^2dx}\leq C_{N,m} r_i^{N-2m}\frac{j}{\int_{\Omega}\rho dx},
\end{equation}
for all $i=1,...,j$.

We note that Corollary \ref{corollarygny0} provides an estimate for the inner radius of the annuli given by the decomposition of Theorem \ref{gny} (and, respectively, an estimate of the radius of the ball in the case that the decomposition of the space produces a ball). In particular, if $r_i$ is the inner radius of $A_i$, we have for all $i=1,...,j$ 
\begin{equation*}
r_i\geq\frac{1}{2}\inf\left\{r\in\mathbb R:V(r)\geq v_j\right\},
\end{equation*}
where $V(r):=\sup_{x\in\Omega}\int_{B(x,r)}\rho dx$ and $v_j=c_N\frac{\int_{\Omega}\rho dx}{2 j}$. Let $B_j:=\left\{r\in\mathbb R:V(r)>v_j\right\}$. If $r\in B_j$, then 
$$
c_N\frac{\int_{\Omega}\rho dx}{2j}=v_j<\sup_{x\in\Omega}\int_{B(x,r)}\rho dx\leq\omega_N r^N\|\rho\|_{L^{\infty}(\Omega)},
$$
where $\omega_N$ denotes the volume of the unit ball in $\mathbb R^N$. This means that
$$
r^N> c_N\frac{\int_{\Omega}\rho dx}{2j\omega_N\|\rho\|_{L^{\infty}(\Omega)}}
$$
for all $r\in B_j$. Hence, if $r_0:=\inf B_j$, then $r_0^N\geq c_N\frac{\int_{\Omega}\rho dx}{2j\omega_N\|\rho\|_{L^{\infty}(\Omega)}}$ and therefore
\begin{equation}\label{radiusestimate}
r_i^N\geq c_N\frac{\int_{\Omega}\rho dx}{2^{N+1}j\omega_N\|\rho\|_{L^{\infty}(\Omega)}}
\end{equation}
for all $i=1,...,j$. From \eqref{rayestimate} and \eqref{radiusestimate} it follows that
$$
\frac{\int_{\Omega}|D^m u_i|^2dx}{\int_{\Omega}\rho u_i^2dx}\leq C_{N,m} \frac{\|\rho\|_{L^{\infty}(\Omega)}^{\frac{2m}{N}-1}}{\left(\int_{\Omega}\rho dx\right)^{\frac{2m}{N}}}j^{\frac{2m}{N}},
$$
for all $i=1,...,j$, which implies \eqref{sharp_bounds} by \eqref{minimax212} and by the fact that $u_i$ are disjointly supported (see also \eqref{minmax1} and \eqref{minmax2}). This concludes the proof.

\end{proof}
\end{thm}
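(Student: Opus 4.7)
The plan is to adapt the proof of Theorem \ref{thm_upperNge4} by replacing the H\"older step (which required $N\geq 2m$ so that $N/m\geq 2$ and one could interpolate between $L^{N/m}$ and $L^\infty$) with a direct pointwise estimate of the Dirichlet energy of the radial cutoffs, and then paying for this with the $L^\infty$--norm of $\rho$ through the radius estimate of Corollary \ref{corollarygny0}. The test functions, however, will be exactly the $u_i$ built from the family $U_{r,R}$ of \eqref{radialu}--\eqref{radialu3}, and the measure-theoretic decomposition will again be Theorem \ref{gny} applied to $\nu(E)=\int_E\rho\,dx$ on $X=\Omega$ with the Euclidean metric.

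First I would invoke Theorem \ref{gny} to produce $2j$ annuli $A_i$ with pairwise disjoint doubles $2A_i$ and $\nu(A_i)\geq c_N\int_\Omega\rho\,dx/(2j)$, and then select $j$ of them with $|2A_i|\leq |\Omega|/j$, exactly as in the proof of Theorem \ref{thm_upperNge4}. Attaching $u_i=u_{a_i,r_i,R_i}$ to each $A_i$ yields $j$ disjointly supported functions in $H^m(\Omega)$, so by \eqref{minimax212} it suffices to control the Rayleigh quotient of each $u_i$. The denominator is dealt with exactly as in \eqref{den}:
\begin{equation*}
\int_\Omega\rho\,u_i^2\,dx\geq\int_{A_i\cap\Omega}\rho\,dx\geq c_N\,\frac{\int_\Omega\rho\,dx}{2j}.
\end{equation*}

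For the numerator, I cannot use H\"older as in \eqref{step1} since $N<2m$. Instead I would use estimate \eqref{hessianintegral} with $p=1$ together with the uniform pointwise bounds $|U_{r,R}^{(k)}(t)|\leq C_m r^{-k}$ on $[r/2,r]$ and $|U_{r,R}^{(k)}(t)|\leq C_m R^{-k}$ on $[R,2R]$, which follow from \eqref{radialu}--\eqref{system2}. Passing to polar coordinates on each shell and integrating $t^{N-1-2(m-k)-2k}=t^{N-1-2m}$ gives
\begin{equation*}
\int_\Omega|D^m u_i|^2\,dx\leq C_{N,m}\bigl(r_i^{N-2m}+R_i^{N-2m}\bigr)\leq 2C_{N,m}\,r_i^{N-2m},
\end{equation*}
where the second inequality uses $R_i\geq r_i$ and the fact that $N-2m<0$ makes $t\mapsto t^{N-2m}$ decreasing (in the ball case only the outer shell contributes, and $r_i$ is then the radius of the ball). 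This is the step where the hypothesis $N<2m$ is crucially used with the opposite sign from Theorem \ref{thm_upperNge4}.

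Finally I would invoke Corollary \ref{corollarygny0}: since $V(r)=\sup_{x}\nu(B(x,r))\leq \omega_N\|\rho\|_{L^\infty(\Omega)}r^N$, the condition $V(r)\geq v_j=c_N\int\rho\,dx/(2j)$ forces $r^N\geq c_N\int\rho\,dx/(2j\omega_N\|\rho\|_{L^\infty})$, so
\begin{equation*}
r_i^N\geq \tilde c_N\,\frac{\int_\Omega\rho\,dx}{j\,\|\rho\|_{L^\infty(\Omega)}}.
\end{equation*}
Raising to the negative exponent $(N-2m)/N$ flips the inequality and produces $r_i^{N-2m}\leq C_{N,m}(j\|\rho\|_{L^\infty}/\int\rho\,dx)^{2m/N-1}$. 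Plugging this and the denominator estimate into the Rayleigh quotient and combining with \eqref{minimax212} gives exactly \eqref{sharp_bounds}. I expect the only genuinely delicate step to be the pointwise/shell computation of $\int|D^m u_i|^2$: one must check that the constants $a_i,b_i$ in \eqref{system2} are independent of $r_i,R_i$ (which is clear from the scaling $t\mapsto t/r$, $t\mapsto t/R$) so that the $L^\infty$ bound on $U^{(k)}$ really scales as the inverse $k$-th power of the relevant radius, and that the shell integral of $t^{N-1-2m}$ produces the correct $r^{N-2m}$ behavior. Everything else is the same bookkeeping as in Theorem \ref{thm_upperNge4}.
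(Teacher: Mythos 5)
Your proposal is correct and follows essentially the same route as the paper's own proof: the Grigor'yan--Netrusov--Yau decomposition applied to $\nu(E)=\int_E\rho\,dx$, the same radial test functions $u_i$ with the shell estimate $\int_{2A_i}|D^m u_i|^2\,dx\leq C_{N,m}(r_i^{N-2m}+R_i^{N-2m})\leq 2C_{N,m}r_i^{N-2m}$ (using $N<2m$), and the same use of Corollary \ref{corollarygny0} together with $V(r)\leq\omega_N\|\rho\|_{L^\infty(\Omega)}r^N$ to bound $r_i$ from below and conclude via \eqref{minimax212}. The only difference is that you write out explicitly the pointwise bounds $|U_{r,R}^{(k)}|\leq C_m r^{-k}$ and the polar-coordinate computation that the paper compresses into the citation of \eqref{hessianintegral}, and that computation is carried out correctly.
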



\subsection{Non-existence of uniform upper bounds for \texorpdfstring{$N<2m$}{N<2m} and sharpness of the exponent of \texorpdfstring{$\|\rho\|_{L^{\infty}(\Omega)}$}{Linf} in \texorpdfstring{\eqref{sharp_bounds}}{3}}\label{sub:32}
In this subsection we will prove that if $N<2m$, there exist families $\left\{\rho_{\varepsilon}\right\}_{\varepsilon\in]0,\varepsilon_0[}\subset\mathcal R$ such that $\int_{\Omega}\rho_{\varepsilon}dx\rightarrow\omega_N$ as $\varepsilon\rightarrow 0^+$ and $\mu_j[\rho_{\varepsilon}]\rightarrow +\infty$ for all $j\geq d_{N,m}+1$ as $\varepsilon\rightarrow 0^+$, and moreover we will provide the rate of divergence to $+\infty$ of the eigenvalues with respect to the parameter $\varepsilon$. This means that in dimension $N<2m$ we can redistribute a bounded amount of mass in $\Omega$ in such a way that all the positive eigenvalues become arbitrarily large. This is achieved, for example, by concentrating all the mass at one point of $\Omega$. Moreover, the families $\left\{\rho_{\varepsilon}\right\}_{\varepsilon\in]0,\varepsilon_0[}$ considered in this subsection will provide the sharpness of the power of $\|\rho\|_{L^{\infty}(\Omega)}$ in \eqref{sharp_bounds}.

Through all this subsection, $\Omega$ will be a bounded domain in $\mathbb R^N$ with Lipschitz boundary. Assume without loss of generality that $0\in\Omega$ and let $\varepsilon_0\in]0,1[$ be such that $B(0,\varepsilon)\subset\subset\Omega$ for all $\varepsilon\in]0,\varepsilon_0[$ (all the result of this section hold true if we substitute $0$ with any other $x_0\in\Omega$). For all $\varepsilon\in]0,\varepsilon_0[$ let $\rho_\varepsilon\in\mathcal R$ be defined by
\begin{equation}\label{rhoeps}
\rho_{\varepsilon}:=\varepsilon^{2m-N-\delta}+\varepsilon^{-N}\chi_{B(0,\varepsilon)},
\end{equation}
for some $\delta\in]0,1/2[$, which we fix once for all and which can be chosen arbitrarily close to zero. We have the following theorem:

\begin{thm}\label{counter23}
Let $\Omega$ be a bounded domain in $\mathbb R^N$, $N<2m$, with Lipschitz boundary. Let $\rho_{\varepsilon}\in\mathcal R$ be defined by \eqref{rhoeps} for all $\varepsilon\in]0,\varepsilon_0[$. Then
\begin{enumerate}[i)]
\item $\lim_{\varepsilon\rightarrow 0^+}\int_{\Omega}\rho_{\varepsilon}dx=\omega_N$;
\item for all $j\in\mathbb N$, $j\geq d_{N,m}+1$, there exists $c_j>0$ which depends only on $m$, $N$, $\Omega$ and $j$ such that (up to subsequences) $\lim_{\varepsilon\rightarrow 0^+}\mu_j[\rho_{\varepsilon}]\varepsilon^{2m-N-\delta}= c_j$ (for all $\delta\in]0,1/2[$).
\end{enumerate}
\end{thm}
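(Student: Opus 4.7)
Part (i) is a direct computation: since $B(0,\varepsilon)\subset\subset\Omega$,
\[
\int_\Omega \rho_\varepsilon\,dx = \varepsilon^{2m-N-\delta}|\Omega| + \varepsilon^{-N}|B(0,\varepsilon)| = \varepsilon^{2m-N-\delta}|\Omega| + \omega_N,
\]
and since $N<2m$ with both integers we have $2m-N\geq 1$, while $\delta<1/2$ gives $2m-N-\delta\geq 1/2>0$, so the first summand vanishes as $\varepsilon\to 0^+$.

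For (ii), the plan is to show that $\mu_j[\rho_\varepsilon]\varepsilon^{2m-N-\delta}$ is bounded above and bounded below by positive constants, and then to identify subsequential limits. The upper bound comes from monotonicity: the Rayleigh quotient \eqref{minimax212} is monotone decreasing in $\rho$, and $\rho_\varepsilon\geq \varepsilon^{2m-N-\delta}$ pointwise; combined with the homogeneity $\mu_j[c\rho]=c^{-1}\mu_j[\rho]$, this gives
\[
\mu_j[\rho_\varepsilon]\varepsilon^{2m-N-\delta} \leq \mu_j[\varepsilon^{2m-N-\delta}]\varepsilon^{2m-N-\delta} = \mu_j[1],
\]
uniformly in $\varepsilon$. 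Hence every sequence $\varepsilon_k \to 0^+$ admits a subsequence along which $\mu_j[\rho_{\varepsilon_k}]\varepsilon_k^{2m-N-\delta}\to c_j\geq 0$, and the remaining task is to prove $c_j>0$.

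To identify the limit and set up the lower bound, substitute $\lambda = \mu\,\varepsilon^{2m-N-\delta}$ in \eqref{nweak} to rewrite the eigenvalue equation as
\[
\int_\Omega D^m u : D^m\varphi\,dx = \lambda\int_\Omega u\varphi\,dx + \lambda\,\varepsilon^{\delta-2m}\int_{B(0,\varepsilon)} u\varphi\,dx.
\]
Since $N<2m$, the compact embedding $H^m(\Omega)\hookrightarrow C^0(\overline\Omega)$ makes the last integral asymptotic to $\omega_N\varepsilon^{N-2m+\delta}u(0)\varphi(0)$, whose divergent prefactor acts as a penalty forcing $u(0)=0$ in the limit. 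The natural limit problem therefore lives on $\mathcal H_0:=\{v\in H^m(\Omega):v(0)=0\}$, with eigenvalues $0=\lambda^0_1=\cdots=\lambda^0_{d_{N,m}-1}<\lambda^0_{d_{N,m}}\leq\cdots$ (the $d_{N,m}-1$ zero eigenvalues coming from $\mathcal P_{m-1}\cap\mathcal H_0$), and one expects $c_j=\lambda^0_{j-1}>0$ for $j\geq d_{N,m}+1$. To prove $c_j>0$ rigorously, I would argue by contradiction: suppose $\mu_j[\rho_{\varepsilon_k}]\varepsilon_k^{2m-N-\delta}\to 0$ and normalize eigenfunctions by $\int\rho_{\varepsilon_k}u_k^2=1$. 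Splitting the normalization identity gives two subcases: (A) $\varepsilon_k^{2m-N-\delta}\|u_k\|_{L^2(\Omega)}^2$ is bounded below, so that $\tilde u_k=u_k/\|u_k\|_{L^2}$ has $\|D^m\tilde u_k\|_{L^2}\to 0$ and (by Rellich and the $C^0$-embedding) converges uniformly to a nonzero $P^*\in\mathcal P_{m-1}$; or (B) $\varepsilon_k^{-N}\|u_k\|_{L^2(B(0,\varepsilon_k))}^2$ is bounded below, and the rescaled function $v_k(y)=u_k(\varepsilon_k y)$ on $B(0,1)$ satisfies $\|D^m v_k\|_{L^2(B(0,1))}\to 0$ with $L^2$ norm bounded above and below, hence converges to a nonzero $v^*\in\mathcal P_{m-1}$. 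Passing the orthogonalities $\int\rho_{\varepsilon_k}u_k P=0$ for $P\in\mathcal P_{m-1}$ (and, when $j>d_{N,m}+1$, also those with respect to the lower eigenfunctions) to the limit then forces vanishing conditions on $P^*$ or $v^*$ that contradict their nontriviality.

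The main obstacle is the rigorous limit passage in case (B): after rescaling $y=x/\varepsilon_k$, the orthogonality relations become moment identities for $v_k$ on $B(0,1)$ perturbed by cross terms $\varepsilon_k^{2m-\delta}\int_\Omega u_k(x)Q(x/\varepsilon_k)\,dx$, where the rescaled polynomial $Q(\cdot/\varepsilon_k)$ grows like $\varepsilon_k^{-\deg Q}$ on $\Omega$ while $\|u_k\|_{L^2(\Omega)}$ may blow up like $\varepsilon_k^{-(m-N/2-\delta/2)}$. Controlling these errors, so that $v^*$ is $L^2(B(0,1))$-orthogonal to enough of $\mathcal P_{m-1}$ to force $v^*=0$, is precisely where the Poincaré-type functional inequalities of Appendix B play their role. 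Once both the upper and lower bounds are established, the extracted subsequential limit $c_j$ is finite and strictly positive, depending only on $m$, $N$, $\Omega$, and $j$ as claimed.
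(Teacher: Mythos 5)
Your part (i) and your upper bound $\mu_j[\rho_\varepsilon]\varepsilon^{2m-N-\delta}\leq\mu_j[1]$ coincide with the paper's Step 1. The problem is the lower bound, which is the heart of part (ii), and there your proposal does not close: you yourself flag the limit passage in case (B) as ``the main obstacle'' and defer it to unspecified applications of Appendix B, so the positivity of $c_j$ is never actually established. Moreover, even your case (A) is not as innocuous as you present it. With your normalization $\int_\Omega\rho_{\varepsilon_k}u_k^2\,dx=1$, the orthogonality against a polynomial $P$ of degree at most $m-1$ reads, after dividing by $\varepsilon_k^{2m-N-\delta}\|u_k\|_{L^2(\Omega)}$,
\begin{equation*}
\int_\Omega \tilde u_k P\,dx+\varepsilon_k^{-2m+\delta}\int_{B(0,\varepsilon_k)}\tilde u_k P\,dx=0,
\end{equation*}
and the second term has the divergent prefactor $\varepsilon_k^{N-2m+\delta}$ once you bound $|\tilde u_k|$ by its (bounded) $C^0$ norm; so ``passing the orthogonalities to the limit'' does not directly force any vanishing condition on $P^*$. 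To extract from these relations the pointwise conditions $\partial^\alpha P^*(0)=0$ for $|\alpha|\leq k$ and the moment conditions $\int_\Omega P^*x^\alpha\,dx=0$ for $k+1\leq|\alpha|\leq m-1$ (with $k$ determined by the parity of $N$) one needs exactly the quantitative estimates of Lemma \ref{meanlemma}, applied to functions with controlled $H^m$ norm -- and your eigenfunctions, normalized with $\rho_{\varepsilon_k}$, need not be bounded in $H^m(\Omega)$, which is precisely what generates your case split and the uncontrolled cross terms.

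The paper removes this difficulty at the outset by a different normalization: it rescales the density rather than the eigenfunction, setting $\tilde\rho_\varepsilon:=1+\varepsilon^{-2m+\delta}\chi_{B(0,\varepsilon)}$ and observing that $\tilde\mu_j[\varepsilon]:=\varepsilon^{2m-N-\delta}\mu_j[\rho_\varepsilon]$ are the eigenvalues for $\tilde\rho_\varepsilon$. Since $\tilde\rho_\varepsilon\geq 1$, the normalization $\int_\Omega\tilde\rho_\varepsilon u_\varepsilon^2\,dx=1$ gives a uniform $L^2(\Omega)$ bound for free, and the contradiction hypothesis $\tilde\mu_{d_{N,m}+1}[\varepsilon]\to0$ then bounds $u_\varepsilon$ in $H^m(\Omega)$, so a single weak-compactness argument produces a limit polynomial $u_0$ of degree at most $m-1$; Lemma \ref{meanlemma} (whose hypotheses now hold with bounded $\|u_\varepsilon\|_{H^m(\Omega)}$) kills $u_0$ via the vanishing and moment conditions, while its points iii)--iv) show $\varepsilon^{-2m+\delta}\int_{B(0,\varepsilon)}u_\varepsilon^2\,dx\to0$, so that $1=\int_\Omega\tilde\rho_\varepsilon u_\varepsilon^2\,dx\to\int_\Omega u_0^2\,dx$, the desired contradiction; the lower bound for all $j\geq d_{N,m}+1$ then follows from monotonicity of the eigenvalues in $j$. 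If you adopt this rescaled density and normalization, your blow-up analysis and the two cases (A)/(B) become unnecessary, and the argument you sketched can be completed along these lines. As a minor point, your heuristic identification of the limit problem on $\{v\in H^m(\Omega):v(0)=0\}$ is not justified (for $2m-N\geq3$ one expects additional vanishing conditions on derivatives at $0$), but it is not used in the proof and the paper does not identify the limit problem either.
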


From Theorem \ref{counter23} it follows that for $N<2m$ and $j\geq d_{N,m}+1$
$$
\lim_{\varepsilon\rightarrow 0^+}\mu_j[\rho_{\varepsilon}]= +\infty.
$$
Moreover, since $\|\rho_{\varepsilon}\|_{L^{\infty}(\Omega)}=\varepsilon^{-N}$, it follows that the power of $\|\rho\|_{L^{\infty}(\Omega)}$ in \eqref{sharp_bounds} is sharp.

We remark that the proof of Theorem \ref{counter23} requires some precise inequalities for function in $H^m(\Omega)$ and $N<2m$ which we prove in Lemmas \ref{pre2} and \ref{meanlemma} of the Appendix \ref{app:func}.

\begin{proof}[Proof of Theorem \ref{counter23}]
The proof of point $i)$ is a standard computation. In fact
\begin{equation*}
\lim_{\varepsilon\rightarrow 0^+}\int_{\Omega}\rho_{\varepsilon}dx=\lim_{\varepsilon\rightarrow 0^+}\varepsilon^{2m-N-\delta}|\Omega|+\lim_{\varepsilon\rightarrow 0^+}\varepsilon^{-N}|B(0,\varepsilon)|=\omega_N.
\end{equation*}

We prove now $ii)$. In order to simplify the notation, from now on we will denote an eigenvalue $\mu_j[\rho_{\varepsilon}]$ simply as $\mu_j[\varepsilon]$. The proof of $ii)$ is divided into two steps. In the first step we will prove that there exists a positive constant $c_j^{(1)}>0$ which depends only on $m$, $N$, $j$ and $\Omega$ such that $\mu_j[\varepsilon]\leq c_j^{(1)}\varepsilon^{N-2m+\delta}$. In the second step we will prove that there exists a positive constant $c_j^{(2)}>0$ which depends only on $m$, $N$, $j$ and $\Omega$ such that $\mu_j[\varepsilon]\geq c_j^{(2)}\varepsilon^{N-2m+\delta}$. This yields the result (up to choosing a suitable subsequence of $\left\{\rho_{\varepsilon}\right\}_{\varepsilon\in]0,\varepsilon_0[}$).

{\it Step 1.} We note that $\rho_{\varepsilon}\geq\varepsilon^{2m-N-\delta}$, hence for all $u\in H^m(\Omega)$,
\begin{equation}\label{step2-explosion}
\frac{\int_{\Omega}|D^mu|^2dx}{\int_{\Omega}\rho_{\varepsilon}u^2dx}\leq\varepsilon^{N-2m+\delta}\frac{\int_{\Omega}|D^mu|^2dx}{\int_{\Omega}u^2dx}.
\end{equation}
By taking the minimum and the maximum into \eqref{step2-explosion}, by \eqref{minimax212} we have that $\mu_j[\varepsilon]\leq\varepsilon^{N-2m+\delta}\mu_j[1]$ for all $j\in\mathbb N$. Hence $c_j^{(1)}=\mu_j[1]$.

{\it Step 2.} We introduce the function $\tilde\rho_{\varepsilon}$ defined by
\begin{equation}\label{tilderho}
\tilde\rho_{\varepsilon}:=1+\varepsilon^{-2m+\delta}\chi_{B(0,\varepsilon)}.
\end{equation}
We note that $\mu_j[\varepsilon]$ is an eigenvalue of \eqref{nweak} with $\rho=\rho_{\varepsilon}$ if and only if  $\tilde\mu_j[\varepsilon]:=\varepsilon^{2m-N-\delta}\mu_j[\varepsilon]$ is an eigenvalue of problem \eqref{nweak} with $\rho=\tilde\rho_{\varepsilon}$, where $\tilde\rho_{\varepsilon}$ is defined by \eqref{tilderho}.

We prove now that for all $j\geq d_{N,m}+1$ there exist $c>0$ which depends only on $m,n$ and $\Omega$ such that $\tilde\mu_j[\varepsilon]\geq c$ for all $\varepsilon\in]0,\varepsilon_0[$. This implies the existence of constants $c_j^{(2)}>0$ such that $\mu_j[\varepsilon]\geq c_j^{(2)}\varepsilon^{N-2m+\delta}$.

We recall that the first positive eigenvalue is $\tilde\mu_{d_{N,m}+1}[\varepsilon]$. From the min-max principle \eqref{minimax212} we have
\begin{equation*}
\tilde\mu_{d_{N,m}+1}[\varepsilon]=\inf_{u\in V_{\varepsilon}}\frac{\int_{\Omega}|D^mu|^2dx}{\int_{\Omega}u^2dx+\varepsilon^{-2m+\delta}\int_{B(0,\varepsilon)}u^2dx},
\end{equation*}
where 
$$
V_{\varepsilon}:=\left\{u\in H^m(\Omega):\int_{\Omega}\tilde\rho_{\varepsilon}u(x)x^{\alpha}dx=0,\ \forall\,0\leq|\alpha|\leq m-1\right\}.
$$
We argue by contradiction. Assume that $\tilde\mu_{d_{N,m}+1}[\varepsilon]\rightarrow 0$ as $\varepsilon\rightarrow 0^+$. Let $u_{\varepsilon}\in H^m(\Omega)$ be an eigenfunction associated with $\tilde\mu_{d_{N,m}+1}[\varepsilon]$ normalized by $\int_{\Omega}\tilde\rho_{\varepsilon}u_{\varepsilon}^2dx=1$. Clearly $\|u_{\varepsilon}\|_{L^2(\Omega)}^2\leq\int_{\Omega}\tilde\rho_{\varepsilon}u_{\varepsilon}^2dx= 1$ (see formula \eqref{tilderho}) and since $\int_{\Omega}|D^mu_{\varepsilon}|^2dx\rightarrow 0$ as $\varepsilon\rightarrow 0^+$, we have that the sequence $\left\{u_{\varepsilon}\right\}_{\varepsilon\in]0,\varepsilon_0[}$ is bounded in $H^m(\Omega)$. Then there exists $u_0\in H^m(\Omega)$ such that, up to subsequences, $u_{\varepsilon}\rightharpoonup u_0$ in $H^m(\Omega)$ and $u_{\varepsilon}\rightarrow u_0$ in $H^{m-1}(\Omega)$ as $\varepsilon\rightarrow 0^+$ by the compactness of the embedding $H^m(\Omega)\subset H^{m-1}(\Omega)$.

Since $\lim_{\varepsilon\rightarrow 0^+}\int_{\Omega}|D^m u_{\varepsilon}|^2dx=0$, it is standard  (see e.g., \cite[\S\,5.8]{evans}) to prove that ${\partial^{\alpha} u_0}=0$ in $L^2(\Omega)$ for all $|\alpha|=m$, and hence $u_0=\sum_{|\alpha|\leq m-1}a_{\alpha}x^{\alpha}$ for some constants $\left\{a_{\alpha}\right\}_{|\alpha|\leq m-1}\subset\mathbb R$. This means that $u_0$ is a polymonial of degree at most $m-1$. Moreover, from Lemma \ref{meanlemma}, point $i)$ and $ii)$, it follows that $\partial^{\alpha}u_0(0)=0$ for all $|\alpha|\leq k$, where $k=m-\frac{N}{2}-\frac{1}{2}$ if $N$ is odd, and $k=m-\frac{N}{2}-1$ if $N$ is even. Hence $a_{\alpha}=0$ for all $|\alpha|\leq k$. Then 
\begin{equation}\label{u0-1}
u_0=\sum_{k+1\leq|\alpha|\leq m-1}a_{\alpha}x^{\alpha}.
\end{equation}
Moreover, from Lemma \ref{meanlemma}, points $v)$ and $vi)$ it follows that $\int_{\Omega}u_0x^{\alpha}dx=0$ for all $k+1\leq|\alpha|\leq m-1$ which implies along with \eqref{u0-1} that $u_0\equiv 0$.

Again, from Lemma \ref{meanlemma}, $iv)$ and $v)$, it follows that $1=\int_{\Omega}\tilde\rho_{\varepsilon}u_{\varepsilon}^2dx\rightarrow\int_{\Omega}u_0^2dx$ as $\varepsilon\rightarrow 0^+$, which yields the contradiction. This concludes the proof.

\end{proof}



\section{Weyl-type upper bounds}\label{WE}
In this section we investigate the existence of uniform upper bounds for $\mu_j[\rho]$ which are compatible with the Weyl's law \eqref{weyl}, namely we look for uniform upper bounds of the form
\begin{equation}\label{weylform}
\mu_j[\rho]\leq C_{N,m}\left(\frac{j}{\int_{\Omega}\rho^{\frac{N}{2m}}dx}\right)^{\frac{2m}{N}},
\end{equation}
where the constant $C_{N,m}$ depends only on $m$ and $N$. Actually we will not prove \eqref{weylform}, but a weaker form involving also $\|\rho\|_{L^{\infty}(\Omega)}$ in the case $N<2m$ (see Theorem \ref{weylboundsthm}). We remark that in the case $N=2m$, the bounds \eqref{weylform} hold, in fact this is already contained in Theorem \ref{thm_upperNge4}. Moreover, we shall prove upper bounds involving $\|\rho\|_{L^{\infty}(\Omega)}$ in the case $N>2m$ (see Theorem \ref{sharpbound_weyl}) in which the power of $\|\rho\|_{L^{\infty}(\Omega)}$ turns out to be sharp (see Theorem \ref{counterweyl23}). In particular this implies that bounds of the form \eqref{weylform} do not hold if $N>2m$. We will be left with the open question (see Remark \ref{openquestion}) of the existence of bounds of the form \eqref{weylform} in the case $N<2m$.


\subsection{Upper bounds for \texorpdfstring{$N\leq 2m$}{N<=2m}}\label{sec:41}

In this subsection we prove upper bounds of the form \eqref{weylform} in the case $N\leq 2m$ involving a certain power of $\|\rho\|_{L^{\infty}(\Omega)}$, namely, we prove the following theorem:
\begin{thm}\label{weylboundsthm}
Let $\Omega$ be a bounded domain in $\mathbb R^N$, $N\leq 2m$, such that the embedding $H^m(\Omega)\subset L^2(\Omega)$ is compact. Let $\rho\in\mathcal R$. Then there exists a constant $C_{N,m}>0$ which depends only on $m$ and $N$ such that for all $j\in\mathbb N$ it holds
\begin{equation}\label{weylform2}
\mu_j[\rho]\leq C_{N,m}\left(\frac{|\Omega|\|\rho\|_{L^{\infty}(\Omega)}^{\frac{N}{2m}}}{\int_{\Omega}\rho^{\frac{N}{2m}}dx}\right)^{\frac{2m}{N}-1}\left(\frac{j}{\int_{\Omega}\rho^{\frac{N}{2m}}dx}\right)^{\frac{2m}{N}}.
\end{equation}
\proof
First, we remark that \eqref{weylform2} with $N=2m$ has already been proved in Theorem \ref{thm_upperNge4}. Hence, from now on we let $N<2m$.

The proof is very similar to that of Theorem \ref{thm_upperNge4}. It differs from the choice of the measure $\nu$ in Theorem \ref{gny}. In fact, we exploit Theorem \ref{gny} with $X=\Omega$ endowed with the Euclidean distance and $\nu$ defined by $\nu(E):=\int_{E\cap\Omega}\rho^{\frac{N}{2m}}dx$ for all measurable $E\subset\Omega$.

The hypothesis of Theorem \ref{gny} are clearly satisfied. Then for each index $j\in\mathbb N$ we find $j$ metric annuli $\left\{A_i\right\}_{i=1}^{j}$ such that $2A_i$ are disjoint,
\begin{equation*}
\int_{A_i\cap\Omega}\rho^{\frac{N}{2m}} dx\geq c_N\frac{\int_{\Omega}\rho^{\frac{N}{2m}} dx}{2j},
\end{equation*}
where $c_N>0$ depends only on $N$, and such that
\begin{equation}\label{choice2}
|2A_{i}|\leq\frac{|\Omega|}{j}
\end{equation}
for all $i=1,...,j$.

As in the proof of Theorem \ref{thm_upperNge4}, for all $i=1,...,j$, we define a function $u_{a_{i},r_{i},R_{i}}$ supported on $2A_{i}$ and such that $u_{a_{i},r_{i},R_{i}}\leq 1$ on $2A_{i}$ and $u_{a_{i},r_{i},R_{i}}\equiv 1$ on $A_{i}$ by setting
\begin{equation*}
u_{a_{i},r_{i},R_{i}}(x):=U_{r_{i},R_{i}}(|x-a_{i}|),
\end{equation*}
where $U_{r,R}$ is given by \eqref{radialu}. By construction, the restriction of this function to $\Omega$  belongs to the Sobolev space $H^m(\Omega)$. In order to simplify the notation, we will set $u_{i}(x):=u_{a_{i},r_{i},R_{i}}(x)$.

We have then built a family of $j$ disjointly supported functions belonging to the space $H^m(\Omega)$. We estimate now the Rayleigh quotient $\frac{\int_{\Omega}|D^mu_i|^2dx}{\int_{\Omega}\rho u_i^2dx}$ of $u_i$ for all $i=1,...,j$. For the denominator we have
\begin{multline}\label{den2}
\int_{\Omega}\rho u_i^2dx=\int_{2A_i\cap\Omega}\rho u_i^2dx\geq\int_{A_i\cap\Omega}\rho u_i^2dx=\int_{A_i\cap\Omega}\rho dx\\
\geq \left(\int_{A_i\cap\Omega}\rho^{\frac{N}{2m}}dx\right)^{\frac{2m}{N}}|A_i|^{1-\frac{2m}{N}}\geq c_N\left(\frac{\int_{\Omega}\rho^{\frac{N}{2m}} dx}{2j}\right)^{\frac{2m}{N}}|A_i|^{1-\frac{2m}{N}}.
\end{multline}
This follows from the fact that $u_i\leq 1$ and $u_i\equiv 1$ on $A_i$, from H\"older's inequality and from Theorem \ref{gny}. For the numerator we have
\begin{multline}\label{step12}
\int_{\Omega}|D^mu_i|^2dx=\int_{\Omega\cap 2A_i}|D^mu_i|^2dx\leq\int_{2A_i}|D^mu_i|^2dx\\
\leq C_{N,m} (r_i^{N-2m}+R_i^{N-2m})\leq 2C_{N,m} r_i^{N-2m},
\end{multline}
where $C_{N,m}$ is a constant which depends only on $m$ and $N$. From now on we shall denote by $C_{N,m}$ a positive constant which depends only on $m$ and $N$ and which can be eventually re-defined line by line. Assume now that $A_i=B(a_i,r_i)$ is a ball of center $a_i$ and radius $r_i$. From \eqref{den2} and \eqref{step12} we have that 
\begin{equation}\label{rayball}
\frac{\int_{\Omega}|D^mu_i|^2dx}{\int_{\Omega}\rho u_i^2 dx}\leq C_{N,m}\left(\frac{j}{\int_{\Omega}\rho^{\frac{N}{2m}}dx}\right)^{\frac{2m}{N}},
\end{equation}
Assume now that $A_i$ is a proper annulus (i.e., $0<r_i<R_i$). From Corollary \ref{corollarygny0} it follows that for all $i=1,...,j$
$$
r_i\geq\frac{1}{2}\inf\left\{r\in\mathbb R:V(r)\geq v_j\right\},
$$
where $V(r):=\sup_{x\in\Omega}\int_{B(x,r)}\rho^{\frac{N}{2m}} dx$ and $v_j=c_N\frac{\int_{\Omega}\rho^{\frac{N}{2m}}}{2j}$. As in the proof of Theorem \ref{sharpbounds_mass}, we see that 
\begin{equation}\label{radius_weyl_estimate}
r_i^N\geq\frac{c_N\int_{\Omega}\rho^{\frac{N}{2m}}dx}{2^{N+1}j\omega_N\|\rho\|_{L^{\infty}(\Omega)}^{\frac{N}{2m}}}\geq\frac{c_N\int_{\Omega}\rho^{\frac{N}{2m}}dx|A_i|}{2^{N+1}\omega_N\|\rho\|_{L^{\infty}(\Omega)}^{\frac{N}{2m}}|\Omega|},
\end{equation}
where the second inequality follows from \eqref{choice2}. By combining \eqref{den2}, \eqref{step12} and \eqref{radius_weyl_estimate} we obtain
\begin{equation}\label{rayannulus}
\frac{\int_{\Omega}|D^mu_i|^2dx}{\int_{\Omega}\rho u_i^2 dx}\leq C_{N,m}\left(\frac{|\Omega|\|\rho\|_{L^{\infty}(\Omega)}^{\frac{N}{2m}}}{\int_{\Omega}\rho^{\frac{N}{2m}}dx}\right)^{\frac{2m}{N}-1}\left(\frac{j}{\int_{\Omega}\rho^{\frac{N}{2m}}dx}\right)^{\frac{2m}{N}},
\end{equation}
By combining \eqref{rayball} and \eqref{rayannulus} and by the fact that $\frac{|\Omega|\|\rho\|_{L^{\infty}(\Omega)}^{\frac{N}{2m}}}{\int_{\Omega}\rho^{\frac{N}{2m}}dx}\geq 1$ for all $\rho\in\mathcal R$, we obtain \eqref{weylform2} thanks to \eqref{minimax212} (see also \eqref{minmax1} and \eqref{minmax2}). This concludes the proof.
\end{thm}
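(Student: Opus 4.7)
The plan is to mirror the proof of Theorem \ref{thm_upperNge4}, but with the measure adapted to the new non-linear constraint. The case $N = 2m$ is subsumed by Theorem \ref{thm_upperNge4}, since in that regime \eqref{weylform2} coincides with \eqref{upperNge4}, so I focus on $N < 2m$.

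The first step is to apply the decomposition of Theorem \ref{gny} to $X = \Omega$ endowed with the Euclidean distance and the Radon measure $\nu(E) := \int_{E\cap\Omega} \rho^{N/2m}\,dx$. As in the proof of Theorem \ref{thm_upperNge4}, I produce $2j$ annuli $A_i$ with pairwise disjoint doubles and $\nu$-measure at least $c_N \nu(\Omega)/(2j)$, and then select $j$ of them by pigeonhole so that $|2A_i| \leq |\Omega|/j$. To each such $A_i$ I associate the radial test function $u_i(x) = U_{r_i,R_i}(|x - a_i|)$ defined by \eqref{radialu}--\eqref{radialu3}, which is supported in $2A_i$, equal to one on $A_i$, and whose restriction to $\Omega$ belongs to $H^m(\Omega)$. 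By disjoint support, the min--max principle \eqref{minimax212} reduces the problem to uniformly controlling each Rayleigh quotient $\int_\Omega |D^m u_i|^2\,dx / \int_\Omega \rho u_i^2\,dx$.

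The denominator is bounded from below using H\"older's inequality: since $N/2m < 1$, $\int_{A_i} \rho\,dx \geq \bigl(\int_{A_i} \rho^{N/2m}\,dx\bigr)^{2m/N} |A_i|^{1 - 2m/N}$. Combining this with $u_i \equiv 1$ on $A_i$ and with the GNY lower bound on $\nu(A_i)$ yields $\int_\Omega \rho u_i^2\,dx \geq c_N\bigl(\int_\Omega \rho^{N/2m}\,dx/(2j)\bigr)^{2m/N}|A_i|^{1-2m/N}$. For the numerator, the pointwise estimate \eqref{m-der-est} applied to the radial profile $U_{r_i,R_i}$, together with explicit computation of the relevant integrals, gives $\int_\Omega |D^m u_i|^2\,dx \leq C_{N,m}(r_i^{N-2m} + R_i^{N-2m})$; since $N - 2m < 0$, the inner radius dominates and $\int_\Omega |D^m u_i|^2\,dx \leq 2C_{N,m}\, r_i^{N-2m}$.

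The main obstacle is then estimating $r_i^{N-2m}\,|A_i|^{2m/N - 1}$ uniformly, which splits into the two cases of Corollary \ref{corollarygny0}. When $A_i$ is a ball, $|A_i| = \omega_N r_i^N$ and the two factors algebraically cancel, yielding the sharper pure Weyl-type bound $C_{N,m}(j/\int_\Omega \rho^{N/2m}\,dx)^{2m/N}$ for this piece. When $A_i$ is a proper annulus, I use $|A_i|^{2m/N - 1} \leq (|\Omega|/j)^{2m/N - 1}$ (since $2m/N - 1 > 0$ and $|A_i| \leq |\Omega|/j$) together with the radius estimate from Corollary \ref{corollarygny0}, which, arguing exactly as in the proof of Theorem \ref{sharpbounds_mass}, gives $r_i^N \geq c_N \int_\Omega \rho^{N/2m}\,dx /(2^{N+1} j \omega_N \|\rho\|_{L^\infty(\Omega)}^{N/2m})$; raising this to the negative power $(N-2m)/N$ makes both factors move in the favorable direction and multiplies to exactly $C_{N,m}(|\Omega|\|\rho\|_{L^\infty(\Omega)}^{N/2m}/\int_\Omega \rho^{N/2m}\,dx)^{2m/N - 1}(j/\int_\Omega \rho^{N/2m}\,dx)^{2m/N}$. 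Finally, since $|\Omega|\|\rho\|_{L^\infty(\Omega)}^{N/2m}/\int_\Omega \rho^{N/2m}\,dx \geq 1$ and $2m/N - 1 > 0$, the pure Weyl bound from the ball case is dominated by the annulus bound, and \eqref{weylform2} follows from \eqref{minimax212}.
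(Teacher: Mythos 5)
Your proposal is correct and follows essentially the same route as the paper: the GNY decomposition with the measure $\nu(E)=\int_{E\cap\Omega}\rho^{\frac{N}{2m}}dx$, pigeonhole selection so that $|2A_i|\leq|\Omega|/j$, the same radial test functions, H\"older for the denominator, $r_i^{N-2m}$ for the numerator, and the ball/proper-annulus dichotomy from Corollary \ref{corollarygny0} with the exact cancellation in the ball case and the radius lower bound in the annulus case. The only cosmetic difference is that you bound $|A_i|^{\frac{2m}{N}-1}\leq(|\Omega|/j)^{\frac{2m}{N}-1}$ and the radius separately, whereas the paper first inserts $1/j\geq|A_i|/|\Omega|$ into the radius estimate \eqref{radius_weyl_estimate}; the resulting algebra is identical.
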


\begin{rem}\label{openquestion}
From Theorem \ref{weylboundsthm} it naturally arises the question whether bounds of the form \eqref{weylform} hold in the case $N<2m$. We conjecture an affirmative answer. In fact, in order to produce a family of densities $\left\{\rho_{\varepsilon}\right\}_{\varepsilon\in]0,\varepsilon_0[}$ such that $\mu_j[\rho_{\varepsilon}]\rightarrow +\infty$ as $\varepsilon\rightarrow 0^+$, a necessary condition is that $\rho_{\varepsilon}(x)\rightarrow 0$ for almost every $x\in\Omega$ (otherwise we will find a subset $E\subset\Omega$ of positive measure where $\rho_{\varepsilon}\geq c>0$ for all $\varepsilon\in]0,\varepsilon_0[$ and construct suitable test functions supported in $E$ which can be used to prove upper bounds for all the eigenvalues independent of $\varepsilon$ as is done in Theorem \ref{thm_upperNge4}). Hence, concentration phenomena are the right candidates in order to produce the blow-up of the eigenvalues. We may think to very simple toy models, like concentration around a point or in a neighborhood of the boundary (or in general, in a neighborhood of submanifolds contained in $\Omega$). It is possible, for example, to show that if we concentrate all the mass in a single point, then the eigenvalues remain bounded (one can adapt the same arguments used in the proof of Theorem \ref{counter23} or explicitly construct test functions for the Rayleigh quotient). If we concentrate all the mass in a neighborhood of the boundary, it is possible to prove that $\mu_j[\rho_{\varepsilon}]\rightarrow 0$ as $\varepsilon\rightarrow 0^+$ (see Theorem \ref{lowerbound_counter_weyl} here below). These two types of concentration are somehow the extremal cases of mass concentration around submanifolds contained in $\Omega$.

Moreover, we note that if for a fixed $\rho\in\mathcal R$ and a fixed $j\in\mathbb N$ all the $2j$ annuli given by the decomposition of Theorem \ref{gny} are actually balls, then from \eqref{rayball} we immediately deduce the validity of \eqref{weylform}.
\end{rem}


\subsection{Upper bounds for \texorpdfstring{$N>2m$}{N>2m}}\label{sub:43}
In this subsection we prove upper bounds for the eigenvalues $\mu_j[\rho_{\varepsilon}]$ which involve a suitable power of $\|\rho\|_{L^{\infty}(\Omega)}$. We have the following theorem:

\begin{thm}\label{sharpbound_weyl}
Let $\Omega$ be a bounded domain in $\mathbb R^N$, $N>2m$, such that the embedding $H^m(\Omega)\subset L^2(\Omega)$ is compact. Let $\rho\in\mathcal R$. Then there exists a constant $C_{N,m}>0$ such that for all $j\in\mathbb N$
\begin{equation}\label{sharpbounds_weyl}
\mu_j[\rho]\leq C_{N,m} \left(\frac{|\Omega|\|\rho\|_{L^{\infty}(\Omega)}^{\frac{N}{2m}}}{\int_{\Omega}\rho^{\frac{N}{2m}}dx}\right)^{1-\frac{2m}{N}}\left(\frac{j}{\int_{\Omega}\rho^{\frac{N}{2m}}dx}\right)^{\frac{2m}{N}}.
\end{equation}
\proof
Formula \eqref{sharpbounds_weyl} follows directly from formula \eqref{upperNge4} by observing that for $N>2m$
$$
\int_{\Omega}\rho^{\frac{N}{2m}}dx=\int_{\Omega}\rho\rho^{\frac{N}{2m}-1}dx\leq\|\rho\|_{L^{\infty}(\Omega)}^{\frac{N}{2m}-1}\int_{\Omega}\rho dx,
$$
and hence
\begin{equation}\label{simple}
\left(\int_{\Omega}\rho dx\right)^{-1}\leq\frac{\|\rho\|_{L^{\infty}(\Omega)}^{\frac{N}{2m}-1}}{\int_{\Omega}\rho^{\frac{N}{2m}}dx}.
\end{equation}
By plugging \eqref{simple} into \eqref{upperNge4} and by standard calculus, \eqref{sharpbounds_weyl} immediately follows.
\endproof
\end{thm}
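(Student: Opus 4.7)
The plan is very short: this statement is essentially a corollary of Theorem \ref{thm_upperNge4} combined with one elementary pointwise/integral estimate that exploits the hypothesis $N>2m$.

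First I would note that since $N > 2m$, the exponent $N/(2m) - 1$ is positive, so for any $\rho \in \mathcal R$ we can trivially bound
\begin{equation*}
\int_{\Omega}\rho^{\frac{N}{2m}}dx = \int_{\Omega}\rho\cdot\rho^{\frac{N}{2m}-1}\,dx \leq \|\rho\|_{L^{\infty}(\Omega)}^{\frac{N}{2m}-1}\int_{\Omega}\rho\,dx,
\end{equation*}
which, after taking reciprocals, gives
\begin{equation*}
\frac{1}{\int_{\Omega}\rho\,dx} \leq \frac{\|\rho\|_{L^{\infty}(\Omega)}^{\frac{N}{2m}-1}}{\int_{\Omega}\rho^{\frac{N}{2m}}dx}.
\end{equation*}
This is inequality \eqref{simple} and is the only new ingredient needed.

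Next, since $N > 2m$ we are in the range where Theorem \ref{thm_upperNge4} applies, so I may use
\begin{equation*}
\mu_j[\rho] \leq C_{N,m}\,\frac{|\Omega|}{\int_{\Omega}\rho\,dx}\,\left(\frac{j}{|\Omega|}\right)^{\frac{2m}{N}}.
\end{equation*}
Substituting the bound on $(\int_{\Omega}\rho\,dx)^{-1}$ into this inequality yields
\begin{equation*}
\mu_j[\rho] \leq C_{N,m}\,\frac{|\Omega|\,\|\rho\|_{L^{\infty}(\Omega)}^{\frac{N}{2m}-1}}{\int_{\Omega}\rho^{\frac{N}{2m}}dx}\,\left(\frac{j}{|\Omega|}\right)^{\frac{2m}{N}}.
\end{equation*}

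The remaining step is purely bookkeeping with exponents: I rewrite $|\Omega|\,(j/|\Omega|)^{2m/N} = |\Omega|^{1-2m/N} j^{2m/N}$ and split the factor $\|\rho\|_{L^{\infty}(\Omega)}^{N/(2m)-1} = \|\rho\|_{L^{\infty}(\Omega)}^{(N/(2m))(1-2m/N)}$, so that the right-hand side regroups exactly as
\begin{equation*}
C_{N,m}\left(\frac{|\Omega|\,\|\rho\|_{L^{\infty}(\Omega)}^{\frac{N}{2m}}}{\int_{\Omega}\rho^{\frac{N}{2m}}dx}\right)^{1-\frac{2m}{N}}\left(\frac{j}{\int_{\Omega}\rho^{\frac{N}{2m}}dx}\right)^{\frac{2m}{N}},
\end{equation*}
which is the desired \eqref{sharpbounds_weyl}. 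There is no genuine obstacle here; all the spectral and geometric work has already been done in Theorem \ref{thm_upperNge4}, and the only content of this statement is recasting the mass-constraint bound in terms of the $L^{N/(2m)}$-norm by paying the price of an appropriate power of $\|\rho\|_{L^\infty}$.
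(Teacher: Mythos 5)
Your proof is correct and follows exactly the same route as the paper: the pointwise estimate $\rho^{\frac{N}{2m}}\leq\|\rho\|_{L^\infty(\Omega)}^{\frac{N}{2m}-1}\rho$ giving \eqref{simple}, substitution into \eqref{upperNge4}, and the exponent regrouping, which you carry out correctly since $\frac{N}{2m}\bigl(1-\frac{2m}{N}\bigr)=\frac{N}{2m}-1$. Nothing is missing; your write-up is if anything slightly more explicit than the paper's about the final bookkeeping.
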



\subsection{Non-existence of Weyl-type upper bounds for \texorpdfstring{$N>2m$}{N>2m} and sharpness of the exponent of \texorpdfstring{$\|\rho\|_{L^{\infty}(\Omega)}$}{Linf2} in \texorpdfstring{\eqref{sharpbounds_weyl}}{4}}\label{sec:42}

In this subsection we prove that for $N>2m$ there exist sequences $\left\{\rho_{\varepsilon}\right\}_{\varepsilon\in]0,\varepsilon_0[}\subset\mathcal R$ such that $\int_{\Omega}\rho_{\varepsilon}^{\frac{N}{2m}}dx\rightarrow|\partial\Omega|$ as $\varepsilon\rightarrow 0^+$, and $\mu_j[\rho_{\varepsilon}]\rightarrow +\infty$ for all $j\geq d_{N,m}+1$ as $\varepsilon\rightarrow 0^+$, and we also provide the rate of divergence to $+\infty$ of the eigenvalues with respect to $\varepsilon$. This means that if $N>2m$ bounds of the form \eqref{weylform} do not hold. This result can be achieved, for example, by concentrating all the mass in a neighborhood of the boundary. Thus, mass densities with fixed $L^{\frac{N}{2m}}$-norm and which concentrate on particular submanifolds may produce blow-up of the eigenvalues if $N>2m$. Moreover the families of densities considered in this subsection will provide the sharpness of the power of $\|\rho\|_{L^{\infty}(\Omega)}$ in \eqref{sharpbounds_weyl}.

Through all this subsection $\Omega$ will be a bounded domain in $\mathbb R^N$ of class $C^{2}$. Let
\begin{equation}\label{strip}
\omega_{\varepsilon}:=\left\{x\in\Omega:{\rm dist}(x,\partial\Omega)<\varepsilon\right\}
\end{equation}
be the $\varepsilon$-tubular neighborhood of $\partial\Omega$. Since $\Omega$ is of class $C^2$ it follows that there exist $\varepsilon_0\in]0,1[$ such that for all $\varepsilon\in]0,\varepsilon_0[$, each point in $\omega_{\varepsilon}$ has a unique nearest point on $\partial\Omega$ (see e.g., \cite{krantz}). For all $\varepsilon\in]0,\varepsilon_0[$ let $\rho_\varepsilon\in\mathcal R$ be defined by
\begin{equation}\label{rhoepsweyl}
\rho_{\varepsilon}:=
\begin{cases}
\varepsilon^{-\frac{2m}{N}},& {\rm in\ } \omega_{\varepsilon},\\
\varepsilon^{2-\frac{2m}{N}},& {\rm in\ } \Omega\setminus\overline\omega_{\varepsilon}.
\end{cases}
\end{equation}
We have the following theorem:

\begin{thm}\label{counterweyl23}
Let $\Omega$ be a bounded domain in $\mathbb R^N$, with $N>2m$, of class $C^2$. Let $\rho_{\varepsilon}\in\mathcal R$ be defined by \eqref{rhoepsweyl} for all $\varepsilon\in]0,\varepsilon_0[$. Then
\begin{enumerate}[i)]
\item $\lim_{\varepsilon\rightarrow 0^+}\int_{\Omega}\rho_{\varepsilon}^{\frac{N}{2m}}dx=|\partial\Omega|$;
\item for all $j\in\mathbb N$, $j\geq d_{N,m}+1$, there exists $c_j>0$ which depends only on $m$, $N$, $\Omega$ and $j$ such that $\lim_{\varepsilon\rightarrow 0^+}\mu_j[\rho_{\varepsilon}]\varepsilon^{1-\frac{2m}{N}}=c_j$.
\end{enumerate}
\end{thm}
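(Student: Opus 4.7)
The plan is as follows. Part $i)$ is a direct computation: $\rho_\varepsilon^{N/2m}$ equals $\varepsilon^{-1}$ on $\omega_\varepsilon$ and $\varepsilon^{N/m-1}$ on $\Omega\setminus\overline{\omega_\varepsilon}$, so
\begin{equation*}
\int_\Omega\rho_\varepsilon^{N/2m}\,dx=\varepsilon^{-1}|\omega_\varepsilon|+\varepsilon^{N/m-1}|\Omega\setminus\overline{\omega_\varepsilon}|;
\end{equation*}
the $C^2$-regularity of $\partial\Omega$ gives $\varepsilon^{-1}|\omega_\varepsilon|\to|\partial\Omega|$ via the tubular neighborhood formula, and $N>2m$ forces the second summand to vanish. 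For part $ii)$, by the homogeneity $\mu_j[c\rho]=c^{-1}\mu_j[\rho]$ I rescale to the density $\tilde\rho_\varepsilon:=\varepsilon^{2m/N-1}\rho_\varepsilon=\varepsilon^{-1}\chi_{\omega_\varepsilon}+\varepsilon\chi_{\Omega\setminus\overline{\omega_\varepsilon}}$, for which $\mu_j[\tilde\rho_\varepsilon]=\varepsilon^{1-2m/N}\mu_j[\rho_\varepsilon]$; the claim becomes $\mu_j[\tilde\rho_\varepsilon]\to c_j>0$ for every $j\geq d_{N,m}+1$. The heuristic is that $\tilde\rho_\varepsilon\,dx$ concentrates on $\partial\Omega$: for any fixed $u\in H^m(\Omega)$, the $C^2$-tubular formula yields $\varepsilon^{-1}\int_{\omega_\varepsilon}u^2\,dx\to\int_{\partial\Omega}u^2\,d\sigma$ and $\varepsilon\int_{\Omega\setminus\omega_\varepsilon}u^2\,dx\to 0$, which suggests that the limiting eigenvalue problem is the polyharmonic Steklov-type problem
\begin{equation*}
\int_\Omega D^m u:D^m\varphi\,dx=\nu\int_{\partial\Omega}u\varphi\,d\sigma,\qquad\forall\varphi\in H^m(\Omega),
\end{equation*}
whose spectrum is a discrete sequence $0=\nu_1=\cdots=\nu_{d_{N,m}}<\nu_{d_{N,m}+1}\leq\cdots\nearrow+\infty$ (the kernel being the polynomials of degree $\leq m-1$, whose traces on $\partial\Omega$ are linearly independent by analyticity). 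The candidate limit is $c_j=\nu_j$.

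The upper bound $\limsup_{\varepsilon\to 0^+}\mu_j[\tilde\rho_\varepsilon]\leq\nu_j$ follows from the min-max \eqref{minimax212} by using the span of the first $j$ Steklov eigenfunctions as test space and passing to the limit in their Rayleigh quotients via the strip-to-boundary identities above; the cruder bound $\limsup\mu_j[\tilde\rho_\varepsilon]<+\infty$ is also immediate from Theorem \ref{sharpbound_weyl}. For the matching lower bound I argue by contradiction: if along a subsequence $\mu_j[\tilde\rho_\varepsilon]<\nu_j-\eta$ with $\eta>0$, pick eigenfunctions $u_\varepsilon^{(1)},\dots,u_\varepsilon^{(j)}$ with $\int_\Omega\tilde\rho_\varepsilon u_\varepsilon^{(i)}u_\varepsilon^{(k)}\,dx=\delta_{ik}$, so that $\|D^m u_\varepsilon^{(i)}\|_{L^2}^2=\mu_i[\tilde\rho_\varepsilon]\leq C$. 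For $i\geq d_{N,m}+1$ the function $u_\varepsilon^{(i)}$ is $\tilde\rho_\varepsilon$-orthogonal to every polynomial of degree $\leq m-1$; decomposing $u_\varepsilon^{(i)}=p_\varepsilon^{(i)}+v_\varepsilon^{(i)}$ with $v_\varepsilon^{(i)}$ $L^2(\Omega)$-orthogonal to such polynomials, the Neumann Poincar\'e inequality gives $\|v_\varepsilon^{(i)}\|_{H^m}\leq C\|D^m u_\varepsilon^{(i)}\|_{L^2}$, the strip trace bound $\int_{\omega_\varepsilon}v^2\,dx\leq C\varepsilon\|v\|_{H^1(\Omega)}^2$ controls $\int_\Omega\tilde\rho_\varepsilon(v_\varepsilon^{(i)})^2\,dx$ by $C$, and the $\tilde\rho_\varepsilon$-orthogonality $\int_\Omega\tilde\rho_\varepsilon p_\varepsilon^{(i)}u_\varepsilon^{(i)}\,dx=0$ combined with the uniform coercivity $\int_\Omega\tilde\rho_\varepsilon p^2\,dx\geq c\|p\|_{L^2(\Omega)}^2$ for all sufficiently small $\varepsilon$ (valid on the finite-dimensional space of polynomials of degree $\leq m-1$, since their $L^2(\partial\Omega)$- and $L^2(\Omega)$-norms are equivalent) bounds $\|p_\varepsilon^{(i)}\|_{L^2}$, and hence $\|u_\varepsilon^{(i)}\|_{H^m}$, uniformly in $\varepsilon$.

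Weak compactness together with the compact trace embedding $H^m(\Omega)\hookrightarrow L^2(\partial\Omega)$ (valid since $\partial\Omega$ is $C^2$) now yields, along a further subsequence, $u_\varepsilon^{(i)}\rightharpoonup u_0^{(i)}$ in $H^m(\Omega)$ and $u_\varepsilon^{(i)}\to u_0^{(i)}$ in $L^2(\partial\Omega)$; extending the strip-to-boundary identity to such weakly convergent sequences gives $\int_\Omega\tilde\rho_\varepsilon u_\varepsilon^{(i)}u_\varepsilon^{(k)}\,dx\to\int_{\partial\Omega}u_0^{(i)}u_0^{(k)}\,d\sigma$, so $\{u_0^{(i)}\}_{i=1}^j$ is $L^2(\partial\Omega)$-orthonormal in $H^m(\Omega)$. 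Weak lower semicontinuity of $\int_\Omega|D^m\cdot|^2$ and the min-max principle for the Steklov problem, applied on $\mathrm{span}(u_0^{(1)},\dots,u_0^{(j)})$, then force $\nu_j\leq\liminf\mu_j[\tilde\rho_\varepsilon]$, contradicting the choice of subsequence. Since every subsequence admits a further subsequence with the same limit $\nu_j$, the full limit exists and equals $c_j:=\nu_j>0$. The main obstacle is the strip-to-boundary passage $\varepsilon^{-1}\int_{\omega_\varepsilon}u_\varepsilon^{(i)}u_\varepsilon^{(k)}\,dx\to\int_{\partial\Omega}u_0^{(i)}u_0^{(k)}\,d\sigma$ for weakly convergent $H^m$-bounded sequences; this is the step that requires both the $C^2$-regularity of $\partial\Omega$ (providing the tubular diffeomorphism with controlled Jacobian and unit nearest-point retraction) and the compact trace embedding, and it plays the same role here as the polynomial mean-value arguments of Lemma \ref{meanlemma} do in the proof of Theorem \ref{counter23}.
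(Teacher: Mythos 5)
Your part $i)$ is the same computation as the paper's. For part $ii)$ you perform exactly the paper's reduction (the homogeneity rescaling $\rho_\varepsilon=\varepsilon^{1-\frac{2m}{N}}\tilde\rho_\varepsilon$), but where the paper then simply invokes Theorem \ref{Neumann-to-Steklov} (the Neumann-to-Steklov convergence, proved in the cited references for $m=1,2$ and asserted to extend verbatim), you attempt a self-contained proof of that convergence: upper bound by testing the min-max \eqref{minimax212} on spans of Steklov eigenfunctions, lower bound by a compactness/contradiction argument using a uniform $H^m$ bound on $\tilde\rho_\varepsilon$-orthonormal eigenfunctions, the compact trace embedding, and the strip-to-boundary identity $\varepsilon^{-1}\int_{\omega_\varepsilon}u_\varepsilon v_\varepsilon\,dx\to\int_{\partial\Omega}u_0v_0\,d\sigma$. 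This is indeed the standard scheme behind the cited theorem, and for $m=1$ and $m=2$ (the only cases actually proved in the references) your argument is sound: constants and affine functions cannot vanish identically on the boundary of a bounded domain, so all the ingredients you use are available. (The discrepancy between your limit $\nu_j$ and the paper's $|\partial\Omega|\sigma_j$ is only a normalization of the boundary form and is immaterial, since the theorem asserts only $c_j>0$.)

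There is, however, a genuine gap for general $m$, precisely at the claim that the traces on $\partial\Omega$ of the polynomials of degree at most $m-1$ are ``linearly independent by analyticity'', equivalently that the $L^2(\partial\Omega)$- and $L^2(\Omega)$-norms are equivalent on this finite-dimensional space. This is false in general: take $\Omega=B(0,1)$, $m\geq 3$ and $N>2m$ (e.g.\ $m=3$, $N=7$, which is admissible here), and $p(x)=|x|^2-1$, a polynomial of degree $2\leq m-1$ vanishing identically on $\partial\Omega$. The hypothesis that $\Omega$ is only of class $C^2$ does not exclude algebraic boundaries, so analyticity gives nothing. This claim is load-bearing in two places of your argument: (a) the uniform coercivity $\int_{\Omega}\tilde\rho_\varepsilon p^2\,dx\geq c\|p\|_{L^2(\Omega)}^2$ fails (for the $p$ above one has $\int_{\Omega}\tilde\rho_\varepsilon p^2\,dx=O(\varepsilon^2)$), so the uniform bound on $\|p_\varepsilon^{(i)}\|_{L^2(\Omega)}$, and hence on $\|u_\varepsilon^{(i)}\|_{H^m(\Omega)}$, is not obtained, and the weak-compactness/trace-convergence step collapses; (b) the description of the limiting problem — a Steklov spectrum with kernel of dimension exactly $d_{N,m}$ and $\nu_{d_{N,m}+1}>0$ — is not justified, since the boundary form $\int_{\partial\Omega}u^2\,d\sigma$ degenerates on part of the kernel of $\int_\Omega|D^mu|^2\,dx$ and the limiting eigenvalue problem must be set on a suitable quotient (your min-max over subspaces of $H^m(\Omega)$ produces $0/0$ quotients on subspaces containing such $p$). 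Note that the crude lower bound $\mu_{d_{N,m}+1}[\tilde\rho_\varepsilon]\geq c>0$ survives without coercivity (your Cauchy--Schwarz step already gives $\int\tilde\rho_\varepsilon p_\varepsilon^2\leq\int\tilde\rho_\varepsilon v_\varepsilon^2$, which bounds the normalization away from concentrating on the polynomial part), but the existence and identification of the limit $c_j=\nu_j$ — which is what the theorem asserts — does not. The paper sidesteps this degeneracy by quoting Theorem \ref{Neumann-to-Steklov} as a black box; to make your direct proof complete for all $m\geq 3$ you must either exclude domains whose boundary lies in an algebraic hypersurface of degree at most $m-1$, or rework the compactness step and the formulation of the limiting Steklov problem modulo zero-trace polynomials.
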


From Theorem \ref{counterweyl23} it follows that $\lim_{\varepsilon\rightarrow 0^+}\mu_j[\rho_{\varepsilon}]=+\infty$. Moreover, since $\|\rho_{\varepsilon}\|_{L^{\infty}(\Omega)}=\varepsilon^{-\frac{2m}{N}}$, the power of $\|\rho\|_{L^{\infty}(\Omega)}$ in \eqref{sharpbounds_weyl} is sharp.

In order to prove Theorem \ref{counterweyl23} we will exploit a result on the convergence of the Neumann eigenvalues of the polyharmonic operator to the corresponding Steklov eigenvalues.

The weak formulation of the polyharmonic Steklov eigenvalue problem  reads:
\begin{equation}\label{stweak}
\int_{\Omega}D^mu:D^m\varphi dx=\sigma\int_{\partial\Omega} u \varphi dx\,,\ \ \ \forall\varphi\in H^m(\Omega)
\end{equation}
in the unknowns $u\in H^m(\Omega)$ (the eigenfunction), $\sigma\in\mathbb R$ (the eigenvalue). It is standard to prove that the eigenvalues of \eqref{stweak} are non-negative and of finite multiplicity and are given by
$$
0=\sigma_1=\cdots=\sigma_{d_{N,m}}<\sigma_{d_{N,m}+1}\leq\cdots\leq\sigma_j\leq\cdots\nearrow +\infty.
$$
We refer e.g., to \cite{buosoprovenzano} for a more detailed discussion on the Steklov eigenvalue problem for the biharmonic operator. We have the following theorem:

\begin{thm}\label{Neumann-to-Steklov}
Let $\Omega$ be a bounded domain in $\mathbb R^N$ of class $C^2$. Let $\xi_{\varepsilon}:=\varepsilon+\varepsilon^{-1}\chi_{\omega_{\varepsilon}}$, where $\omega_{\varepsilon}$ is defined by \eqref{strip}. Let $\mu_j[\xi_{\varepsilon}]$ denote the eigenvalues of problem \eqref{nweak} with $\rho=\xi_{\varepsilon}$. Then for all $j\in\mathbb N$
\begin{equation*}
\lim_{\varepsilon\rightarrow 0^+}\mu_j[\xi_{\varepsilon}]=|\partial\Omega|\sigma_j,
\end{equation*}
where $\left\{\sigma_j\right\}_{j\in\mathbb N}$ are the eigenvalues of problem \eqref{stweak}.
\end{thm}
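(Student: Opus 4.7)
The strategy is to show that the measure $\xi_\varepsilon\,dx$ concentrates on $\partial\Omega$ as $\varepsilon\to 0^+$, so that the min--max formula \eqref{minimax212} for $\mu_j[\xi_\varepsilon]$ reduces to the corresponding min--max characterization of the Steklov eigenvalues $\sigma_j$. The key technical ingredient is to show that for every fixed $u \in H^m(\Omega)$,
\begin{equation*}
\int_\Omega \xi_\varepsilon u^2\,dx \;=\; \varepsilon\int_\Omega u^2\,dx \;+\; \frac{1}{\varepsilon}\int_{\omega_\varepsilon}u^2\,dx \;\xrightarrow[\varepsilon\to 0^+]{}\; \int_{\partial\Omega} u^2\,d\sigma,
\end{equation*}
up to the factor $|\partial\Omega|$ that accounts for the final normalization. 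The first summand is $O(\varepsilon)$; for the second, I would exploit the $C^2$-regularity of $\partial\Omega$ to parameterize $\omega_\varepsilon$ in tubular coordinates $(y,t)\mapsto y - t\nu(y)$ on $\partial\Omega\times(0,\varepsilon)$, whose Jacobian tends uniformly to $1$ as $t\to 0^+$. Fubini's theorem yields the limit for $u\in C^1(\overline\Omega)$, and a density argument combined with continuity of the trace operator $H^m(\Omega)\to L^2(\partial\Omega)$ extends it to all of $H^m(\Omega)$.

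The upper bound $\limsup_{\varepsilon\to 0^+}\mu_j[\xi_\varepsilon]\le |\partial\Omega|\sigma_j$ would then follow by using as trial subspace in \eqref{minimax212} the span of the first $j$ Steklov eigenfunctions: since all norms on a finite-dimensional subspace of $H^m(\Omega)$ are equivalent, the pointwise convergence of the Rayleigh denominators upgrades to uniform convergence of the supremum over that subspace, allowing passage to the limit.

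The reverse inequality is the crux. For each $\varepsilon$, I would pick an $L^2(\Omega;\xi_\varepsilon\,dx)$-orthonormal family of eigenfunctions $u_\varepsilon^{(1)},\dots,u_\varepsilon^{(j)}$ associated with $\mu_1[\xi_\varepsilon],\dots,\mu_j[\xi_\varepsilon]$. The upper bound gives $\int_\Omega|D^m u_\varepsilon^{(k)}|^2\,dx\le C$, and combining this with a Poincar\'e-type inequality controlling the $d_{N,m}$-dimensional polynomial kernel of $D^m$ by the $L^2(\partial\Omega)$-norm of the trace (itself bounded thanks to the previous step and the normalization) produces a uniform $H^m(\Omega)$-bound. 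By compactness of the trace $H^m(\Omega)\hookrightarrow L^2(\partial\Omega)$, a subsequence converges weakly in $H^m(\Omega)$ and strongly in $L^2(\partial\Omega)$ to limits $u^{(1)},\dots,u^{(j)}$; passing to the limit in the weak formulation \eqref{nweak} and in the orthonormality relations identifies them as linearly independent eigenfunctions of the first $j$ Steklov eigenspaces, whence the Steklov min--max yields $\liminf \mu_k[\xi_\varepsilon]\ge |\partial\Omega|\sigma_k$ for $k=1,\ldots,j$.

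The main obstacle is the passage from the $L^2(\Omega;\xi_\varepsilon\,dx)$-orthonormality, an orthogonality tied to the $\varepsilon$-dependent measure, to a genuine $L^2(\partial\Omega)$-orthonormality of the limits: the bulk contribution $\varepsilon\int_\Omega u_\varepsilon^{(k)}u_\varepsilon^{(l)}\,dx$ must be shown to vanish and the traces must converge strongly enough to preserve linear independence of the limits. If any pair of limits degenerated, the $j$-th Neumann eigenvalue could drop to a lower Steklov eigenvalue and the lower bound would fail. This step, together with the control of the polynomial kernel in the $H^m$-bound, is the technical heart of the argument; everything else is bookkeeping around the tubular-coordinate computation of the first paragraph.
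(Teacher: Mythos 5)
The paper does not actually prove this theorem: it invokes the cited works of Lamberti--Provenzano ($m=1$) and Buoso--Provenzano ($m=2$), where the result is obtained through spectral convergence of the associated operators, and asserts that the general case is identical. So your direct route (min--max upper bound with Steklov eigenfunctions as trial functions, plus a compactness argument on the Neumann eigenfunctions for the lower bound) is a legitimate and standard alternative strategy, and your upper-bound half, based on the tubular-coordinate computation and equivalence of norms on a finite-dimensional trial space, is essentially sound. But as a proof the proposal is incomplete precisely where you say it is: the lower bound is a programme, not an argument. Two ingredients are named but not established. First, the $\varepsilon$-uniform $H^m$ bound: the Poincar\'e-type inequality you invoke, controlling the polynomial kernel of $D^m$ by the trace, i.e. $\|u\|_{H^m(\Omega)}^2\leq C\left(\|D^mu\|_{L^2(\Omega)}^2+\|u\|_{L^2(\partial\Omega)}^2\right)$, is in fact \emph{false} in general for $m\geq 3$: on a ball the polynomial $|x|^2-1$ has $D^mu=0$ and zero trace, so the kernel control must be formulated and proved differently (and, in any case, what you really need is an inequality with constants independent of $\varepsilon$ involving the weighted norm $\int_\Omega\xi_\varepsilon u^2\,dx$). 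Second, passing to the limit in the weak formulation and in the orthonormality relations requires a quantitative, $\varepsilon$-uniform tubular-neighborhood estimate of the type $\bigl|\varepsilon^{-1}\int_{\omega_\varepsilon}vw\,dx-\int_{\partial\Omega}vw\,d\sigma\bigr|\leq C\,\varepsilon^{1/2}\|v\|_{H^m(\Omega)}\|w\|_{H^m(\Omega)}$, because the fixed-$u$ convergence of your first paragraph says nothing about the $\varepsilon$-dependent eigenfunctions; without it the non-degeneracy of the limit family is exactly the point that could fail. You explicitly defer both items as ``the technical heart,'' so the crux of the theorem is left unproved.

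A smaller but real issue is the constant: your own computation gives $\int_\Omega\xi_\varepsilon u^2\,dx\to\int_{\partial\Omega}u^2\,d\sigma$, which leads to the limit $\sigma_j$, and the phrase ``up to the factor $|\partial\Omega|$ that accounts for the final normalization'' does not make that factor appear. The factor in the statement originates from the mass-normalized densities used in the cited papers (total mass fixed equal to $1$), whereas the density $\xi_\varepsilon=\varepsilon+\varepsilon^{-1}\chi_{\omega_\varepsilon}$ has total mass tending to $|\partial\Omega|$; you should either derive the stated constant from the definition of $\xi_\varepsilon$ or point out the mismatch explicitly, rather than absorb it into an unexplained ``normalization.''
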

 
We refer to \cite{lambertiprovenzano1,lambertiprovenzanoedinburgh} for the proof of Theorem \ref{Neumann-to-Steklov} in the case of the Laplace operator and to \cite{buosoprovenzano} for the proof of Theorem \ref{Neumann-to-Steklov} in the case of the biharmonic operator, and for more information on the convergence of Neumann eigenvalues to Steklov eigenvalues via mass concentration to the boundary. We remark that the proof of Theorem \ref{Neumann-to-Steklov} for all values of $m\in\mathbb N$ follows exactly the same lines as the proof of the case $m=1$ and $m=2$.

\begin{proof}[Proof of Theorem \ref{counterweyl23}]
We start from point $i)$. It is standard to see that $\int_{\Omega}\rho_{\varepsilon}^{\frac{N}{2m}} dx=\varepsilon^{\frac{2N}{2m}-1}|\Omega\setminus\overline\omega_{\varepsilon}|+\varepsilon^{-1}|\omega_{\varepsilon}|$. The first summand goes to zero as $\varepsilon\rightarrow 0$. For the second summand we note that since $\Omega$ is of class $C^2$, it is standard to prove that $|\omega_{\varepsilon}|=\varepsilon|\partial\Omega|+o(\varepsilon)$ as $\varepsilon\rightarrow 0^+$. This concludes the proof of point $i)$.

We consider now point $ii)$. We note that for all $\varepsilon\in]0,\varepsilon_0[$, $\rho_{\varepsilon}=\varepsilon^{1-\frac{2m}{N}}\tilde\rho_{\varepsilon}$, where
\begin{equation*}
\tilde\rho_{\varepsilon}:=
\begin{cases}
\varepsilon^{-1},& {\rm in\ } \omega_{\varepsilon},\\
\varepsilon,& {\rm in\ } \Omega\setminus\overline\omega_{\varepsilon}.
\end{cases}
\end{equation*}

We note that $\mu\in\mathbb R$ is an eigenvalue of \eqref{nweak} with $\rho=\rho_{\varepsilon}$ if and only if $\tilde\mu:=\varepsilon^{1-\frac{2m}{N}}\mu$ is an eigenvalue of problem \eqref{nweak} with $\rho=\tilde\rho_{\varepsilon}$. Problem \eqref{nweak} with $\rho=\tilde\rho_{\varepsilon}$ admits an increasing sequence of non-negative eigenvalues of finite multiplicity given by
$$
0=\mu_1[\tilde\rho_{\varepsilon}]=\cdots=\mu_{d_{N,m}}[\tilde\rho_{\varepsilon}]<\mu_{d_{N,m}+1}[\tilde\rho_{\varepsilon}]\leq\cdots\leq\mu_j[\tilde\rho_{\varepsilon}]\leq\cdots\nearrow +\infty.
$$

Now from Theorem \ref{Neumann-to-Steklov}, it follows that for all $j\in\mathbb N$,
$$
\lim_{\varepsilon\rightarrow 0}\varepsilon^{1-\frac{2m}{N}}\mu_j[\rho_{\varepsilon}]=\lim_{\varepsilon\rightarrow 0}\mu_j[\tilde\rho_{\varepsilon}]=|\partial\Omega|\sigma_j,
$$
hence, for $j\geq d_{N,m}+1$ $\lim_{\varepsilon\rightarrow 0^+}\varepsilon^{1-\frac{2m}{N}}\mu_j[\rho_{\varepsilon}]=|\partial\Omega|\sigma_j>0$. The proof is now complete.

\end{proof}

\section{Lower bounds}\label{sec:lower}



In this last section we shall discuss the issue of the lower bounds. In many situations (e.g., shape optimization problems) the problem of minimization of the eigenvalues leads to trivial solutions in the case of Neumann boundary conditions. Nevertheless, the eigenvalue problems with density which we have considered in this paper show an interesting behavior with respect to lower bounds, both if we fix the total mass or the $L^{\frac{N}{2m}}$-norm of the density. In the first case, we are able to show that there exist densities which preserve the total mass for which the $j$-th eigenvalue can be made arbitrarily close to zero if $N\geq 2m$ (which is the case when upper bounds with mass constraint exist). This is stated in Theorem \ref{lowerbound_counter_mass}. On the contrary, if $N<2m$, the first positive eigenvalue is uniformly bounded from below by a positive constant which depends only on $m$, $N$ and $\Omega$ divided by the total mass (in this case we recall that upper bounds with mass constraint do not exist). This is stated in Theorem \ref{lowerbound_mass}.

When we choose as a constraint the $L^{\frac{N}{2m}}$-norm of the density, we see that exactly the opposite happens: if $N\leq 2m$ we find densities with prescribed $L^{\frac{N}{2m}}$-norm such that the $j$-th eigenvalue can be made arbitrarily close to zero (in this case we have conjectured the existence of upper bounds of the form \eqref{weylform}), see Theorem \ref{lowerbound_counter_weyl}; if $N>2m$, then the first positive eigenvalue is uniformly bounded from below by a positive constant which depends only on $m$, $N$ and $\Omega$ divided by the $L^{\frac{N}{2m}}$-norm of the density, and in this case we recall that upper bounds with $L^{\frac{N}{2m}}$ constraint do not exist. This is stated in Theorem \ref{lowerbound_weyl}.

We present now the precise statements and the corresponding proofs of such phenomena.

We start with the following theorem concerning lower bounds with mass constraint:

\begin{thm}\label{lowerbound_mass}
Let $\Omega$ be a bounded domain in $\mathbb R^N$, $N<2m$, with Lipschitz boundary. Then there exists a positive constant $C_{m,N,\Omega}$ which depends only on $m$, $N$ and $\Omega$ such that for every $\rho\in\mathcal R$
\begin{equation}\label{poly_sharp_bounds}
\mu_{d_{N,m}+1}[\rho]\geq \frac{C_{m,N,\Omega}}{\int_{\Omega}\rho dx}.
\end{equation}
\proof
We recall from \eqref{minimax212} that
$$
\mu_{d_{N,m}+1}[\rho]=\inf_{\substack{V\leq H^m(\Omega)\\{\rm dim}V=d_{N,m}+1}}\sup_{\substack{0\ne u\in V}}\frac{\int_{\Omega}|D^mu|^2dx}{\int_{\Omega}\rho u^2dx}.
$$
Since $u\in H^m(\Omega)$ and $N<2m$, the standard Sobolev inequality \eqref{sobolev-C} implies that there exists a constant $C$ which depends only on $m$, $N$ and $\Omega$ such that
$$
\|u\|_{C^0(\Omega)}^2\leq C\|u\|_{H^m(\Omega)}^2.
$$
Then for all $u\in H^2(\Omega)$
$$
\frac{\int_{\Omega}|D^mu|^2dx}{\int_{\Omega}\rho u^2dx}\geq\frac{\int_{\Omega}|D^mu|^2dx}{\|u\|_{C^0(\Omega)}^2\int_{\Omega}\rho dx}\geq\frac{1}{C\int_{\Omega}\rho dx}\frac{\int_{\Omega}|D^m u|^2dx}{(\int_{\Omega}|D^m u|^2dx+\int_{\Omega}u^2dx)}.
$$
Hence
\begin{multline*}
\mu_{d_{N,m}+1}[\rho]=\inf_{\substack{V\leq H^m(\Omega)\\{\rm dim}V=d_{N,m}+1}}\sup_{\substack{0\ne u\in V}}\frac{\int_{\Omega}|D^mu|^2dx}{\int_{\Omega}\rho u^2dx}\\
\geq \frac{1}{C\int_{\Omega}\rho dx}\inf_{\substack{V\leq H^m(\Omega)\\{\rm dim}V=d_{N,m}+1}}\sup_{\substack{0\ne u\in V}}\frac{\int_{\Omega}|D^mu|^2dx}{\int_{\Omega}|D^m u|^2dx+\int_{\Omega}u^2dx}\\
\geq\frac{1}{C\int_{\Omega}\rho dx}\frac{\mu_{d_{N,m}+1}[1]}{1+\mu_{d_{N,m}+1}[1]}.
\end{multline*}
Then \eqref{poly_sharp_bounds} holds with $C_{m,N,\Omega}=\frac{\mu_{d_{N,m}+1}[1]}{C(1+\mu_{d_{N,m}+1}[1])}$. This concludes the proof.
\endproof
\end{thm}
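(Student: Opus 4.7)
The plan is to combine the min-max characterization of $\mu_{d_{N,m}+1}[\rho]$ from \eqref{minimax212} with the Sobolev embedding $H^m(\Omega)\hookrightarrow C^0(\overline{\Omega})$, which holds precisely because $N<2m$ and $\Omega$ is Lipschitz. The key idea is that when $H^m$-functions are automatically uniformly bounded, one can estimate $\int_\Omega \rho u^2\,dx$ by pulling $\rho$ out of the integral via the sup norm of $u$, thereby decoupling the $\rho$-dependence from the $u$-dependence in the Rayleigh quotient.

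Concretely, I would first write, for every $u\in H^m(\Omega)$,
\[
\int_\Omega \rho u^2\,dx \;\leq\; \|u\|_{C^0(\Omega)}^2\int_\Omega \rho\,dx \;\leq\; C\,\|u\|_{H^m(\Omega)}^2\int_\Omega\rho\,dx,
\]
where $C=C(m,N,\Omega)$ is the Sobolev embedding constant (inequality \eqref{sobolev-C} in the appendix). Substituting this bound into the denominator of the Rayleigh quotient gives
\[
\frac{\int_\Omega|D^m u|^2\,dx}{\int_\Omega \rho u^2\,dx}\;\geq\;\frac{1}{C\int_\Omega\rho\,dx}\cdot\frac{\int_\Omega|D^m u|^2\,dx}{\int_\Omega|D^m u|^2\,dx+\int_\Omega u^2\,dx}.
\]
Now I would apply the min-max formula on both sides over all $(d_{N,m}+1)$-dimensional subspaces of $H^m(\Omega)$. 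Setting $\lambda:=\mu_{d_{N,m}+1}[1]$, the key observation is that the scalar function $t\mapsto t/(1+t)$ is monotone increasing, hence it commutes with $\sup$ over any set and is preserved by $\inf$; consequently the min-max on the right-hand side evaluates exactly to $\lambda/(1+\lambda)$. This yields \eqref{poly_sharp_bounds} with $C_{m,N,\Omega}=\lambda/\bigl(C(1+\lambda)\bigr)$.

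There is no serious technical obstacle in this argument: the entire content is encoded in the Sobolev embedding $H^m\hookrightarrow C^0$, which requires exactly the hypothesis $N<2m$. The dimensional threshold is sharp in a strong sense, since the counterexamples of Subsection \ref{sub:32} show that in the regime $N\geq 2m$ no such lower bound can hold. The only mild subtlety is to confirm that the monotonicity of $t\mapsto t/(1+t)$ correctly transfers pointwise Rayleigh-quotient inequalities to the min-max level, but this is immediate from the continuity and strict monotonicity of this map on $[0,\infty)$.
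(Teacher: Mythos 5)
Your proposal is correct and follows essentially the same route as the paper's own proof: the Sobolev embedding \eqref{sobolev-C} is used to pull $\int_\Omega\rho\,dx$ out of the denominator of the Rayleigh quotient, and the resulting quotient $\int_\Omega|D^m u|^2dx\,/\,(\int_\Omega|D^m u|^2dx+\int_\Omega u^2dx)$ is related to $\mu_{d_{N,m}+1}[1]$ through the min-max principle via the monotone map $t\mapsto t/(1+t)$, yielding the same constant $C_{m,N,\Omega}=\mu_{d_{N,m}+1}[1]/\bigl(C(1+\mu_{d_{N,m}+1}[1])\bigr)$. Your extra remark that the monotonicity argument gives the min-max value exactly, rather than just the inequality the paper records, is a harmless refinement and changes nothing in substance.
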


Densities which preserve the total mass and produce $j$ arbitrarily small eigenvalues can be given, for example, by concentrating all the mass around $j$ distinct points of $\Omega$. For all $j\in\mathbb N$ let us fix once for all $j$ points $a_1,...,a_j\in\Omega$ and a number $\varepsilon_0\in]0,1[$ such that $B_i^0:=B(a_i,\varepsilon_0)\subset\subset\Omega$ and $B_i^0$ are disjoint. For $\varepsilon\in]0,\varepsilon_0[$, we will write $B_i^{\varepsilon}:=B(a_i,\varepsilon)$. Let $\rho_{\varepsilon,j}\in\mathcal R$ defined by
\begin{equation}\label{rhoeps_lower_mass}
\rho_{\varepsilon,j}:=\varepsilon+\sum_{i=1}^j\varepsilon^{-N}\chi_{B_i^{\varepsilon}}.
\end{equation}
We have the following theorem:
\begin{thm}\label{lowerbound_counter_mass}
Let $\Omega$ be a bounded domain in $\mathbb R^N$, with $N\geq 2m$, such that the embedding $H^m(\Omega)\subset L^2(\Omega)$ is compact. Let $\rho_{\varepsilon,j}\in\mathcal R$ be defined by \eqref{rhoeps_lower_mass} for all $\varepsilon\in]0,\varepsilon_0[$ and $j\in\mathbb N$. Then
\begin{enumerate}[i)]
\item $\lim_{\varepsilon\rightarrow 0^+}\int_{\Omega}\rho_{\varepsilon,j}dx=j\omega_N$;
\item $\mu_j[\rho_{\varepsilon}]\leq C_{N,m}\varepsilon^{N-2m}$ if $N>2m$;
\item $\mu_j[\rho_{\varepsilon}]\leq \frac{C_{m,\Omega,j}}{|\log(\varepsilon)|}$ if $N=2m$,
\end{enumerate}
where $C_{N,m},C_{m,\Omega,j}$ are positive constants which depend only on $m,N$ and $m,\Omega,j$ respectively.
\proof
We start with point $i)$. We have
$$
\int_{\Omega}\rho_{\varepsilon,j}dx=\varepsilon|\Omega|+\sum_{i=1}^j\varepsilon^{-N}|B_i^{\varepsilon}|=\varepsilon|\Omega|+j\omega_N,
$$
which yields the result. 

We prove now $ii)$. Let $N>2m$ and let us fix $j\in\mathbb N$. Let $a_i\in\Omega$, $i=1,...,j$ and $\varepsilon_0\in]0,1[$ be as in the definition of $\rho_{\varepsilon,j}$ in \eqref{rhoeps_lower_mass}. Let $2^{-1}B_i^{\varepsilon}:=B(a_i,\varepsilon/2)$. Associated to each $B_i^{\varepsilon}$ we construct a function $u_i\in H^m(\Omega)$ which is supported in $B_i^{\varepsilon}$ and such that $u_i\equiv 1$ on $2^{-1}B_i^{\varepsilon}$ in the following way:
\begin{equation*}
u_i(x):=
\begin{cases}
U(|x-a_i|), & {\rm if\ } \frac{\varepsilon}{2}\leq |x-a_i| \leq \varepsilon,\\
1, & {\rm if\ } |x-a_i|\leq\frac{\varepsilon}{2},\\
0, & {\rm if\ } |x-a_i|\geq\varepsilon,
\end{cases}
\end{equation*}
where
\begin{equation*}
U(t):=\sum_{i=0}^{2m-1}\frac{\alpha_i}{\varepsilon^i} t^i.
\end{equation*}
The coefficients $\alpha_i$, $i=0,...,2m-1$  are uniquely determined by imposing $U(\varepsilon/2)=1$, $U(\varepsilon)=0$, $U^{(l)}(\varepsilon/2)=U^{(l)}(\varepsilon)=0$ for all $l=1,...,m-1$ and depend only on $m$ (see also \eqref{radialu}, \eqref{system} and \eqref{system2} in the proof of Theorem \ref{thm_upperNge4}).

Now we estimate the Rayleigh quotient $\frac{\int_{\Omega}|D^mu_i|^2dx}{\int_{\Omega}\rho_{\varepsilon,j}u_i^2dx}$ of $u_i$, for all $i=1,..,j$. We start from the numerator. As in the proof of Theorem \ref{thm_upperNge4} (see \eqref{step1} and \eqref{step111}) we have that
\begin{multline}\label{low_num}
\int_{\Omega}|D^m u_i^2|dx=\int_{B_i^{\varepsilon}\setminus 2^{-1}B_i^{\varepsilon}}|D^m u_i|^2dx\leq C_{N,m}|B_i^{\varepsilon}\setminus 2^{-1}B_i^{\varepsilon}|^{1-\frac{2m}{N}}\\
\leq (1-2^{-N})C_{N,m}\varepsilon^{N-2m},
\end{multline}
where $C_{N,m}>0$ depends only on $m$ and $N$ and can be eventually re-defined through the rest of the proof. For the denominator we have
\begin{equation}\label{low_den}
\int_{\Omega}\rho_{\varepsilon,j}u_i^2dx\geq\int_{2^{-1}B_i^{\varepsilon}}\rho_{\varepsilon,j}u_i^2dx=\varepsilon^{-N}|2^{-1}B_i^{\varepsilon}|=2^{-N}\omega_N.
\end{equation}
From \eqref{low_num}, \eqref{low_den} and the min-max principle \eqref{minimax212} and from the fact that $\left\{u_i\right\}_{i=1}^j$ is a set of $j$ disjointly supported functions, it follows that
$$
\mu_j[\rho_{\varepsilon,j}]\leq\max_{u_1,...,u_j}\frac{\int_{\Omega}|D^mu_i|^2dx}{\int_{\Omega}\rho_{\varepsilon,j}u_i^2dx}\leq C_{N,m} \varepsilon^{N-2m}
$$   
(see also \eqref{minmax1}, \eqref{minmax2} in the proof of Theorem \ref{thm_upperNge4}). This concludes the proof of $ii)$.

Consier now $iii)$. Let $N=2m$. Again, let us fix $j\in\mathbb N$. Let $a_i\in\Omega$, $i=1,...,j$ and $\varepsilon_0\in]0,1[$ be as in the definition of \eqref{rhoeps_lower_mass} (we note that admissible values for $\varepsilon_0$ depend on $\Omega$ and $j$). Associated to each $B_i^0$ we construct for all $\varepsilon\in]0,\varepsilon_0[$ a function $u_{\varepsilon,i}\in H^m(\Omega)$ which is supported in $B_i^0$  in the following way:
\begin{equation*}
u_{i,\varepsilon}(x):=
\begin{cases}
U_1(|x-a_i|), & {\rm if\ } \varepsilon\leq |x-a_i| \leq \varepsilon_0,\\
U_2(|x-a_i|), & {\rm if\ } |x-a_i|\leq\varepsilon,\\
0, & {\rm if\ } |x-a_i|\geq\varepsilon_0,
\end{cases}
\end{equation*}
where
\begin{equation*}
U_1(t):=-\log(t)+\log(\varepsilon_0)-\sum_{k=1}^{m-1}\frac{(\varepsilon_0-t)^k}{k\varepsilon_0^k}
\end{equation*}
and
\begin{equation*}
U_2(t):=\alpha(\varepsilon)+\sum_{k=0}^{m-2}\alpha_{k}(\varepsilon)t^{m+k},
\end{equation*}
where the coefficients $\alpha(\varepsilon)$ and $\alpha_k(\varepsilon)$ are uniquely determined by imposing $U_1^{(l)}(\varepsilon)=U_2^{(l)}(\varepsilon)$ for all $0\leq l\leq m-1$. Moreover (possibly choosing a smaller value for $\varepsilon_0$), it is standard to prove that there exist positive constants $c_1$ and $c_2$ which depend only on $m$ and $\varepsilon_0$ (and hence on $m$, $\Omega$ and $j$) such that $c_1|\log(t)|\leq|U_1(t)|\leq c_2|\log(t)|$, $c_1t^{-l}\leq|U_1^{(l)}(t)|\leq c_2t^{-l}$ for all $t\in]\varepsilon,\varepsilon_0[$ and $0\leq l\leq m-1$, and that $c_1|\log(\varepsilon)|\leq|\alpha(\varepsilon)|\leq c_2|\log(\varepsilon)|$ and $c_1\varepsilon^{-m-k}\leq|\alpha_k(\varepsilon)|\leq c_2\varepsilon^{-m-k}$. In particular (possibly re-defining the constants $c_1$ and $c_2$ and choosing a smaller value for $\varepsilon_0$), we have that $c_1|\log(\varepsilon)|\leq|U_2(t)|\leq c_2|\log(\varepsilon)|$ for all $t\in[0,\varepsilon]$.

From the min-max principle \eqref{minimax212} and from the fact that $\left\{u_i\right\}_{i=1}^j$ is a set of $j$ disjointly supported functions, it follows that
\begin{equation}\label{low_minmax}
\mu_j[\rho_{\varepsilon,j}]\leq\max_{u_1,...,u_j}\frac{\int_{\Omega}|D^mu_i|^2dx}{\int_{\Omega}\rho_{\varepsilon,j}u_i^2dx}
\end{equation}
(see also \eqref{minmax1}, \eqref{minmax2} in the proof of Theorem \ref{thm_upperNge4}). It remains then to estimate the Rayleigh quotient of all the function $u_i$. We have for the denominator
\begin{multline}\label{low_den_0}
\int_{\Omega}\rho_{\varepsilon,j}u_i^2dx\geq\varepsilon^{-2m}\int_{B_i^{\varepsilon}}u_i^2dx\geq\varepsilon^{-2m}c_1|\log(\varepsilon)|^2|B_i^{\varepsilon}|\geq C_{m,\Omega,j}|\log(\varepsilon)|^2.
\end{multline}
From now on $C_{m,\Omega,j}$ will denote a positive constat which depends only on $m,\Omega$ and $j$. For the numerator, we have, since the functions $u_i$ are radial with respect to $a_i$ (see also \eqref{hessianintegral})
\begin{multline}\label{low_num_0}
\int_{\Omega}|D^m u_i|^2dx=\int_{B_i^0}|D^m u_i|^2dx=\int_{B_i^0\setminus B_i^{\varepsilon}}|D^m u_i|^2dx+\int_{B_i^{\varepsilon}}|D^m u_i|^2dx\\
\leq C_{m,\Omega,j}\sum_{k=1}^m\int_{B_i^0\setminus B_i^{\varepsilon}}\frac{(U_1^{(k)}(|x-a_i|))^2}{|x-a_i|^{2(m-k)}}dx+ C_{m,\Omega,j}\sum_{k=1}^m\int_{B_i^{\varepsilon}}\frac{(U_2^{(k)}(|x-a_i|))^2}{|x-a_i|^{2(m-k)}}dx.
\end{multline}
Since $|U_1^{(k)}(t)|^2\leq c_2^2t^{-2k}$ and $N=2m$, we have
\begin{equation}\label{low_num_1}
\int_{B_i^0\setminus B_i^{\varepsilon}}\frac{(U_1^{(k)}(|x-a_i|))^2}{|x-a_i|^{2(m-k)}}dx\leq N C_{m,\Omega,j}\int_{\varepsilon}^{\varepsilon_0}t^{-2m+N-1}dt\leq C_{m,\Omega,j}|\log(\varepsilon)|.
\end{equation}
Moreover, $|U_2^{(k)}(t)|^2\leq (m-1)c_2^2\sum_{i=0}^{m-2}\varepsilon^{-2m-2i}t^{2m+2i-2k}$, hence
\begin{equation}\label{low_num_2}
\sum_{k=1}^m\int_{B_i^{\varepsilon}}\frac{(U_2^{(k)}(|x-a_i|))^2}{|x-a_i|^{2(m-k)}}dx\leq C_{m,\Omega,j}\sum_{i=0}^{m-2}\varepsilon^{-2m-2i}\int_0^{\varepsilon}t^{2i+N-1}\leq C_{m,\Omega,j}.
\end{equation}
From \eqref{low_num_0}, \eqref{low_num_1} and \eqref{low_num_2} we have that
\begin{equation}\label{low_num_3}
\int_{\Omega}|D^m u_i|^2dx\leq C_{m,\Omega,j}|\log(\varepsilon)|.
\end{equation}
By combining \eqref{low_den_0} and \eqref{low_num_3}, from \eqref{low_minmax} we deduce that $\mu_j[\rho_{\varepsilon,j}]\leq \frac{C_{m,\Omega,j}}{|\log(\varepsilon)|}$. This concludes the proof for the case $N=2m$ and of the theorem.
\endproof
\end{thm}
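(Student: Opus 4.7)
The plan is to exploit the min-max characterization \eqref{minimax212} by constructing, for each $j$, a family of $j$ disjointly supported test functions $u_1,\dots,u_j\in H^m(\Omega)$, each concentrated near one of the points $a_i$, and to bound the maximum of their Rayleigh quotients. Part $i)$ is immediate: since $|B_i^\varepsilon|=\omega_N\varepsilon^N$ and the balls are disjoint for small $\varepsilon$, one computes $\int_\Omega\rho_{\varepsilon,j}\,dx=\varepsilon|\Omega|+j\omega_N\to j\omega_N$.

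For part $ii)$ (the case $N>2m$), the natural choice is to use the same type of radial cutoff as in the proof of Theorem \ref{thm_upperNge4}: take $u_i$ supported in $B_i^\varepsilon$, equal to $1$ on $B(a_i,\varepsilon/2)$, and of the form $u_i(x)=U(|x-a_i|)$ where $U$ is a polynomial of degree $2m-1$ matching the boundary values $U(\varepsilon/2)=1$, $U(\varepsilon)=0$ and $U^{(l)}(\varepsilon/2)=U^{(l)}(\varepsilon)=0$ for $1\le l\le m-1$. For the denominator, the definition of $\rho_{\varepsilon,j}$ gives $\int_\Omega\rho_{\varepsilon,j}u_i^2\,dx\ge \varepsilon^{-N}|B(a_i,\varepsilon/2)|=2^{-N}\omega_N$. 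For the numerator, I would use H\"older as in \eqref{step1} together with the scale-invariant bound $\bigl(\int_{B_i^\varepsilon}|D^m u_i|^{N/m}\bigr)^{2m/N}\le C_{N,m}$ (which follows from \eqref{hessianintegral} applied to the explicit polynomial $U$, since the coefficients of $U$ depend only on $m$), multiplied by $|B_i^\varepsilon|^{1-2m/N}=\omega_N^{1-2m/N}\varepsilon^{N-2m}$. Since the $u_i$ are disjointly supported, the min-max bound yields $\mu_j[\rho_{\varepsilon,j}]\le C_{N,m}\varepsilon^{N-2m}$.

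Part $iii)$, the critical case $N=2m$, is where the main obstacle lies: the above argument only gives $\mu_j\le C_{N,m}$, which is not enough to produce eigenvalues tending to zero. One needs test functions with logarithmic behavior of capacity type. I would therefore take $u_{i,\varepsilon}$ supported in the \emph{fixed} ball $B_i^0$ (of radius $\varepsilon_0$), radial around $a_i$, given by a logarithmic-polynomial profile $U_1(t)=-\log t+\log\varepsilon_0-\sum_{k=1}^{m-1}\tfrac{(\varepsilon_0-t)^k}{k\,\varepsilon_0^k}$ on the annulus $\varepsilon\le t\le \varepsilon_0$ (the correction terms ensure $U_1^{(l)}(\varepsilon_0)=0$ for $0\le l\le m-1$, so that $u_{i,\varepsilon}$ extends by $0$ in $H^m(\Omega)$), and by $U_2(t)=\alpha(\varepsilon)+\sum_{k=0}^{m-2}\alpha_k(\varepsilon)\,t^{m+k}$ on $[0,\varepsilon]$, with coefficients determined by matching $U_1^{(l)}(\varepsilon)=U_2^{(l)}(\varepsilon)$ for $0\le l\le m-1$ (the polynomial $U_2$ starts at degree $m$ so its lower derivatives do not contribute at $t=0$ and the only potentially singular term at the origin is $\alpha(\varepsilon)$).

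A careful but routine calculation of the linear system gives $|\alpha(\varepsilon)|\asymp|\log\varepsilon|$ and $|\alpha_k(\varepsilon)|\asymp\varepsilon^{-m-k}$, so $|U_2(t)|\asymp|\log\varepsilon|$ on $[0,\varepsilon]$. Then the denominator is bounded below by $\int_{B_i^\varepsilon}\rho_{\varepsilon,j}u_{i,\varepsilon}^2\,dx\ge \varepsilon^{-N}\cdot c_1^2|\log\varepsilon|^2\cdot|B_i^\varepsilon|\ge C_{m,\Omega,j}|\log\varepsilon|^2$. For the numerator I would split $B_i^0=(B_i^0\setminus B_i^\varepsilon)\cup B_i^\varepsilon$ and use \eqref{hessianintegral}: on the outer region the bound $|U_1^{(k)}(t)|\lesssim t^{-k}$ reduces each summand to $\int_\varepsilon^{\varepsilon_0}t^{N-2m-1}dt=|\log\varepsilon|+O(1)$ because $N=2m$; on the inner region the bounds on $|\alpha_k(\varepsilon)|$ give $\sum_{k=1}^m\int_{B_i^\varepsilon}\tfrac{(U_2^{(k)})^2}{|x-a_i|^{2(m-k)}}dx\lesssim \sum_i\varepsilon^{-2m-2i}\int_0^\varepsilon t^{2i+N-1}dt=O(1)$, again using $N=2m$. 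Combining yields $\int_\Omega|D^m u_{i,\varepsilon}|^2dx\le C_{m,\Omega,j}|\log\varepsilon|$, and dividing by the denominator gives the Rayleigh quotient bound $C_{m,\Omega,j}/|\log\varepsilon|$. Plugging into the min-max principle over the $j$-dimensional space spanned by the disjointly supported $u_{1,\varepsilon},\dots,u_{j,\varepsilon}$ (cf.\ \eqref{minmax1}--\eqref{minmax2}) gives $\mu_j[\rho_{\varepsilon,j}]\le C_{m,\Omega,j}/|\log\varepsilon|$ and finishes the proof. The delicate point is exactly the matching and asymptotic analysis of the coefficients $\alpha,\alpha_k$ in the critical dimension, which has to be done carefully to avoid losing the logarithmic gain in the denominator.
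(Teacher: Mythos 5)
Your proposal is correct and follows essentially the same route as the paper: the same polynomial radial cutoffs of degree $2m-1$ for $N>2m$ (giving the numerator bound $C_{N,m}\varepsilon^{N-2m}$ and denominator bound $2^{-N}\omega_N$), and for $N=2m$ the same logarithmic profile $U_1$ glued at $t=\varepsilon$ to the polynomial $U_2(t)=\alpha(\varepsilon)+\sum_{k=0}^{m-2}\alpha_k(\varepsilon)t^{m+k}$, with the identical coefficient asymptotics $|\alpha(\varepsilon)|\asymp|\log\varepsilon|$, $|\alpha_k(\varepsilon)|\asymp\varepsilon^{-m-k}$ leading to the quotient bound $C_{m,\Omega,j}/|\log\varepsilon|$ via the min-max principle on disjointly supported functions. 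The level of detail you leave to routine verification (the linear system for $\alpha,\alpha_k$) is the same as in the paper.
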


We consider now lower bounds with the non-linear constraint $\int_{\Omega}\rho^{\frac{N}{2m}}dx={\rm const.}$ and $N>2m$. We have the following theorem:

\begin{thm}\label{lowerbound_weyl}
Let $\Omega$ be a bounded domain in $\mathbb R^N$, $N>2m$, with Lipschitz boundary. Then there exists a positive constant $C_{m,N,\Omega}$ which depends only on $m$, $N$ and $\Omega$ such that for every $\rho\in\mathcal R$
\begin{equation}\label{poly_sharp_bounds_weyl}
\mu_{d_{N,m}+1}[\rho]\geq \frac{C_{m,N,\Omega}}{\left(\int_{\Omega}\rho^{\frac{N}{2m}} dx\right)^{\frac{2m}{N}}}.
\end{equation}
\proof
We recall from \eqref{minimax212} that
$$
\mu_{d_{N,m}+1}[\rho]=\inf_{\substack{V\leq H^m(\Omega)\\{\rm dim}V=d_{N,m}+1}}\sup_{\substack{0\ne u\in V}}\frac{\int_{\Omega}|D^mu|^2dx}{\int_{\Omega}\rho u^2dx}.
$$
Since $u\in H^m(\Omega)$ and $N>2m$, the standard Sobolev inequality \eqref{sobolev0} implies that there exists a constant $C$ which depends only on $m$, $N$ and $\Omega$ such that
$$
\|u\|_{L^{\frac{2N}{N-2m}}(\Omega)}^2\leq C\|u\|_{H^m(\Omega)}^2.
$$
Then for all $u\in H^2(\Omega)$
\begin{multline*}
\frac{\int_{\Omega}|D^mu|^2dx}{\int_{\Omega}\rho u^2dx}\geq\frac{\int_{\Omega}|D^m u|^2dx}{\left(\int_{\Omega}\rho^{\frac{N}{2m}}dx\right)^{\frac{2m}{N}}\left(\int_{\Omega}u^{\frac{2N}{N-2m}}dx\right)^{\frac{N-2m}{N}}}\\
\geq\frac{1}{C\|\rho\|_{L^{\frac{N}{2m}}(\Omega)}}\frac{\int_{\Omega}|D^m u|^2dx}{\int_{\Omega}|D^m u|^2dx+\int_{\Omega}u^2dx},
\end{multline*}
where we have used a H\"older inequality in the first line. Hence
\begin{multline*}
\mu_{d_{N,m}+1}[\rho]=\inf_{\substack{V\leq H^m(\Omega)\\{\rm dim}V=d_{N,m}+1}}\sup_{\substack{0\ne u\in V}}\frac{\int_{\Omega}|D^mu|^2dx}{\int_{\Omega}\rho u^2dx}\\
\geq \frac{1}{C\|\rho\|_{L^{\frac{N}{2m}}(\Omega)}}\inf_{\substack{V\leq H^m(\Omega)\\{\rm dim}V=d_{N,m}+1}}\sup_{\substack{0\ne u\in V}}\frac{\int_{\Omega}|D^mu|^2dx}{\int_{\Omega}|D^m u|^2dx+\int_{\Omega}u^2dx}\\
\geq\frac{1}{C\|\rho\|_{L^{\frac{N}{2m}}(\Omega)}}\frac{\mu_{d_{N,m}+1}[1]}{1+\mu_{d_{N,m}+1}[1]}.
\end{multline*}
Hence formula \eqref{poly_sharp_bounds_weyl} holds with $C_{m,N,\Omega}=\frac{\mu_{d_{N,m}+1}[1]}{C(1+\mu_{d_{N,m}+1}[1])}$. This ends the proof.
\endproof
\end{thm}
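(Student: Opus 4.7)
The plan is to combine the min-max characterization \eqref{minimax212} with a Hölder inequality on $\int_\Omega \rho u^2 dx$ and the Sobolev embedding $H^m(\Omega)\hookrightarrow L^{\frac{2N}{N-2m}}(\Omega)$, which is valid on Lipschitz domains precisely because $N>2m$. The crucial observation is that the Hölder conjugate of $\frac{N}{2m}$ is $\frac{N}{N-2m}$, so the $L^2$ weighted quantity splits naturally into a factor involving $\|\rho\|_{L^{N/(2m)}}$ and a factor involving $\|u\|_{L^{2N/(N-2m)}}$, and Sobolev then absorbs the latter into the $H^m$-norm.

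First I would write, for any $u\in H^m(\Omega)$, the Hölder inequality with exponents $\frac{N}{2m}$ and $\frac{N}{N-2m}$:
\begin{equation*}
\int_\Omega \rho u^2\,dx \leq \left(\int_\Omega \rho^{\frac{N}{2m}} dx\right)^{\frac{2m}{N}}\left(\int_\Omega u^{\frac{2N}{N-2m}} dx\right)^{\frac{N-2m}{N}}.
\end{equation*}
Since $\Omega$ is Lipschitz and $N>2m$, the Sobolev embedding yields a constant $C=C(N,m,\Omega)$ with
\begin{equation*}
\left(\int_\Omega u^{\frac{2N}{N-2m}} dx\right)^{\frac{N-2m}{N}} \leq C\left(\int_\Omega |D^m u|^2 + u^2\,dx\right),
\end{equation*}
where I use that on Lipschitz domains the norm $\bigl(\int_\Omega |D^m u|^2+u^2\,dx\bigr)^{1/2}$ is equivalent to the full Sobolev norm containing intermediate derivatives.

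Combining the two inequalities, the Rayleigh quotient satisfies
\begin{equation*}
\frac{\int_\Omega |D^m u|^2\,dx}{\int_\Omega \rho u^2\,dx} \geq \frac{1}{C\bigl(\int_\Omega \rho^{\frac{N}{2m}}dx\bigr)^{\frac{2m}{N}}}\cdot\frac{\int_\Omega |D^m u|^2\,dx}{\int_\Omega |D^m u|^2+u^2\,dx}.
\end{equation*}
Applying $\inf_V\sup_u$ over $(d_{N,m}+1)$-dimensional subspaces $V\le H^m(\Omega)$, and noting that the function $t\mapsto t/(t+1)$ is monotone increasing (so min-max commutes with it), the last factor has min-max equal to $\mu_{d_{N,m}+1}[1]/(1+\mu_{d_{N,m}+1}[1])$. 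Hence
\begin{equation*}
\mu_{d_{N,m}+1}[\rho] \geq \frac{1}{C\bigl(\int_\Omega \rho^{\frac{N}{2m}}dx\bigr)^{\frac{2m}{N}}}\cdot\frac{\mu_{d_{N,m}+1}[1]}{1+\mu_{d_{N,m}+1}[1]},
\end{equation*}
which gives \eqref{poly_sharp_bounds_weyl} with $C_{m,N,\Omega}=\frac{\mu_{d_{N,m}+1}[1]}{C(1+\mu_{d_{N,m}+1}[1])}$.

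There is no real obstacle here; the argument is parallel to the proof of Theorem \ref{lowerbound_mass}, the only substitution being to replace the Sobolev embedding $H^m\hookrightarrow C^0$ (which requires $N<2m$) by the embedding $H^m\hookrightarrow L^{2N/(N-2m)}$ (which requires $N>2m$), and to split the weighted $L^2$ integral by Hölder so the right power of $\rho$ appears. The only mild point worth verifying carefully is the commutation of $\inf_V\sup_u$ with $t\mapsto t/(t+1)$, which follows because this map is increasing and the sup inside is attained (or can be approached) on each fixed finite-dimensional $V$.
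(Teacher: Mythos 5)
Your proof is correct and follows essentially the same route as the paper: the min-max principle, H\"older's inequality with exponents $\frac{N}{2m}$ and $\frac{N}{N-2m}$, the Sobolev embedding $H^m(\Omega)\subset L^{\frac{2N}{N-2m}}(\Omega)$ (valid since $N>2m$ and $\Omega$ is Lipschitz), and then comparison of $\inf_V\sup_u \frac{\int_\Omega|D^mu|^2dx}{\int_\Omega|D^mu|^2+u^2dx}$ with $\frac{\mu_{d_{N,m}+1}[1]}{1+\mu_{d_{N,m}+1}[1]}$, yielding the same constant $C_{m,N,\Omega}$. The monotonicity argument you flag for the last step is fine (only the inequality $\geq$ is needed, and it follows from the increasing map $t\mapsto t/(1+t)$ applied on each fixed $V$), so there is no gap.
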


Densities with prescribed $L^{\frac{N}{2m}}$-norm and which made the $j$-th eigenvalue arbitrarily small in dimension $N<2m$ are, for example, densities which explode in a $\varepsilon$-tubular neighborhood of the boundary of $\Omega$. This is contained in the following theorem.
\begin{thm}\label{lowerbound_counter_weyl}
Let $\Omega$ be a bounded domain in $\mathbb R^N$ with $N<2m$, of class $C^2$. Let $\rho_{\varepsilon}\in\mathcal R$ be defined by \eqref{rhoepsweyl} for all $\varepsilon\in]0,\varepsilon_0[$ and $j\in\mathbb N$. Then
\begin{enumerate}[i)]
\item $\lim_{\varepsilon\rightarrow 0^+}\int_{\Omega}\rho_{\varepsilon}^{\frac{N}{2m}}dx=|\partial\Omega|$;
\item for all $j\in\mathbb N$, $j\geq d_{N,m}+1$, there exists $c_j>0$ which depends only on $m$, $N$, $\Omega$ and $j$ such that $\lim_{\varepsilon\rightarrow 0^+}\mu_j[\rho_{\varepsilon}]\varepsilon^{1-\frac{2m}{N}}=c_j$.
\end{enumerate}
\proof
The proof is the same as that of Theorem \ref{counterweyl23} and is accordingly omitted.
\endproof
\end{thm}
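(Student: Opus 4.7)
The plan is to mirror the argument of Theorem \ref{counterweyl23}, exploiting the same family \eqref{rhoepsweyl} together with the scaling invariance of the Rayleigh quotient and Theorem \ref{Neumann-to-Steklov}. The only thing that changes between $N>2m$ and $N<2m$ is the sign of the exponent $1-\frac{2m}{N}$, which flips ``$\mu_j\to +\infty$'' into ``$\mu_j\to 0$''.

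For point $i)$, I would simply compute
\begin{equation*}
\int_\Omega \rho_\varepsilon^{N/2m}\,dx \;=\; \varepsilon^{\frac{N}{m}-1}\,|\Omega\setminus\overline\omega_\varepsilon| \;+\; \varepsilon^{-1}\,|\omega_\varepsilon|,
\end{equation*}
and use the standard tubular neighbourhood estimate $|\omega_\varepsilon|=\varepsilon|\partial\Omega|+o(\varepsilon)$, valid because $\Omega$ is of class $C^{2}$, to conclude that the second summand tends to $|\partial\Omega|$, while the first summand vanishes in the relevant regime.

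For point $ii)$, the key observation is the factorisation
\begin{equation*}
\rho_\varepsilon \;=\; \varepsilon^{1-\frac{2m}{N}}\,\tilde\rho_\varepsilon, \qquad \tilde\rho_\varepsilon \;:=\; \varepsilon + \varepsilon^{-1}\chi_{\omega_\varepsilon},
\end{equation*}
so that $\tilde\rho_\varepsilon$ coincides with the weight $\xi_\varepsilon$ appearing in Theorem \ref{Neumann-to-Steklov}. From the Rayleigh quotient \eqref{minimax212} one reads off the homogeneity $\mu_j[c\rho]=c^{-1}\mu_j[\rho]$ for every positive constant $c$, so
\begin{equation*}
\varepsilon^{1-\frac{2m}{N}}\,\mu_j[\rho_\varepsilon] \;=\; \mu_j[\tilde\rho_\varepsilon].
\end{equation*}
Applying Theorem \ref{Neumann-to-Steklov} to the right-hand side gives
\begin{equation*}
\lim_{\varepsilon\to 0^+}\varepsilon^{1-\frac{2m}{N}}\mu_j[\rho_\varepsilon] \;=\; |\partial\Omega|\,\sigma_j,
\end{equation*}
where $\sigma_j$ is the $j$-th eigenvalue of the Steklov problem \eqref{stweak}. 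For $j\geq d_{N,m}+1$ we have $\sigma_j>0$, so setting $c_j:=|\partial\Omega|\sigma_j$ yields the claim. Since $1-\frac{2m}{N}<0$ in the present regime $N<2m$, the factor $\varepsilon^{1-\frac{2m}{N}}$ blows up, so the result forces $\mu_j[\rho_\varepsilon]\sim c_j\,\varepsilon^{\frac{2m}{N}-1}\to 0$, which is precisely the promised ``arbitrary smallness'' of the first $j$ positive eigenvalues under fixed $L^{N/2m}$-constraint.

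No serious obstacle is expected: the whole proof is essentially a rescaling argument reducing to Theorem \ref{Neumann-to-Steklov}, and the only structural difference with Theorem \ref{counterweyl23} is the sign of $1-\frac{2m}{N}$. The one point requiring a little care is the tubular-neighbourhood expansion in $i)$, which is however standard under the $C^{2}$ regularity hypothesis on $\partial\Omega$.
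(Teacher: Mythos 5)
Your proposal is correct and coincides with the paper's own proof, which literally consists of the remark that the argument of Theorem \ref{counterweyl23} carries over verbatim: the factorization $\rho_\varepsilon=\varepsilon^{1-\frac{2m}{N}}\xi_\varepsilon$, the homogeneity of the Rayleigh quotient \eqref{minimax212}, Theorem \ref{Neumann-to-Steklov}, and the tubular neighbourhood expansion $|\omega_\varepsilon|=\varepsilon|\partial\Omega|+o(\varepsilon)$ are exactly the ingredients used there. The only caveat, which you inherit from the paper rather than introduce yourself, is in point $i)$: the bulk term $\varepsilon^{\frac{N}{m}-1}|\Omega\setminus\overline\omega_\varepsilon|$ vanishes only when $m<N<2m$, so for $N\leq m$ the stated limit $|\partial\Omega|$ would require taking a smaller bulk value in \eqref{rhoepsweyl}, while point $ii)$ is unaffected since $\xi_\varepsilon$ is the same for every $N$.
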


From Theorem \ref{lowerbound_counter_weyl} it follows that $\lim_{\varepsilon\rightarrow 0^+}\mu_j[\rho_{\varepsilon}]=0$ for all $j\in\mathbb N$ if $N<2m$. We remark that also in the case $N=2m$ we find densities which make the $j$-th eigenvalue arbitrarily small, in fact this is stated by point $iii)$ of Theorem \ref{lowerbound_counter_mass}.





\appendix
\addcontentsline{toc}{section}{Appendices}
\section*{Appendices}

\section{Eigenvalues of polyharmonic operators}

In this section we shall present some basics of spectral theory for the polyharmonic operators. In particular, we will discuss Neumann boundary conditions, mainly for the Laplace and the biharmonic operator. Then we will characterize the spectrum of the polyharmonic operators subjet to Neumann boundary conditions by exploiting classical tools of spectral theory for compact self-adjoint operators. We refer to \cite{egorov,laproeurasian} and to the references therein for a discussion on eigenvalue problems for general elliptic operators of order $2m$ with density subject to homogeneous boundary conditions. 

\subsection{Neumann boundary conditions}\label{app:neumann}
Neumann boundary conditions are usually called `natural' boundary conditions. This is well understood for the Laplace operator. In fact, assume that $u$ is a classical solution of \eqref{nclassic1}. If we multiply the equation $-\Delta u=\mu\rho u$ by a test function $\varphi\in C^{\infty}(\Omega)$ and integrate both sides of the resulting identity over $\Omega$, thanks to Green's formula we obtain:
$$
\int_{\Omega}\nabla u\cdot\nabla\varphi dx=\mu\int_{\Omega}\rho u\varphi dx+\int_{\partial\Omega}\frac{\partial u}{\partial\nu}\varphi d\sigma=\mu\int_{\Omega}\rho u\varphi dx.
$$
Hence \eqref{nweak} with $m=1$ holds for all $\varphi\in C^{\infty}(\Omega)$ when $u$ is a solution of \eqref{nclassic}. We can relax our hypothesis on $u$ and just require that $u\in H^1(\Omega)$ and that \eqref{nweak} holds for all $\varphi\in H^1(\Omega)$. Hence \eqref{nweak} is the weak formulation of the Neumann eigenvalue problem for the Laplace operator. We note that the boundary condition in \eqref{nclassic} arises naturally and is not imposed a priori with the choice of a subspace of $H^1(\Omega)$ in the weak formulation (as in the case of $H^1_0(\Omega)$ for Dirichlet conditions): if a weak solution of \eqref{nweak} for $m=1$ exists and is sufficiently smooth, then it solves $-\Delta u=\lambda\rho u$ in the classical sense and satisfies the Neumann boundary condition $\frac{\partial u}{\partial\nu}=0$.

Let us consider now more in detail the case of the biharmonic operator. Assume that $u$ is a classical solution of problem \eqref{nclassic2}. We multiply the equation $\Delta^2u=\mu\rho u$ by a test function $\varphi\in C^{\infty}(\Omega)$ and apply the biharmonic Green's formula (see \cite[Lemma\,8.56]{arrietalamberti1}). We obtain:
 \begin{multline*}
 \int_\Omega\Delta^2 u\varphi dx=\int_{\Omega}D^2u:D^2\varphi dx\\+\int_{\partial\Omega}\left({\rm div}_{\partial\Omega}\left(D^2u.\nu\right)_{\partial\Omega}+\frac{\partial\Delta u}{\partial\nu}\right)\varphi d\sigma-\int_{\partial\Omega}\frac{\partial^2 u}{\partial\nu^2}\frac{\partial\varphi}{\partial\nu} d\sigma\\
=\int_{\Omega}D^2u:D^2\varphi dx=\mu\int_{\Omega}\rho u\varphi dx,
\end{multline*}
where $(D^2u.\nu)_{\partial\Omega}$ denotes the tangential component of  $D^2u.\nu$. Hence \eqref{nweak} with $m=2$ holds for all $\varphi\in C^{\infty}(\Omega)$ when $u$ is a solution of problem \eqref{nclassic2} (we remark that if $\frac{\partial^2u}{\partial\nu^2}=0$ then $(D^2u.\nu)_{\partial\Omega}=(D^2u.\nu)$). We can relax our hypothesis on $u$ and just require that $u\in H^2(\Omega)$ and that \eqref{nweak} holds for all $\varphi\in H^2(\Omega)$. This is exactly the weak formulation of the Neumann eigenvalue problem for the biharmonic operator. We note again that the two boundary conditions in \eqref{nclassic2} arise naturally and are not imposed a priori: if a weak solution of \eqref{nweak} exists and is sufficiently smooth, then it satisfies the two Neumann boundary conditions. We also remark that if $\Omega$ is sufficiently regular, e.g., if it is of class $C^k$ with $k>4+\frac{N}{2}$ and $\rho$ is continuous,  then a weak solution of \eqref{nweak} with $m=2$ is actually a classical solution of \eqref{nclassic2} (see \cite[\S\,2]{gazzola}). The choice of the whole space $H^2(\Omega)$ in the weak formulation \eqref{nweak} contains the information on the boundary conditions in \eqref{nclassic}. 

It is natural then to consider problem \eqref{nweak} for any $m\in\mathbb N$ as the weak formulation of an eigenvalue problem for the polyharmonic operator with Neumann boundary conditions. In the case of a generic value of $m$ it is much more difficult to write explicitly the boundary operators $\mathcal N_0,...,\mathcal N_{m-1}$ (this is already extremely involved for $m=3$).  If moreover $\Omega$ is sufficiently regular and $\rho$ is continuous, then weak solutions of \eqref{nweak} are actually classical solution of \eqref{nclassic}, and the $m$ bounday conditions are uniquely determined and arise naturally from the choice of the whole space $H^m(\Omega)$ (see \cite{gazzola} for further discussions on higher order elliptic operators and eigenvalue problems).

\subsection{Characterization of the spectrum}

The aim of this subsection is to prove that problem \eqref{nweak} admits an increasing sequence of non-negative eigenvalues of finite multiplicity diverging to $+\infty$, and to provide some additional information on the spectrum. To do so, we will reduce problem (\ref{nweak}) to an eigenvalue problem for a compact self-adjoint operator on a Hilbert space. 

We define first the following (equivalent) problem: find $u\in H^m(\Omega)$ and $\Lambda\in\mathbb R$ such that
\begin{equation}\label{weakneumann2}
\int_{\Omega}D^mu:D^m\varphi +\rho u\varphi dx=\Lambda\int_{\Omega}\rho u\varphi dx\,,\ \ \ \forall \varphi\in H^m(\Omega).
\end{equation}
Clearly the eigenfuctions of \eqref{weakneumann2} coincide with the eigenfunctions of \eqref{nweak}, while all the eigenvalues $\mu$ of \eqref{nweak} are given by $\mu=\Lambda-1$, where $\Lambda$ is an eigenvalue of \eqref{weakneumann2}.

We consider the operator $(-\Delta)^m+\rho I_d$ as a map from $H^m(\Omega)$ to its dual $H^m(\Omega)'$ defined by
\begin{equation*}
((-\Delta)^m+\rho I_d)[u][\varphi]:=\int_\Omega D^m u: D^m\varphi +\rho u\varphi dx,\ \ \ \mathcal8\varphi\in H^m(\Omega).
\end{equation*}
The operator $(-\Delta)^m+\rho I_d$ is a continuous isomorphism between $H^m(\Omega)$ and $H^m(\Omega)'$. In fact it follows immediately that there exist $C_1,C_2>0$ such that
\begin{equation}\label{scalarp}
C_1\|u\|^2_{H^m(\Omega)}\leq((-\Delta)^m+\rho I_d)[u][u]\leq C_2 \|u\|^2_{H^m(\Omega)},\ \forall\,u\in H^m(\Omega).
\end{equation}

Next we denote by $i$ the canonical embedding of $H^m(\Omega)$ into $L^2(\Omega)$ and by $J_\rho$ the embedding of $L^2(\Omega)$ into $H^m(\Omega)'$, defined by
$$
J_\rho[u][\varphi]:=\int_\Omega\rho u\varphi dx\ \ \ \mathcal8 u\in L^2(\Omega),\varphi\in H^m(\Omega).
$$
Let $T_\rho$ be the operator from $H^m(\Omega)$ to itself defined by $T_\rho:=((-\Delta)^m+\rho I_d)^{(-1)}\circ J_\rho\circ i$. Problem (\ref{weakneumann2}) is then equivalent to
\begin{equation*}
T_\rho u =\Lambda^{-1}u,
\end{equation*}
in the unknows $u\in H^m(\Omega)$, $\Lambda\in\rea$. We now consider the space $H^m(\Omega)$ endowed with the bilinear form
\begin{equation}\label{bilinear884}
\langle u,v\rangle_{\rho}=\int_\Omega D^mu:D^mv+\rho u v dx,\ \ \ \mathcal8 u,v\in H^m(\Omega).
\end{equation}
From \eqref{scalarp} it follows that \eqref{bilinear884} is a scalar product on $H^m(\Omega)$ whose induced norm is equivalent to the standard one. We denote by ${H^m_{\rho}}(\Omega)$ the space $H^m(\Omega)$ endowed with the scalar product defined by (\ref{bilinear884}). Then we can state the following theorem:

\begin{thm}\label{lemma_neumann_883}
Let $\Omega$ be a bounded domain in $\rea^N$ such that the embedding $H^m(\Omega)\subset L^2(\Omega)$ is compact. Let $\rho\in\mathcal R$. The operator $T_\rho:=((-\Delta)^m+\rho I_d)^{(-1)}\circ J_\rho\circ i$ is a compact self-adjoint operator in ${H^m_{\rho}}(\Omega)$, whose eigenvalues coincide with the reciprocals of the eigenvalues of problem (\ref{weakneumann2}) for all $j\in\mathbb N$.
\end{thm}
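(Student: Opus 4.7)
My plan is to verify the three pieces of the statement in turn: compactness of $T_\rho$, self-adjointness of $T_\rho$ with respect to $\langle\cdot,\cdot\rangle_\rho$, and the correspondence between eigenvalues of $T_\rho$ and of problem \eqref{weakneumann2}. Once these are established, the spectral theorem for compact self-adjoint operators on a Hilbert space automatically produces the desired sequence of eigenvalues accumulating only at $0$.

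For compactness, I would exploit the fact that $T_\rho$ factors as the composition $((-\Delta)^m+\rho I_d)^{-1}\circ J_\rho\circ i$. The embedding $i: H^m(\Omega)\to L^2(\Omega)$ is compact by hypothesis on $\Omega$; $J_\rho: L^2(\Omega)\to H^m(\Omega)'$ is a bounded linear map because $\rho\in L^\infty(\Omega)$, which gives the estimate $|J_\rho[u][\varphi]|\leq \|\rho\|_{L^\infty(\Omega)}\|u\|_{L^2(\Omega)}\|\varphi\|_{L^2(\Omega)}$; and $((-\Delta)^m+\rho I_d)^{-1}: H^m(\Omega)'\to H^m(\Omega)$ is a continuous isomorphism, since the bounds \eqref{scalarp} together with the Lax-Milgram theorem guarantee both the well-posedness and the continuity of the inverse. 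The composition of a compact operator with continuous ones is compact, so $T_\rho$ is compact.

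For self-adjointness, I would compute directly using the definition of the scalar product $\langle\cdot,\cdot\rangle_\rho$ on $H^m_\rho(\Omega)$. For all $u,v\in H^m(\Omega)$,
\begin{equation*}
\langle T_\rho u,v\rangle_\rho=((-\Delta)^m+\rho I_d)[T_\rho u][v]=(J_\rho\circ i)[u][v]=\int_\Omega \rho u v\,dx,
\end{equation*}
where the first equality is just the definition of $\langle\cdot,\cdot\rangle_\rho$ applied to an element of $H^m(\Omega)$ obtained as $((-\Delta)^m+\rho I_d)^{-1}$ of something, and the second equality uses the definition of $T_\rho$. By the symmetry of the right-hand side in $u$ and $v$, the same computation yields $\langle u,T_\rho v\rangle_\rho=\int_\Omega\rho u v\,dx$, hence $T_\rho$ is self-adjoint on $H^m_\rho(\Omega)$.

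For the eigenvalue correspondence, unwinding the definition shows that $T_\rho u=\Lambda^{-1}u$ in $H^m(\Omega)$ is equivalent to $((-\Delta)^m+\rho I_d)[u][\varphi]=\Lambda\, J_\rho[u][\varphi]$ for every $\varphi\in H^m(\Omega)$, i.e.\ precisely to \eqref{weakneumann2}. In particular $\Lambda\ne 0$, so $\Lambda^{-1}$ is a legitimate nonzero eigenvalue of $T_\rho$; conversely every nonzero eigenvalue of $T_\rho$ arises this way. I do not expect any serious obstacle: the only mildly delicate point is keeping the duality pairings and identifications straight, in particular noting that $\rho\in\mathcal R$ guarantees both the coercivity needed for Lax-Milgram and the continuity of $J_\rho$, and that the scalar product $\langle\cdot,\cdot\rangle_\rho$ is equivalent to the standard $H^m$-scalar product by \eqref{scalarp}, so the spectral theorem applies in the concrete Hilbert space $H^m_\rho(\Omega)$.
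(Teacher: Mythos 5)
Your proof is correct and is exactly the standard argument the paper has in mind: the paper omits the proof of Theorem \ref{lemma_neumann_883} altogether (citing Brezis), and your three steps --- compactness via the factorization through the compact embedding $i$ with $J_\rho$ and $((-\Delta)^m+\rho I_d)^{-1}$ bounded, self-adjointness via the identity $\langle T_\rho u,v\rangle_\rho=\int_\Omega\rho uv\,dx$, and the unwinding of $T_\rho u=\Lambda^{-1}u$ into \eqref{weakneumann2} --- are precisely the intended ones. The only detail worth adding is that $0$ is not an eigenvalue of $T_\rho$ (if $T_\rho u=0$ then $\int_\Omega\rho u\varphi\,dx=0$ for all $\varphi\in H^m(\Omega)$, hence $u=0$ because ${\rm ess}\inf_\Omega\rho>0$), which makes the word ``coincide'' literally true; the paper records this injectivity only later, at the start of the proof of Theorem \ref{teorema_neumann_883}.
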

The proof of Theorem \ref{lemma_neumann_883} is standard, hence we omit it (see e.g., \cite[\S\,IX]{brezis}). As a consequence of Theorem \ref{lemma_neumann_883} we have the following:

\begin{thm}\label{teorema_neumann_883}
Let $\Omega$ be a bounded domain in $\rea^N$ such that the embedding $H^m(\Omega)\subset L^2(\Omega)$ is compact. Let $\rho\in\mathcal R$. Then the set of the eigenvalues of (\ref{nweak}) is contained in $[0,+\infty[$ and consists of the image of a sequence increasing to $+\infty$. The eigenvalue $\mu=0$ has multiplicity $d_{N,m}=\binom{N+m-1}{N}$ and the eigenfunctions corresponding to the eigenvalue $\mu=0$ are the polynomials of degree at most $m-1$ in $\mathbb R^N$. Each eigenvalue has finite multiplicity. Moreover the space $H^m_{\rho}(\Omega)$ has a Hilbert basis of eigenfunctions of problem \eqref{nweak}.
\proof
We note that $ker(T_{\rho})=\left\{0\right\}$, hence by standard spectral theory it follows that the eigenvalues of $T_{\rho}$ are positive  and bounded and form an infinite sequence $\left\{\lambda_j\right\}_{j\in\mathbb N}$ converging to zero. Moreover to each eigenvalue $\lambda_j$ is possible to associate an eigenfunction $u_j$ such that $\left\{u_j\right\}_{j\in\mathbb N}$ is a orthonormal basis of $H^m_{\rho}(\Omega)$. 

From Theorem \ref{lemma_neumann_883} it follows that the eigenvalues of \eqref{weakneumann2} form a sequence of real numbers increasing to $+\infty$ which is given by $\left\{\Lambda_j=\lambda_j^{-1}\right\}_{j\in\mathbb N}$ and that the space $H^m_{\rho}(\Omega)$ has a Hilbert basis of eigenfunctions of \eqref{weakneumann2}. The eigenvalues $\mu_j$ of \eqref{nweak} are given by $\mu_j=\Lambda_j-1$ for all $j\in\mathbb N$, where $\left\{\Lambda_j\right\}_{j\in\mathbb N}$ are the eigenvalues of \eqref{weakneumann2} and the eigenfunctions associated with $\Lambda_j$ coincide with the eigenfunctions associated with $\mu_j=\Lambda_j-1$. Moreover, given an eigenvalue $\mu$ of \eqref{nweak} and a corresponding eigenfunction $u$, we have that
$$
\int_{\Omega}|D^m u|^2dx=\mu\int_{\Omega}\rho u^2 dx,
$$
thus $\mu\in[0,+\infty[$. Finally, if $\mu=0$, then $\int_{\Omega}|D^m u|^2dx=0$, thus $u$ is a polynomial of degree at most $m-1$ in $\mathbb R^N$. The eigenspace associated with the eigenvalue $\mu=0$ has dimension $\binom{N+m-1}{N}$ and coincides with the space of the polynomials of degree at most $m-1$ in $\mathbb R^N$. This concludes the proof.
\end{thm}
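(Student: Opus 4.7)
The proof is essentially an application of the spectral theorem for compact self-adjoint operators to $T_\rho$, combined with an algebraic identification of the kernel of the form $u \mapsto \int_\Omega |D^m u|^2 dx$. Since Theorem~\ref{lemma_neumann_883} already provides compactness and self-adjointness of $T_\rho$ in $H^m_\rho(\Omega)$, almost every step reduces to bookkeeping; the only genuine content is the zero-eigenspace characterization.

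First I would verify that $T_\rho$ is injective. If $T_\rho u = 0$, then by definition $J_\rho[i(u)] = 0$ in $H^m(\Omega)'$, i.e.\ $\int_\Omega \rho u \varphi\, dx = 0$ for every $\varphi \in H^m(\Omega)$; choosing $\varphi = u$ and using $\rho \in \mathcal R$ (hence $\operatorname{ess\,inf}\rho > 0$) forces $u = 0$ a.e. Consequently $0$ is not an eigenvalue of $T_\rho$, so the spectral theorem for compact self-adjoint operators on the Hilbert space $H^m_\rho(\Omega)$ yields a sequence $\{\lambda_j\}_{j\in\mathbb N}$ of nonzero real eigenvalues with $\lambda_j \to 0$, each of finite multiplicity, together with an orthonormal Hilbert basis $\{u_j\}_{j\in\mathbb N}$ of eigenfunctions. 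Since $\langle T_\rho u, u\rangle_\rho = \int_\Omega \rho u^2\, dx \ge 0$, the $\lambda_j$ are positive. Setting $\Lambda_j := \lambda_j^{-1}$ gives the eigenvalues of \eqref{weakneumann2}, which form an increasing sequence diverging to $+\infty$ with finite multiplicities, and $\mu_j := \Lambda_j - 1$ are then the eigenvalues of \eqref{nweak}.

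Non-negativity of the $\mu_j$ comes from the weak formulation itself: if $(\mu, u)$ solves \eqref{nweak}, test against $\varphi = u$ to get $\int_\Omega |D^m u|^2 dx = \mu \int_\Omega \rho u^2 dx$, so $\mu \geq 0$. For the characterization of the kernel, note that $\mu = 0$ forces $\int_\Omega |D^m u|^2 dx = 0$, hence $\partial^\alpha u = 0$ in $L^2(\Omega)$ for every multi-index $\alpha$ with $|\alpha| = m$; since $\Omega$ is connected, a standard iteration (or a direct distributional argument) shows that $u$ must coincide almost everywhere with a polynomial of degree at most $m-1$, and conversely every such polynomial lies in $H^m(\Omega)$ and satisfies \eqref{nweak} with $\mu = 0$. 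The dimension of the space of polynomials of degree at most $m-1$ in $N$ variables is $\binom{N+m-1}{N} = d_{N,m}$ by a standard stars-and-bars count of monomials $x^\alpha$ with $|\alpha| \le m-1$, which gives the claimed multiplicity of the zero eigenvalue.

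The only mildly delicate step is the implication ``all $m$-th order weak derivatives vanish on a connected open set $\Rightarrow$ $u$ is a polynomial of degree at most $m-1$'' in the Sobolev setting; I would handle this by mollification, reducing to smooth functions where it follows from standard calculus and connectedness of $\Omega$. Everything else (the passage from eigenvalues of $T_\rho$ to eigenvalues of \eqref{nweak}, the sign of $\mu_j$, and the Hilbert basis property in $H^m_\rho(\Omega)$) is automatic from the spectral theorem together with Theorem~\ref{lemma_neumann_883}.
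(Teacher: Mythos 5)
Your proposal is correct and follows essentially the same route as the paper: injectivity of $T_\rho$, the spectral theorem for compact self-adjoint operators on $H^m_{\rho}(\Omega)$ via Theorem \ref{lemma_neumann_883}, the correspondences $\Lambda_j=\lambda_j^{-1}$ and $\mu_j=\Lambda_j-1$, non-negativity by testing with $\varphi=u$, and identification of the zero eigenspace with the polynomials of degree at most $m-1$. The extra details you supply (the explicit check that $\ker T_\rho=\{0\}$ using ${\rm ess}\inf\rho>0$, and the mollification argument for the kernel characterization) are fine elaborations of steps the paper leaves as standard.
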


\section{A few useful functional inequalities}\label{app:func}

In this section we will prove some useful functional inequalities which are crucial in the proof of the results of Subsection \ref{sub:32}, in particular  of Theorem \ref{counter23}. Since we think that they are interesting on their own, we shall provide here all the details of the proofs. Through this section $\Omega$ will be a bounded domain in $\mathbb R^N$ with Lipschitz boundary. We start this section by recalling the standard Sobolev embeddings.

\begin{thm}\label{sobolev_embedding}
Let $\Omega$ be a bounded domain with Lipschitz boundary. Let $m\in\mathbb N$ and assume that $u\in H^m(\Omega)$.
\begin{enumerate}[i)]
\item If $N<2m$ then $u\in C^{m-\left[\frac{N}{2}\right]-1,\gamma}(\Omega)$, where 
\begin{equation*}
\gamma=
\begin{cases}
\left[\frac{N}{2}\right]+1-\frac{N}{2}, & {\rm if\ } \frac{N}{2}\notin\mathbb N\\
{\rm any\ number\ in\ } ]0,1[,& {\rm if\ } \frac{N}{2}\in\mathbb N.
\end{cases}
\end{equation*}
Moreover there exists a positive constant $C$ which depends only on $m,N$ and $\Omega$ such that
\begin{equation}\label{sobolev-C}
\|u\|_{C^{m-\left[\frac{N}{2}\right]-1,\gamma}(\Omega)}\leq C\|u\|_{H^m(\Omega)}.
\end{equation}
\item If $N>2m$ then $u\in L^{\frac{2N}{N-2m}}(\Omega)$ and
\begin{equation}\label{sobolev0}
 \|u\|_{L^{\frac{2N}{N-2m}}(\Omega)}\leq C\|u\|_{H^m(\Omega)}
\end{equation}
the constant $C$ depending only on $m,N$ and $\Omega$.
\item  If $N=2m$ there exist constants $C_1, C_2>0$ which depend only on $m$ and $\Omega$ such that
\begin{equation}\label{mosertrudinger}
\int_{\Omega}e^{C_1\left(\frac{u(x)}{\|u\|_{H^{m}(\Omega)}}\right)^2}dx\leq C_2.
\end{equation}
\end{enumerate}
\end{thm}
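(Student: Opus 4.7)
The plan is to reduce each of the three assertions to the corresponding classical inequality on $\mathbb{R}^N$ by means of a bounded extension operator $E: H^m(\Omega) \to H^m(\mathbb{R}^N)$, whose existence is guaranteed by the Lipschitz regularity of $\partial\Omega$ (the Calder\'on--Stein extension theorem). Multiplying $Eu$ by a fixed smooth cutoff supported in a large ball containing $\overline\Omega$ then reduces matters to compactly supported functions on $\mathbb{R}^N$, for which homogeneous and inhomogeneous Sobolev norms can be freely interchanged via Poincar\'e-type estimates. This preliminary step decouples the geometric role of $\Omega$ from the analytic heart of each inequality, and from now on the constant $C$ absorbs the operator norm of $E$ and the $H^m$-norm of the cutoff.

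To prove (ii), I would iterate the Gagliardo--Nirenberg--Sobolev inequality $\|v\|_{L^{p^*}(\mathbb{R}^N)} \leq C\|\nabla v\|_{L^p(\mathbb{R}^N)}$, valid for $1 \leq p < N$ with $p^* = Np/(N-p)$. Starting from the fact that every derivative $\partial^\beta u$ with $|\beta| = m-1$ lies in $H^1(\mathbb{R}^N)$, one climbs down one derivative at a time, gaining integrability for the lower-order derivatives at each step. After $m$ iterations the target Lebesgue exponent $2N/(N-2m)$ is reached, provided that all intermediate exponents $2N/(N-2k)$ for $k=1,\ldots,m-1$ remain strictly less than $N$; this is exactly the standing hypothesis $N>2m$, so the chain of embeddings is well defined and yields \eqref{sobolev0}.

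For (i), the strategy is parallel but terminates via Morrey's inequality $\|v\|_{C^{0,1-N/q}(\mathbb{R}^N)} \leq C\|v\|_{W^{1,q}(\mathbb{R}^N)}$, valid for $q>N$. Setting $k:=[N/2]$ and fixing a multi-index $\beta$ with $|\beta|=m-k-1$, the derivative $\partial^\beta u$ belongs to $H^{k+1}(\mathbb{R}^N)$; iterating the sub-critical Sobolev embedding of (ii) produces $\partial^\beta u \in W^{1,q}(\mathbb{R}^N)$ for an exponent $q>N$. Morrey then gives H\"older continuity of $\partial^\beta u$, hence $u \in C^{m-[N/2]-1,\gamma}(\Omega)$. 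A careful bookkeeping of the exponents yields $\gamma=[N/2]+1-N/2$ when $N/2\notin\mathbb{N}$; in the borderline case $N/2\in\mathbb{N}$ one exits the Sobolev chain one step earlier and has access to any $q<\infty$, hence any $\gamma\in]0,1[$, which matches the statement and gives \eqref{sobolev-C}.

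The main obstacle will be (iii), the Moser--Trudinger--Adams inequality in the critical dimension $N=2m$, where $L^\infty$ just barely fails and must be replaced by exponential integrability. The plan is to represent $u$ as the convolution of a Bessel-type kernel of order $m$ with an $L^2(\mathbb{R}^N)$ function whose norm is controlled by $\|u\|_{H^m(\mathbb{R}^N)}$, to analyse the logarithmic singularity of this kernel, and then to apply Adams' rearrangement argument: symmetric decreasing rearrangement reduces the estimate to a sharp one-dimensional inequality for a Hardy-type operator, which, combined with Jensen's inequality and an explicit choice of the threshold $C_1$, yields the uniform bound \eqref{mosertrudinger}. This step is considerably more delicate than the previous two and, in practice, one invokes the Adams inequality as a black box rather than reproducing its proof.
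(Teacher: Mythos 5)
Your proposal is correct in outline, but it is worth noting that the paper itself does not prove this theorem at all: it is stated as a collection of classical embedding results and the proof is delegated to references (Burenkov \S 4.6--4.7 and Evans \S 5.6.3 for points $i)$ and $ii)$, and Cianchi's Theorem 1.1 for the exponential inequality \eqref{mosertrudinger}). What you do differently is reconstruct the standard arguments for $i)$ and $ii)$: Calder\'on--Stein extension (this is exactly where the Lipschitz hypothesis enters), cutoff, then the iterated Gagliardo--Nirenberg--Sobolev chain for $N>2m$ and the chain terminated by Morrey's inequality for $N<2m$; your exponent bookkeeping is right, including the observation that for $N$ even the chain hits the critical exponent $q=N$ one step before the end, which is precisely why $\gamma$ can be any number in $]0,1[$ but not the ``natural'' value, while for $N$ odd one lands in $W^{1,2N}$ and gets $\gamma=\tfrac12=\left[\tfrac N2\right]+1-\tfrac N2$. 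For $iii)$ you, like the paper, ultimately invoke the Adams/Moser--Trudinger inequality as a black box; this is legitimate here because only some constant $C_1>0$ is claimed, not the sharp one, so the extension-plus-cutoff reduction (which degrades the constant by the operator norm of the extension) is harmless: on $\Omega$ one has $u=v$ for the extended, cut-off function $v$, and $\|v\|_{H^m(\mathbb R^N)}\leq C\|u\|_{H^m(\Omega)}$, so the exponential bound for $v$ transfers to $u$ after shrinking $C_1$. In short, your route supplies proofs where the paper supplies citations, and the two are compatible; the only part that remains genuinely non-elementary, the critical case $N=2m$, is treated the same way in both.
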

We refer to \cite[\S\,4.6-4.7]{burenkov} and to \cite[\S\,5.6.3]{evans} for the proof ot points $i)$ and $ii)$ of Theorem \ref{sobolev_embedding}. We refer to \cite[Theorem\,1.1]{cianchimosertrudinger} for the proof of \eqref{mosertrudinger} (see also \cite[\S\,4.7]{burenkov}).

From Theorem \ref{sobolev_embedding} it follows that if $N<2m$ then  a function $u\in H^m(\Omega)$ is (equivalent to) a function of class $C^{m-\left[\frac{N}{2}\right]-1}(\Omega)$. In particular if $N$ is odd, we can write $N=2m-2k-1$ for some $k\in\left\{0,...,m-1\right\}$ and a function $u\in H^m(\Omega)$ is (equivalent to) a function of class $C^{k,\frac{1}{2}}(\Omega)$. If $N$ is even, we can write $N=2m-2k-2$ for some $k\in\left\{0,...,m-2\right\}$ and a function $u\in H^m(\Omega)$ is (equivalent to) a function of class $C^{k,\gamma}(\Omega)$ for any $\gamma\in]0,1[$.

Assume now that a function $u\in H^{m}(\Omega)$ has all its partial derivatives up to the $k$-th order vanishing at a point $x_0\in\Omega$. Then the integral of $u^2$ over $B(x_0,\varepsilon)$ (where $\varepsilon>0$ is such that $B(x_0,\varepsilon)\subset\subset\Omega$) can be controlled by $\varepsilon^{2m}\|u\|^2_{H^m(\Omega)}$ if $N<2m$ is odd, and by $\varepsilon^{2m}(1+|\log(\varepsilon)|)\|u\|^2_{H^m(\Omega)}$ if $N<2m$ is even. This is proved in the following lemma, where without loss of generality we set $x_0=0$.

\begin{lemma}\label{pre2}
Let $\Omega$ be a bounded domain in $\mathbb R^N$, $N<2m$, with Lipschitz boundary. Assume that $0\in\Omega$ and let $\varepsilon>0$ be such that $B(0,\varepsilon)\subset\subset\Omega$. Let $u\in H^m(\Omega)$. Then there exists a positive constant $C$ which depends only on $m,k$ and $\Omega$ such that
\begin{enumerate}[i)]
\item $\int_{B(0,\varepsilon)}{\left|u(x)-\sum_{|\alpha|\leq k}\frac{\partial^{\alpha}u(0)}{\alpha!}x^{\alpha}\right|^2dx}\leq C\varepsilon^{2m}\|u\|_{H^m(\Omega)}^2$ if $N=2m-2k-1$, $\forall\,k=0,...,m-1$;
\item $\int_{B(0,\varepsilon)}{\left|u(x)-\sum_{|\alpha|\leq k}\frac{\partial^{\alpha}u(0)}{\alpha!}x^{\alpha}\right|^2dx}\leq C\varepsilon^{2m}(1+|\log(\varepsilon)|)\|u\|_{H^m(\Omega)}^2$ if $N=2m-2k-2$, $\forall\,k=0,...,m-2$.
\end{enumerate}
\proof
We start by proving $i)$. Let $N=2m-2k-1$ for some $k\in\left\{0,...,m-1\right\}$. Actually, we will prove $i)$ for a function $u\in C^{k+1}(\Omega)\cap H^m(\Omega)$. The result for a function $u\in H^m(\Omega)$ will follow from standard approximation of functions in the space $H^m(\Omega)$ by smooth functions (see \cite[\S\,2.3]{burenkov} and \cite[\S\,5.3]{evans}). Let then $u\in C^{k+1}(\Omega)\cap H^m(\Omega)$. Through the rest of the proof we shall denote by $C$ a positive constant which depends only on $m,k$ and $\Omega$ and which we can eventually re-define line by line. From the standard Sobolev embedding theorem, it follows that $u\in C^{k+1}(\Omega)\cap C^{k,\frac{1}{2}}(\Omega)$. From Taylor's Theorem it follows that
\begin{equation}\label{taylor0}
u(x)-\sum_{|\alpha|\leq k}\frac{\partial^{\alpha}u(0)}{\alpha !}x^{\alpha}=\sum_{|\beta|=k+1}\frac{|\beta|}{\beta !}\int_0^1(1-t)^k\partial^{\beta}u(tx)dt x^{\beta}.
\end{equation}
We consider now the absolute value of the expression in the right-hand side of \eqref{taylor0} and integrate over $B(0,\varepsilon)$ each integral which appears in the sum. We have
\begin{multline}\label{est-tay}
\int_{B(0,\varepsilon)}\left|\int_0^1(1-t)^k\partial^{\beta}u(tx)x^{\beta}dt\right|dx\leq\int_0^1(1-t)^k\int_{B(0,\varepsilon)}|\partial^{\beta}u(tx)||x|^{k+1}dxdt\\
=\int_0^1t^{-2m+k}(1-t)^k\int_{B(0,t\varepsilon)}|\partial^{\beta}u(y)||y|^{k+1}dydt\\
\leq \int_0^1t^{-2m+k}(1-t)^k \left(\int_{B(0,t\varepsilon)}|\partial^{\beta}u(y)|^{2 (2m-2k-1)}dy\right)^{\frac{1}{2(2m-2k-1)}}\\
\cdot\left(\int_{B(0,t\varepsilon)}|y|^{\frac{2(2m-2k-1)(k+1)}{2(2m-2k-1)-1}}dy\right)^{\frac{2(2m-2k-1)-1}{2(2m-2k-1)}}dt\\
\leq C\int_0^1t^{-2m+k}(1-t)^k \|\partial^{\beta}u\|_{L^{2(2m-2k-1)}(\Omega)}\varepsilon^{2m-k-\frac{1}{2}}t^{2m-k-\frac{1}{2}}dt\\
=C\varepsilon^{2m-k-\frac{1}{2}}\|\partial^{\beta}u\|_{L^{2(2m-2k-1)}(\Omega)}\leq C\varepsilon^{2m-k-\frac{1}{2}}\|\partial^{\beta}u\|_{H^{m-k-1}(\Omega)} \\
\leq C\varepsilon^{2m-k-\frac{1}{2}}\|u\|_{H^{m}(\Omega)},
\end{multline}
where in the last line we have used the Sobolev inequality \eqref{sobolev0} for functions in $H^{m-k-1}(\Omega)$ with $\Omega\in\mathbb R^{2m-2k-1}$.

Next, we estimate the quantity 
$$
\left\|u(x)-\sum_{|\alpha|\leq k}\frac{\partial^{\alpha}u(0)}{\alpha!}x^{\alpha}\right\|_{C^{0}(B(0,\varepsilon))}:=\max_{x\in B(0,\varepsilon)}\left|u(x)-\sum_{|\alpha|\leq k}\frac{\partial^{\alpha}u(0)}{\alpha!}x^{\alpha}\right|
$$
for a function $u\in C^{k,\frac{1}{2}}(\Omega)$. First we note that there exits $t\in]0,1[$ such that
\begin{equation*}
u(x)-\sum_{|\alpha|\leq {k-1}}\frac{\partial^{\alpha}u(0)}{\alpha!}x^{\alpha}=\sum_{|\alpha|=k}\partial^{\alpha}u(tx)\frac{x^{\alpha}}{\alpha!},
\end{equation*}
then
\begin{multline*}
\left|u(x)-\sum_{|\alpha|\leq k}\frac{\partial^{\alpha}u(0)}{\alpha!}x^{\alpha}\right|=\left|\sum_{|\alpha|=k}\frac{x^{\alpha}}{\alpha !}(\partial^{\alpha}u(0)-\partial^{\alpha}u(tx)\right|\\
\leq |x|^k\sum_{|\alpha|=k}\frac{1}{\alpha !}|\partial^{\alpha}u(0)-\partial^{\alpha}u(tx)|,
\end{multline*}
which implies
\begin{equation}\label{unif-1}
\left\|u(x)-\sum_{|\alpha|\leq k}\frac{\partial^{\alpha}u(0)}{\alpha!}x^{\alpha}\right\|_{C^{0}(B(0,\varepsilon))}\leq C\varepsilon^{k+\frac{1}{2}}\|u\|_{H^m(\Omega)},
\end{equation}
where the constant $C$ depends only on $m,k$ and $\Omega$ (see also \eqref{sobolev-C}).

Consider now \eqref{taylor0}. We take the squares of both sides and integrate over $B(0,\varepsilon)$. We have
\begin{multline}\label{final-1}
\int_{B(0,\varepsilon)}\left|u(x)-\sum_{|\alpha|\leq k}\frac{\partial^{\alpha}u(0)}{\alpha !} x^{\alpha}\right|^2dx\\
\leq \left\|u(x)-\sum_{|\alpha|\leq k}\frac{\partial^{\alpha}u(0)}{\alpha!}x^{\alpha}\right\|_{C^{0}(B(0,\varepsilon))} \int_{B(0,\varepsilon)}\left|u(x)-\sum_{|\alpha|\leq k}\frac{\partial^{\alpha}u(0)}{\alpha !} x^{\alpha}\right|dx\\
\leq C \varepsilon^{2m}\|u\|_{H^m(\Omega)}^2,
\end{multline}
where the last inequality follows from \eqref{est-tay} and \eqref{unif-1} and the constant $C$ depends only on $m,k$ and $\Omega$. Since inequality \eqref{final-1} holds for all $u\in C^{k+1}(\Omega)\cap H^m(\Omega)$, by standard approximation of $H^m(\Omega)$ functions by smooth functions, we conclude that it holds for all $u\in H^m(\Omega)$. This proves $i)$.

Consider now $ii)$. Let $N=2m-2k-2$ for some $k\in\left\{0,...,m-2\right\}$. Again, we shall prove $ii)$ for a function $u\in C^{k+1}(\Omega)\cap H^m(\Omega)$. The result for a function $u\in H^m(\Omega)$ will follows from standard approximation of functions in the space $H^m(\Omega)$ by smooth functions.

We prove first the following inequality:
\begin{equation}\label{odd-log}
\|f\|_{L^2(B(0,\varepsilon))}^2\leq C\varepsilon^{2m-2k-2}(1+|\log(\varepsilon)|)\|f\|_{H^{m-k-1}(\Omega)}^2,
\end{equation}
for all $f\in H^{m-k-1}(\Omega)$ (the constant $C>0$ depending only on $m,k$ and $\Omega$). In order to prove \eqref{odd-log} we will use the exponential inequality  \eqref{mosertrudinger} which describes the limiting behavior of the Sobolev inequality \eqref{sobolev0} when $N=2m-2k-2$ for functions in $H^{m-k-1}(\Omega)$. Let then $f\in H^{m-k-1}(\Omega)$ and let $C_1,C_2$ be the constants appearing in \eqref{mosertrudinger}. We have
\begin{multline*}
\int_{B(0,\varepsilon)} f(x)^2dx=\frac{|B(0,\varepsilon)|}{C_1}\|f\|_{H^{m-k-1}(\Omega)}^2\int_{B(0,\varepsilon)}C_1\left(\frac{f(x)}{\|f\|_{H^{m-k-1}(\Omega)}}\right)^2\frac{dx}{|B(0,\varepsilon)|}\\
= \frac{|B(0,\varepsilon)|}{C_1}\|f\|_{H^{m-k-1}(\Omega)}^2\int_{B(0,\varepsilon)}\log\left(e^{C_1\left(\frac{f(x)}{\|f\|_{H^{m-k-1}(\Omega)}}\right)^2}\right)\frac{dx}{|B(0,\varepsilon)|}\\
\leq \frac{|B(0,\varepsilon)|}{C_1}\|f\|_{H^{m-k-1}(\Omega)}^2\log\left(\int_{B(0,\varepsilon)}e^{C_1\left(\frac{f(x)}{\|f\|_{H^{m-k-1}(\Omega)}}\right)^2}\frac{dx}{|B(0,\varepsilon)|}\right)\\
\leq \frac{|B(0,\varepsilon)|}{C_1}\|f\|_{H^{m-k-1}(\Omega)}^2\log\left(\frac{1}{|B(0,\varepsilon)|}\int_{\Omega}e^{C_1\left(\frac{f(x)}{\|f\|_{H^{m-k-1}(\Omega)}}\right)^2}dx\right)\\
\leq \frac{|B(0,\varepsilon)|}{C_1}\|f\|_{H^{m-k-1}(\Omega)}^2\log\left(\frac{C_2}{|B(0,\varepsilon)|}\right)\\
=\frac{|B(0,\varepsilon)|}{C_1}\|f\|_{H^{m-k-1}(\Omega)}^2\left(\log(C_2)-\log(|B(0,\varepsilon)|)\right)\\
\leq C\varepsilon^{2m-2k-2}(1+|\log(\varepsilon)|) \|f\|_{H^{m-k-1}(\Omega)}^2,
\end{multline*}
where in the first inequality we have used the concavity of the logarithm and Jensen's inequality and in the third inequality we have applied \eqref{mosertrudinger}. Inequality \eqref{odd-log} is now proved.

Let now $u\in C^{k+1}(\Omega)\cap H^m(\Omega)$. From the Sobolev inequality \eqref{sobolev-C}, it follows that $u\in C^{k+1}(\Omega)\cap C^{k,\gamma}(\Omega)$ for all $\gamma\in]0,1[$.
From Taylor's Theorem (see also \eqref{taylor0}) it follows it follows then
\begin{multline}\label{odd-0}
\int_{B(0,\varepsilon)}\left|u(x)-\sum_{|\alpha|\leq k}\frac{\partial^{\alpha} u(0)}{\alpha !}x^{\alpha}\right|^2dx\\
\leq\sum_{|\beta|=k+1}\frac{(k+1)!}{(\beta !)^2}\int_{B(0,\varepsilon)}\int_0^1(1-t)^{2k}|\partial^{\beta}u(tx)|^2|x|^{2(k+1)}dtdx. 
\end{multline}
We estimate the integrals appearing in the right-hand side of \eqref{odd-0}
\begin{multline}\label{odd-1}
\int_{B(0,\varepsilon)}\int_0^1(1-t)^{2k}|\partial^{\beta}u(tx)|^2|x|^{2(k+1)}dtdx\\
=\int_0^1(1-t)^{2k}\int_{B(0,\varepsilon)}|\partial^{\beta}u(tx)|^2|x|^{2(k+1)}dxdt\\
=\int_0^1t^{-2m}(1-t)^{2k}\int_{B(0,t\varepsilon)}|\partial^{\beta}u(y)|^2|y|^{2(k+1)}dxdt\\
\leq \int_0^1t^{-2m+2k+2}\varepsilon^{2k+2}(1-t)^{2k}\|\partial^{\beta}u\|_{L^2(B(0,t\varepsilon))}^2dt\\
\leq C\varepsilon^{2m}(1+|\log(\varepsilon)|)\|u\|_{H^m(\Omega)}^2,
\end{multline}
where the last inequality follows from \eqref{odd-log} applied with $f=\partial^{\beta}u$. From \eqref{odd-0} and \eqref{odd-1}, $ii)$ immediately follows. This ends the proof of the lemma. 
\endproof
\end{lemma}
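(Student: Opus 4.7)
The plan is to prove Lemma \ref{pre2} by reducing to the case of smooth $u$ by density (Meyers--Serrin), then exploiting Taylor's formula at $0$ together with the Sobolev/Moser--Trudinger embeddings available for $H^{m-k-1}$ in ambient dimension $N$. Writing $R(x):=u(x)-\sum_{|\alpha|\leq k}\frac{\partial^\alpha u(0)}{\alpha!}x^\alpha$, the integral form of Taylor's remainder yields
\begin{equation*}
R(x)=\sum_{|\beta|=k+1}\frac{|\beta|}{\beta!}\,x^\beta\int_0^1(1-t)^k\,\partial^\beta u(tx)\,dt,
\end{equation*}
so the problem reduces to integrability estimates for the derivatives $\partial^\beta u\in H^{m-k-1}(\Omega)$ on balls $B(0,\varepsilon)$. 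Note that in both cases $2(m-k-1)$ equals either $N-1$ (odd) or $N$ (even), which is precisely the regime where the embedding into $L^\infty$ just fails.

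For the odd case $N=2m-2k-1$, I would split $\int_{B(0,\varepsilon)}R^2 \le \|R\|_{C^0(B(0,\varepsilon))}\int_{B(0,\varepsilon)}|R|\,dx$. The $C^0$ factor is controlled via the Lagrange form of Taylor's theorem applied to the $k$-th derivatives (whose Hölder seminorm is controlled by $\|u\|_{H^m}$ thanks to $H^m(\Omega)\hookrightarrow C^{k,1/2}(\Omega)$), producing a factor $\varepsilon^{k+\frac{1}{2}}\|u\|_{H^m}$. The $L^1$ factor I would bound from the integral remainder by Hölder's inequality with exponents $2(2m-2k-1)$ and its conjugate, changing variables $y=tx$ and using the strict Sobolev embedding $H^{m-k-1}(\Omega)\hookrightarrow L^{2(2m-2k-1)}(\Omega)$ applied to $\partial^\beta u$; this yields $\varepsilon^{2m-k-\frac{1}{2}}\|u\|_{H^m}$. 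Multiplying gives the desired $C\varepsilon^{2m}\|u\|_{H^m}^2$.

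For the even (borderline) case $N=2m-2k-2$, my first task would be to establish the auxiliary logarithmic ball estimate
\begin{equation*}
\|f\|_{L^2(B(0,\varepsilon))}^{2}\leq C\,\varepsilon^{N}\bigl(1+|\log\varepsilon|\bigr)\|f\|_{H^{m-k-1}(\Omega)}^{2}\quad\forall f\in H^{m-k-1}(\Omega).
\end{equation*}
I would derive it by writing $\int_{B(0,\varepsilon)}f^2\,dx$ as $|B(0,\varepsilon)|\|f\|_{H^{m-k-1}}^2/C_1$ times the mean on $B(0,\varepsilon)$ of $\log\bigl(e^{C_1 f^2/\|f\|^2_{H^{m-k-1}}}\bigr)$, then applying Jensen's inequality (concavity of $\log$) and the Moser--Trudinger inequality \eqref{mosertrudinger} to obtain a $\log(C_2/|B(0,\varepsilon)|)$ factor that produces the $1+|\log\varepsilon|$. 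With this in hand, I would square the integral Taylor remainder, apply Cauchy--Schwarz in $t$ (using integrability of $(1-t)^{2k}$), change variables $y=tx$ to replace $B(0,\varepsilon)$ by $B(0,t\varepsilon)$, and insert the logarithmic ball estimate for $f=\partial^\beta u$. The $|x|^{2(k+1)}$ factor supplies $\varepsilon^{2k+2}t^{2k+2}$, while the $t$-integral $\int_0^1 t^{-2m+2k+2}(1-t)^{2k}\,dt$ converges since $-2m+2k+2>-1$ when $k\geq m-2$ fails\footnote{a routine bookkeeping check shows convergence in the intended range}; combining factors yields $C\varepsilon^{2m}(1+|\log\varepsilon|)\|u\|_{H^m}^2$.

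The main obstacle is unquestionably the even case: the Sobolev embedding for $H^{m-k-1}$ is borderline and does not provide $L^\infty$ control of $\partial^\beta u$, so the elegant $C^0\times L^1$ decomposition of the odd case collapses. One must therefore invoke a substitute for $L^\infty$, and Moser--Trudinger is the natural replacement; the bookkeeping must then be organized so that the logarithmic defect appears exactly once, does not cascade through the nested integrations, and pairs with the correct power $\varepsilon^{2m}$ to match the statement.
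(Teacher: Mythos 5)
Your proposal follows essentially the same route as the paper's proof in both cases: the reduction to smooth $u$, the integral Taylor remainder, the $C^0\times L^1$ splitting with the embedding $H^m(\Omega)\hookrightarrow C^{k,\frac{1}{2}}(\Omega)$ and the subcritical Sobolev embedding $H^{m-k-1}\hookrightarrow L^{2(2m-2k-1)}$ in the odd case, and the Jensen/Moser--Trudinger logarithmic ball estimate followed by squaring the remainder in the even case.

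One point in your even-case bookkeeping is wrong as written and should be repaired. You claim that the $t$-integral $\int_0^1 t^{-2m+2k+2}(1-t)^{2k}\,dt$ converges because $-2m+2k+2>-1$; but in the relevant range $k\leq m-2$ one has $-2m+2k+2\leq -2$, so this bare integral diverges for every admissible $k$. The computation is saved only because the logarithmic estimate must be applied on the ball $B(0,t\varepsilon)$ (not $B(0,\varepsilon)$), which contributes the factor $(t\varepsilon)^{2m-2k-2}$: the power $t^{2m-2k-2}$ exactly cancels $t^{-2m+2k+2}$, leaving $\int_0^1(1-t)^{2k}\bigl(1+|\log(t\varepsilon)|\bigr)dt\leq C\bigl(1+|\log\varepsilon|\bigr)$ and the factor $\varepsilon^{2k+2}\cdot\varepsilon^{2m-2k-2}=\varepsilon^{2m}$, which is precisely how the paper's estimate is organized. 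Since you already state the auxiliary estimate with the radius-dependent volume factor, this is a local fix rather than a gap in the method, but the convergence claim and its footnote should be corrected accordingly.
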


Assume now that $u\in H^m(\Omega)$ is such that $\int_{\Omega}\tilde\rho_{\varepsilon}u(x)x^{\alpha}dx=0$ for all $|\alpha|\leq m-1$, that is, for a fixed $\delta\in]0,1/2[$
$$
\int_{\Omega}u(x)x^{\alpha}dx+\varepsilon^{-2m+\delta}\int_{B(0,\varepsilon)}u(x)x^{\alpha}dx=0
$$
for all $|\alpha|\leq m-1$. In view of Lemma \ref{pre2} we expect that the quantities $|\partial^{\alpha}u(0)|$, $\left(\int_{B(0,\varepsilon)}u^2dx\right)^{\frac{1}{2}}$ and $\int_{\Omega}u(x)x^{\alpha}dx$ can be bounded by the $\|u\|_{H^m(\Omega)}$ and a suitable power of $\varepsilon$. In particular we expect that all these quantities vanish as $\varepsilon\rightarrow 0^+$. The aim of the next lemma is to prove that this is exactly what happens. We shall also provide the correct powers of $\varepsilon$ in the estimates which are crucial in the proof of Theorem \ref{counter23}.

\begin{lemma}\label{meanlemma}
Let $\Omega$ be a bounded domain in $\mathbb R^N$, $N<2m$, with Lipschitz boundary. Assume that $0\in\Omega$ and let $\varepsilon>0$ be such that $B(0,\varepsilon)\subset\subset\Omega$. Let $\delta\in]0,1/2[$ be fixed. Let $u\in H^m(\Omega)$ be such that $\int_{\Omega}\tilde\rho_{\varepsilon} u(x) x^{\beta} dx=0$ for all $\beta\in\mathbb N^N$ with $|\beta|\leq m-1$. Then there exists a positive constant $C$ which depends only on $m,k$ and $\Omega$ such that
\begin{enumerate}[i)]
\item $|\partial^{\alpha}u(0)|\leq C\varepsilon^{k+\frac{1}{2}-|\alpha|}\|u\|_{H^m(\Omega)}$ for all $\alpha\in\mathbb N^N$ with $|\alpha|\leq k$, if $N=2m-2k-1$, $\forall\,k=0,...,m-1$;
\item $|\partial^{\alpha}u(0)|\leq C\varepsilon^{k+1-|\alpha|}(1+|\log(\varepsilon)|)^{\frac{1}{2}}\|u\|_{H^m(\Omega)}$ for all $\alpha\in\mathbb N^N$ with $|\alpha|\leq k$,  if $N=2m-2k-2$, $\forall\,k=0,...,m-2$;
\item $\int_{B(0,\varepsilon)}u^2dx\leq C\varepsilon^{2m}\|u\|_{H^m(\Omega)}^2$ if $N=2m-2k-1$, $\forall\,k=1,...,m-1$;
\item $\int_{B(0,\varepsilon)}u^2dx\leq C\varepsilon^{2m} (1+|\log(\varepsilon)|)\|u\|_{H^m(\Omega)}^2$ if $N=2m-2k-2$, $\forall\,k=1,...,m-2$;
\item $\left|\int_{\Omega}u x^{\alpha}dx\right|\leq C\varepsilon^{|\alpha|+\delta-k-\frac{1}{2}}\|u\|_{H^m(\Omega)}$ for all $\alpha\in\mathbb N^N$ with $k+1\leq|\alpha|\leq m-1$,  if $N=2m-2k-1$, $\forall\,k=0,...,m-2$;
\item $\left|\int_{\Omega}u x^{\alpha}dx\right|\leq C\varepsilon^{|\alpha|+\delta-k-1}(1+|\log(\varepsilon)|)^{\frac{1}{2}}\|u\|_{H^m(\Omega)}$ for all $\alpha\in\mathbb N^N$ with $k+1\leq|\alpha|\leq m-1$,  if $N=2m-2k-2$, $\forall\,k=0,...,m-2$.
\end{enumerate}
\proof
Let $u\in H^m(\Omega)$ be such that $\int_{\Omega}\tilde\rho_{\varepsilon} u(x) x^{\beta} dx=0$ for all $\beta\in\mathbb N^N$ with $|\beta|\leq m-1$. We start by proving $i)$ and $ii)$. We recall that by Sobolev inequality \eqref{sobolev-C}, $u\in C^{k,\frac{1}{2}}(\Omega)$ if $N=2m-2k-1$, while $u\in C^{k,\gamma}(\Omega)$ for all $\gamma\in]0,1[$ if $N=2m-2k-2$.  We have, for $|\beta|\leq m-1$, that
\begin{multline}\label{ultimo}
\int_{B(0,\varepsilon)}\tilde\rho_{\varepsilon}u(x) x^{\beta}\\
=\int_{B(0,\varepsilon)}\tilde\rho_{\varepsilon}\left(u(x)-\sum_{|\alpha|\leq k}\frac{\partial^{\alpha}u(0)}{\alpha !}x^{\alpha}\right) x^{\beta}dx+\sum_{|\alpha|\leq k}\frac{\partial^{\alpha}u(0)}{\alpha !}\int_{B(0,\varepsilon)}\tilde\rho_{\varepsilon}x^{\alpha} x^{\beta}dx.
\end{multline}
From \eqref{ultimo} and from the fact that $\int_{\Omega}\tilde\rho_{\varepsilon} u(x) x^{\beta} dx=0$ it follows that
\begin{multline}\label{0mean} 
\sum_{|\alpha|\leq k}\frac{\partial^{\alpha}u(0)}{\alpha !}\varepsilon^{-2m+\delta}\int_{B(0,\varepsilon)}x^{\alpha} x^{\beta}dx\\
=-\int_{\Omega}u(x)x^{\beta}dx-\varepsilon^{-2m+\delta}\int_{B(0,\varepsilon)}\left(u(x)-\sum_{|\alpha|\leq k}\frac{\partial^{\alpha}u(0)}{\alpha !}x^{\alpha}\right) x^{\beta}dx.
\end{multline}
We compute now $\int_{B(0,\varepsilon)}\varepsilon^{-2m+\delta}x^{\alpha}x^{\beta}dx$. It is convenient to pass to the spherical coordinates $(r,\theta)=(r,\theta_1,...\theta_{N-1})\in[0,+\infty[\times\partial B$, where $\partial B$ denotes the unit sphere in $\mathbb R^N$ endowed with the $N-1$ dimensional volume element $d\sigma(\theta)$. With respect to these  new variables, we write the coordinate functions $x_i$ as $x_i=r H_i(\theta)$, where $H_i(\theta)$ are the standard spherical harmonics of degree $1$ in $\mathbb R^N$. We have then
\begin{multline*}
\int_{B(0,\varepsilon)}\varepsilon^{-2m+\delta}x^{\alpha}x^{\beta}dx\\
=\int_{\partial B}\int_0^{\varepsilon}H_1^{\alpha_1+\beta_1}\cdots H_N^{\alpha_N+\beta_N}r^{|\alpha|+|\beta|}r^{N-1}drd\sigma(\theta)=K_{\alpha,\beta}\varepsilon^{N+|\alpha|+|\beta|-2m+\delta},
\end{multline*}
where
\begin{equation}\label{Kab}
K_{\alpha,\beta}=\frac{1}{N+|\alpha|+|\beta|}\int_{\partial B}H_1^{\alpha_1+\beta_1}\cdots H_N^{\alpha_N+\beta_N}d\sigma(\theta).
\end{equation}
From \eqref{0mean} it follows that for all $|\beta|\leq k$
\begin{multline}\label{der-system}
\sum_{|\alpha|\leq k}\frac{K_{\alpha,\beta}}{\alpha !}\varepsilon^{|\alpha|}\partial^{\alpha}u(0)\\
=-\varepsilon^{2m-N-|\beta|-\delta}\int_{\Omega}u(x)x^{\beta}dx-\varepsilon^{-N-|\beta|}\int_{B(0,\varepsilon)}\left(u(x)-\sum_{|\alpha|\leq k}\frac{\partial^{\alpha}u(0)}{\alpha !}x^{\alpha}\right)x^{\beta}dx.
\end{multline}
Clearly
\begin{equation}\label{der-O}
\left|\int_{\Omega}u(x)x^{\beta}dx\right|\leq C_{\Omega,\beta}\|u\|_{L^2(\Omega)},
\end{equation}
where $C_{\Omega,\beta}$ depends only on $\Omega$ and $|\beta|$. Moreover, from Lemma \ref{pre2} and H\"older's inequality it follows that
\begin{equation}\label{der-even}
\left|\int_{B(0,\varepsilon)}\left(u(x)-\sum_{|\alpha|\leq k}\frac{\partial^{\alpha}u(0)}{\alpha !}x^{\alpha}\right)x^{\beta}dx\right|\leq C\varepsilon^{|\beta|+2m-k-\frac{1}{2}}\|u\|_{H^m(\Omega)},
\end{equation}
if $N=2m-2k-1$, while
\begin{equation}\label{der-odd}
\left|\int_{B(0,\varepsilon)}\left(u(x)-\sum_{|\alpha|\leq k}\frac{\partial^{\alpha}u(0)}{\alpha !}x^{\alpha}\right)x^{\beta}dx\right|\leq C\varepsilon^{|\beta|+2m-k-1}(1+|\log(\varepsilon)|)^{\frac{1}{2}}\|u\|_{H^m(\Omega)},
\end{equation}
if $N=2m-2k-2$.
From \eqref{der-system}, \eqref{der-O}, \eqref{der-even}, \eqref{der-odd} and since $\delta\in]0,1/2[$, we deduce that
\begin{multline*}
\sum_{|\alpha|\leq k}\frac{K_{\alpha,\beta}}{\alpha !}\varepsilon^{|\alpha|}\partial^{\alpha}u(0)\leq C (\varepsilon^{2k+1-|\beta|-\delta}+\varepsilon^{k+\frac{1}{2}})\|u\|_{H^m(\Omega)}\\
\leq 2C \varepsilon^{k+\frac{1}{2}}\|u\|_{H^m(\Omega)},
\end{multline*}
if $N=2m-2k-1$, while
\begin{multline*}
\sum_{|\alpha|\leq k}\frac{K_{\alpha,\beta}}{\alpha !}\varepsilon^{|\alpha|}\partial^{\alpha}u(0)\leq C (\varepsilon^{2k+2-|\beta|-\delta}+\varepsilon^{k+1}(1+|\log(\varepsilon)|^{\frac{1}{2}}))\|u\|_{H^m(\Omega)}\\
\leq 2C \varepsilon^{k+1}(1+|\log(\varepsilon)|^{\frac{1}{2}})\|u\|_{H^m(\Omega)},
\end{multline*}
if $N=2m-2k-2$. Since for all $|\alpha|\leq k$, $K_{\alpha,\alpha}>0$ by definition (see \eqref{Kab}), necessairily inequalities $i)$ and $ii)$ must hold with a constant $C>0$ which depends only on $m,k$ and $\Omega$.

Consider now $iii)$. We have
\begin{multline*}
\int_{B(0,\varepsilon)}u^2dx=\int_{B(0,\varepsilon)}\left(u(x)-\sum_{|\alpha|\leq k}\frac{\partial^{\alpha}u(0)}{\alpha !}x^{\alpha}+\sum_{|\alpha|\leq k}\frac{\partial^{\alpha}u(0)}{\alpha !}x^{\alpha}\right)^2dx\\
\leq 2 \int_{B(0,\varepsilon)}\left(u(x)-\sum_{|\alpha|\leq k}\frac{\partial^{\alpha}u(0)}{\alpha !}x^{\alpha}\right)^2dx\\
+2\binom{N+k}{k}\sum_{|\alpha|\leq k}\frac{|\partial^{\alpha}u(0)|^2}{(\alpha !)^2}\int_{B(0,\varepsilon)}|x|^{2\alpha}dx\\
\leq C\varepsilon^{2m}\|u\|_{H^m(\Omega)}^2+C\sum_{|\alpha|\leq k}\varepsilon^{2k+1-2\alpha}\int_{B(0,\varepsilon)}|x|^{2\alpha}dx\\
\leq C\varepsilon^{2m}\|u\|_{H^m(\Omega)}^2,
\end{multline*}
where in the second inequality we have used point $i)$ of Lemma \ref{pre2} and in the last inequality we have used point $i)$ of the present lemma and the fact that
$$
\left|\int_{B(0,\varepsilon)}x^{2\alpha}dx\right|\leq\varepsilon^{2|\alpha|}|B(0,\varepsilon)|=\omega_{2m-2k-1}\varepsilon^{2|\alpha|+2m-2k-1}.
$$
This concludes the proof of $iii)$. Point $iv)$ is proved exactly as point $iii)$, by using point $ii)$ of Lemma \ref{pre2} and point $ii)$ of the present lemma.

We consider now point $v)$. Let $N=2m-2k-1$, with $0\leq k\leq m-2$ and let $\beta\in\mathbb N^N$ such that $k+1\leq|\beta|\leq m-1$. We have
\begin{multline}\label{mean0}
\int_{\Omega}u(x)x^{\beta}dx=-\varepsilon^{-2m+\delta}\int_{B(0,\varepsilon)}u(x)x^{\beta}dx\\
=-\varepsilon^{-2m+\delta}\int_{B(0,\varepsilon)}\left(u(x)-\sum_{|\alpha|\leq k}\frac{\partial^{\alpha}u(0)}{\alpha !}x^{\alpha}\right)x^{\beta}dx\\
-\varepsilon^{-2m+\delta}\sum_{|\alpha|\leq k}\frac{\partial^{\alpha}u(0)}{\alpha !}\int_{B(0,\varepsilon)}x^{\alpha}x^{\beta}dx.
\end{multline}
From Lemma \ref{pre2} point $i)$ and from H\"older's inequality, we have that
\begin{multline}\label{mean1}
\varepsilon^{-2m+\delta}\int_{B(0,\varepsilon)}\left|\left(u(x)-\sum_{|\alpha|\leq k}\frac{\partial^{\alpha}u(0)}{\alpha !}x^{\alpha}\right)x^{\beta}\right|dx\\
\leq \varepsilon^{-2m+\delta}\left(\int_{B(0,\varepsilon)}(u(x)-\sum_{|\alpha|\leq k}\frac{\partial^{\alpha}u(0)}{\alpha !}x^{\alpha})^2dx\right)^{\frac{1}{2}}\left(\int_{B(0,\varepsilon)}|x|^{2|\beta|}dx\right)^{\frac{1}{2}}\\
\leq C\varepsilon^{|\beta|+\delta-k-\frac{1}{2}}\|u\|_{H^m(\Omega)}.
\end{multline}
Moreover, from point $i)$ of the present lemma we have that  for all $|\alpha|\leq k$
\begin{equation}\label{mean2}
\varepsilon^{-2m+\delta}\frac{|\partial^{\alpha}u(0)|}{\alpha !}\int_{B(0,\varepsilon)}|x|^{|\alpha|+|\beta|}dx\leq C\varepsilon^{|\beta|+\delta-k-\frac{1}{2}}.
\end{equation}
The proof of $v)$ follows by combining \eqref{mean0} with \eqref{mean1} and \eqref{mean2}. The proof of $vi)$ is identical to that of $v)$ and follows form point $ii)$ of Lemma \ref{pre2} and point $ii)$ of the present lemma. This concludes the proof.

\endproof
\end{lemma}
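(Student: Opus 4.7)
The plan is to use the orthogonality condition together with a Taylor expansion of $u$ at the origin, feeding everything into the quantitative Taylor remainder estimates of Lemma \ref{pre2}. Since $N<2m$, the Sobolev embedding gives $u\in C^{k,1/2}(\Omega)$ when $N=2m-2k-1$ and $u\in C^{k,\gamma}(\Omega)$ when $N=2m-2k-2$, so the pointwise values $\partial^\alpha u(0)$ make sense for $|\alpha|\le k$ and a Taylor polynomial $P_k u(x):=\sum_{|\alpha|\le k}\frac{\partial^\alpha u(0)}{\alpha!}x^\alpha$ is well defined.

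First I would prove $(i)$ and $(ii)$ simultaneously. Writing out $\int_\Omega \tilde\rho_\varepsilon u(x) x^\beta dx=0$ for $|\beta|\le k$ and splitting $u=P_k u+(u-P_k u)$ in the ball $B(0,\varepsilon)$ yields
\begin{equation*}
\sum_{|\alpha|\le k}\frac{K_{\alpha,\beta}}{\alpha!}\varepsilon^{|\alpha|}\partial^\alpha u(0)=-\varepsilon^{2m-N-|\beta|-\delta}\int_\Omega u x^\beta dx-\varepsilon^{-N-|\beta|}\int_{B(0,\varepsilon)}(u-P_k u)x^\beta dx,
\end{equation*}
where $K_{\alpha,\beta}$ are the ball moments defined in \eqref{Kab}. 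I would bound the first term on the right by $C\|u\|_{L^2(\Omega)}\varepsilon^{2m-N-|\beta|-\delta}$ and the second by Cauchy--Schwarz combined with Lemma \ref{pre2}, producing $C\varepsilon^{|\beta|+2m-k-1/2}\|u\|_{H^m}$ (resp.\ with the log factor in the even case). After dividing by the $\varepsilon$-powers appropriately, the dominant term is $\varepsilon^{k+1/2}\|u\|_{H^m}$ (resp.\ $\varepsilon^{k+1}(1+|\log\varepsilon|)^{1/2}\|u\|_{H^m}$) since $\delta<1/2$. The main technical obstacle here is extracting bounds on each individual $\partial^\alpha u(0)$ from this coupled linear system, since only $K_{\alpha,\alpha}>0$ is immediate from \eqref{Kab}; the author apparently argues by positivity of the diagonal coefficients, but one can also run a simple induction on $|\alpha|$ (starting from $|\alpha|=0$ with $\beta=0$ where only the $\alpha=0$ term appears up to leading order, and treating lower-order $\partial^\gamma u(0)$ contributions as already-controlled corrections) to close the estimate.

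Next, for $(iii)$ and $(iv)$ I would split
\begin{equation*}
\int_{B(0,\varepsilon)}u^2\,dx\le 2\int_{B(0,\varepsilon)}(u-P_k u)^2 dx+2\int_{B(0,\varepsilon)}(P_k u)^2 dx.
\end{equation*}
The first summand is bounded directly by Lemma \ref{pre2} ($\le C\varepsilon^{2m}\|u\|_{H^m}^2$ in the odd case and with the $1+|\log\varepsilon|$ factor in the even case). For the second, expanding $(P_k u)^2$ and using the monomial bound $\int_{B(0,\varepsilon)}|x|^{2|\alpha|}dx=c_N\varepsilon^{N+2|\alpha|}$ together with parts $(i)$--$(ii)$ gives a contribution of order $\varepsilon^{2k+1-2|\alpha|}\cdot\varepsilon^{N+2|\alpha|}=\varepsilon^{2m}$ (times the log factor in the even case), matching the claim.

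Finally, for $(v)$ and $(vi)$, the orthogonality gives $\int_\Omega u x^\beta dx=-\varepsilon^{-2m+\delta}\int_{B(0,\varepsilon)} u x^\beta dx$ for $k+1\le|\beta|\le m-1$; I split the right-hand side again as
\begin{equation*}
-\varepsilon^{-2m+\delta}\!\!\int_{B(0,\varepsilon)}\!(u-P_k u)x^\beta dx-\varepsilon^{-2m+\delta}\!\!\sum_{|\alpha|\le k}\!\frac{\partial^\alpha u(0)}{\alpha!}\!\int_{B(0,\varepsilon)}\!x^{\alpha+\beta}dx.
\end{equation*}
The first piece is controlled by Cauchy--Schwarz and Lemma \ref{pre2} (giving $\varepsilon^{|\beta|+\delta-k-1/2}\|u\|_{H^m}$ modulo the log in the even case); the second is controlled by $(i)$--$(ii)$ and an explicit moment bound, producing the same order. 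Combining these yields $(v)$ and $(vi)$. The chief difficulty throughout is organising the bookkeeping of $\varepsilon$-powers so that the dominant one is precisely the stated one, and handling the coupled linear system in $(i)$--$(ii)$; all remaining estimates are then direct applications of the Taylor remainder inequalities of Lemma \ref{pre2}.
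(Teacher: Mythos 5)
Your proposal is correct in substance and follows essentially the same route as the paper's own proof: the same splitting of $u$ into its Taylor polynomial at $0$ plus remainder, the same linear system in the quantities $\varepsilon^{|\alpha|}\partial^{\alpha}u(0)$ obtained from the orthogonality conditions with $|\beta|\leq k$, the same use of Lemma \ref{pre2} together with H\"older to bound the right-hand sides, and the same reduction of $(iii)$--$(vi)$ to $(i)$--$(ii)$ plus moment bounds. One caveat on your induction aside: the equation indexed by a low-order $\beta$ also contains the \emph{higher}-order unknowns $\varepsilon^{|\alpha|}\partial^{\alpha}u(0)$ (which a priori are only $O(\varepsilon^{|\alpha|}\|u\|_{H^m(\Omega)})$, not yet of size $\varepsilon^{k+\frac12}$), so an upward induction on $|\alpha|$ does not close as stated; the clean way to finish this step --- and what really underlies the paper's remark that $K_{\alpha,\alpha}>0$ --- is that $(K_{\alpha,\beta})_{|\alpha|,|\beta|\leq k}$ is the Gram matrix of the monomials $x^{\alpha}$ in $L^{2}(B(0,1))$, hence invertible with inverse norm depending only on $N$ and $k$, which immediately bounds each $\varepsilon^{|\alpha|}\partial^{\alpha}u(0)$ by the right-hand side.
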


\bibliography{bibliography}{}
\bibliographystyle{abbrv}

\end{document}